\let\csname ver@amsthm.sty\endcsname\relax
	\let\theoremstyle\relax
\newtheorem{theorem}{Theorem}[section]
\newtheorem{thm}[theorem]{Theorem}
\newtheorem{proposition}[theorem]{Proposition}
\newtheorem{prop}[theorem]{Proposition}
\newtheorem{notation}[theorem]{Notation}
\newtheorem{criterion}[theorem]{Criterion}
\newtheorem{cor}[theorem]{Corollary}
\newtheorem{questions}[theorem]{Questions}
\newtheorem{fact}[theorem]{Fact} 
\newtheorem{claim}[theorem]{Claim}
\theoremstyle{definition}
\newtheorem{definition}[theorem]{Definition}
\newtheorem{example}[theorem]{Example}
\newtheorem{remark}[theorem]{Remark}
\newtheorem{lemma}[theorem]{Lemma}
\newcommand{\NN}{{\mathbb{N}}}
\newcommand{\RR}{{\mathbb{R}}}
\newcommand{\QQ}{{\mathbb{Q}}}
\newcommand{\ZZ}{{\mathbb{Z}}}
\newcommand{\FF}{{\mathbb{F}}}
\newcommand{\sub}{\subseteq}
\newcommand{\sN}[1]{_{#1\in \NN}}
\newcommand{\sNp}[1]{_{#1\in \NN^+}}
\newcommand{\uhr}[1]{\! \upharpoonright_{#1}}
\newcommand{\bi}{\begin{itemize}}
\newcommand{\ei}{\end{itemize}}
\newcommand{\bc}{\begin{center}}
\newcommand{\ec}{\end{center}}
\newcommand{\ES}{\emptyset}
\newcommand{\tp}[1]{2^{#1}}
\newcommand{\ex}{\exists}
\newcommand{\fa}{\forall}
\newcommand{\la}{\langle}
\newcommand{\ra}{\rangle}
\newcommand{\n}{\noindent}
\newcommand{\sss}{\sigma}
\newcommand{\aaa}{\alpha}
\renewcommand{\S}{S_\infty}
\newcommand{\lland}{\, \land \, }
\newcommand{\Tree}[1]{\mathit{Tree}(#1) }
\newcommand \seq[1]{{\langle{#1}\rangle}}
\newcommand\+[1]{\mathcal{#1}}
\newcommand{\wt}{\widetilde}
\newcommand{\ol}{\overline}
\newcommand{\ul}{\underline}
\newcommand{\ape}{\hat{\ }}
\renewcommand{\ape}{}
\newcommand{\lra}{\leftrightarrow}
\newcommand{\LR}{\Leftrightarrow}
\newcommand{\RA}{\Rightarrow}
\newcommand{\LA}{\Leftarrow}
\newcommand{\rapf}{\n ``$\RA$":\ }
\newcommand{\lapf}{\n ``$\LA$":\ }
\newcommand{\sssl}{\ensuremath{|\sigma|}}
\newcommand{\range}{\ensuremath{\mathrm{range}}}
\newcommand{\dom}{\ensuremath{\mathrm{dom}}}
\DeclareMathOperator \SL{SL}
\DeclareMathOperator \GL{GL}
\DeclareMathOperator \PGL{PGL}
\renewcommand \hat \widehat
\newcommand{\Op}{\text{\it Mult}}
\newcommand{\Inv}{\text{\it Inv}}
\newcommand{\Aut}{\text{\it Aut}}
\renewcommand{\epsilon}{\varepsilon}
\newcommand{\andre}[1]{{\textcolor{red}{O: #1}} }
\newcommand{\sasha}[1]{{\textcolor{blue}{H: #1}} }
\renewcommand{\andre}[1]{{\textcolor{red}{}} }
\renewcommand{\sasha}[1]{{\textcolor{blue}{}} }
\begin{document}

\title{Computably  totally disconnected   locally~compact groups}

    \author{Alexander Melnikov  \orcidlink{0000-0001-6878-7493}}
    \address{School of Mathematics and Statistics, \\
Victoria University of Wellington, New Zealand}
\email{sasha.melnikov@vuw.ac.nz} 
    
      \author{Andre Nies  \orcidlink{0000-0002-0666-5180}}
          \address{School  of Computer Science, \\
The University of Auckland, New Zealand}
\email{andre@cs.auckland.ac.nz}

\subjclass{03D80,22D05}
\maketitle

\begin{abstract}   We study  totally disconnected, locally compact (t.d.l.c.) groups from an algorithmic perspective. We give various approaches to defining computable presentability of a   t.d.l.c.\ group, and   show their equivalence. In the process, we  obtain an algorithmic Stone-type duality between t.d.l.c.~groups and certain countable ordered groupoids given by the compact open cosets. Several natural groups, such as $\mathrm{Aut}(T_d)$ and $\mathrm{SL}_n(\mathbb Q_p)$, have computable presentations. We provide  a criterion  based on the duality when a  computable presentation of a t.d.l.c.~group is unique up to computable isomorphism.   We show that many constructions leading from t.d.l.c.\ groups to new t.d.l.c.\ groups have algorithmic versions that stay within the class of computably presented t.d.l.c.\ groups; most prominently, quotients by computable closed normal subgroups.
 We study whether objects associated with computably t.d.l.c.\ groups are computable: the modular function, the scale function, and Cayley-Abels graphs in the compactly generated case.  
%
 \end{abstract}
 \setcounter{tocdepth} 1
\tableofcontents
\section{Introduction}
In the past 10 years, research  on  totally disconnected, locally compact  (t.d.l.c.) groups such as Willis~\cite{Willis:17} has increasingly focussed on their algorithmic aspects.    We  develop  a general  algorithmic theory of  t.d.l.c.\  groups. This   establishes  a   theoretical framework, and also  enables us  to prove non-computability results, such as  the existence  of an algorithmically t.d.l.c.\ group with a non-computable scale function.  

    In this paper we assume that  all topological groups  have a countable basis.  
       Our theory will  address the following:

 \begin{questions} \label{qu:quest}  \  \nolinebreak \nopagebreak
{\rm  \begin{enumerate}[label=(\alph*)] \item How can one define a computable presentation of  a t.d.l.c.\ group?     
 
\item  Which  t.d.l.c.\ groups have such a presentation?

   \item  Given a computable  presentation, which associated  objects  (such as the rational valued Haar measures) are   computable?

%
\item Which    constructions that lead from given  to new t.d.l.c.\ groups have algorithmic versions?
    \item When is      a computable presentation of a t.d.l.c.\ group unique up to computable isomorphism?

 \end{enumerate} }
 \end{questions}

%

We begin with  an informal  summary of our main results, and the extent to which they answer these questions.   Then we   proceed to some background on t.d.l.c.\  groups and on computability. We will  discuss  the five questions and results answering them  in more detail   in the corresponding  Subsections~\ref{s: types of presentations}--\ref{ss:auto} of the introduction. 
\subsection{Informal summary of answers to the questions}  \label{s:backgroundQ} \

\n (a)  We will introduce   two approaches to   defining a computable presentation of a t.d.l.c.\ group. Each of them addresses, in   different ways,  the problem that the domain is generally uncountable.  One approach, which we call  \emph{Baire presentation}, employs computation  based on approximations,  as is done in   the field of   computable analysis. The other   approach uses  a duality between t.d.l.c.\ groups and certain countable structures  we call \emph{meet groupoids}. We will show that the two approaches are equivalent,  in the sense that a group has a computable Baire presentation if, and only if,  it has a computable presentation via its meet groupoid. This duality is a core technique towards answering the remaining questions. 
 It  also shows that Baire presentations can be ``enhanced" to presentations as closed subgroups of $\S$: then  the group operations are canonically computable, because they are given by the   operations of composition and inverse on permutations of the natural numbers. 
 
\n   (b) Our thesis   is  that \emph{all} ``natural" countably based  groups that are considered in the field of   t.d.l.c.\ groups have computable  presentations.  We use our methods to collect evidence for this thesis. For instance, we   show that the groups of the form   $\mathrm {PGL}_n(\mathbb Q_p)$ are computable.

\n  (c)   Haar measure, modular function, and Cayley-Abels graphs (for the compactly generated case) are computable, while the scale function can fail to be computable.

\n  (d) Many    constructions of this sort   have algorithmic versions. This is  expected for constructions such as taking closed subgroups  and forming   local  direct products. It is   more surprising for taking    quotients by   closed normal subgroups, given that in the uncountable case there is no canonical    set of coset  representatives. The full power of the duality will be  needed.

\n  (e) While we stop short of giving a   full answer to the question, we  provide  a criterion,  based on the duality, of when a  computable presentation of a t.d.l.c.~group is unique up to computable isomorphism.  We apply this to show that the split extension of $\mathbb Q_p$  by $\ZZ$ with the action  of a generator given by $x \mapsto xp$ has a unique computable presentation.

  We hope that the first five  sections of the paper will  be  accessible to readers with only   basic knowledge of   computability theory; as we progress, we will explain some     notions from computability  theory  that are  more advanced.
  
 Future work  could use   our  framework to address the resource-bounded level, and even the practice of computation in t.d.l.c.\ groups.   Several recent works study these topics for particular groups. We mention Matthew Conder~\cite[4.1]{Conder:20}: given a non-Archimedean local field  $K$ such as $\QQ_p$,  an algorithm is described to determine whether two input elements $x,y$ of $\mathrm {SL}_2(K)$ (or $\mathrm {PSL}_2(K)$) generate a discrete free subgroup. Conder subsequently discusses  an implementation in the computer algebra system MAGMA. Such systems can only work with approximations to the elements in the uncountable domain.  Thus,    theoretical underpinning is needed for   claims that such algorithms run efficiently.  Future work could also explore the possibility to directly compute with the countable dual structure in computer algebra systems.

\subsection{Background on t.d.l.c.\ groups}  \label{s:background}
  Van Dantzig~\cite{Dantzig:36}   showed that each t.d.l.c.\ group has a neighbourhood basis of the identity consisting of compact open subgroups.  
With Question~\ref{qu:quest}(a) in mind,   we discuss  some   examples of t.d.l.c.\ groups.  We will return to them  repeatedly during the course of   the   paper.

\n  (i)  All countable discrete groups  and all profinite   groups are t.d.l.c.
 
\n  (ii) $  ( \mathbb Q_p,+)$,  the additive group of   $p$-adic numbers   for a prime~$p$,  is a t.d.l.c.\  group    in neither of the two classes above.
   
\n (iii) The    semidirect product $\ZZ \ltimes  \QQ_p$  corresponding to the  automorphism  $x \mapsto px$ on $\QQ_p$ is t.d.l.c.  
 
 \n  (iv) Algebraic groups over local fields, such as $\SL_n(\QQ_p)$ for $n \ge 2$, are t.d.l.c.

\n (v) Given a connected countable undirected graph such that each vertex has finite degree, its   automorphism group is t.d.l.c. The stabiliser of any   vertex forms a compact open subgroup. 
By an  \emph{undirected  tree} we mean  a  connected graph without  cycles.   For $d \ge 3$, by   $T_d$   one denotes the undirected  tree  where each vertex has degree $d$. The   group $\mathrm{Aut}(T_d)$ was    first studied by Tits~\cite{Tits:70}.

Towards Question~\ref{qu:quest}(b), we  will  review some    objects  that are associated with a locally compact group $G$.  

\medskip
\n  \emph{Modular function.} The left and right Haar measures on $G$ are treated in standard textbooks such as~\cite{Hewitt.Ross:12}.  Recall that for any left  Haar measure $\mu $ on $G$ and any $g \in G$, one obtains a further  left  Haar measure $\mu_g$ by  defining $\mu_g(A) = \mu(Ag)$. By the  uniqueness up to a multiplicative constant of   the left Haar measure, there is a real $\Delta(g)>0 $ such that $\mu_g(A) = \Delta(g) \mu(A)$ for each measurable $A$.  The function $\Delta \colon G \to \RR^+$,  called the  {modular function} for $G$,   is a group homomorphism.

 \medskip

\n  \emph{Scale function}. Willis~\cite{Willis:94} introduced the   scale function $s \colon G \to \NN^+$.  Let $g\in G$. For a compact open subgroup~$V$ of $G$, let $m(g,V) = |gVg^{-1}\colon V \cap gVg^{-1}| $. Let  $s(g)$  be  the minimum value of $m(g,V)$ over all $V$. It is not hard to show that $\Delta(g) = s(g)/s(g^{-1})$.      If a group has  a  \emph{normal} compact open subgroup,  as for  Examples (i)--(iii) above,   the scale function is constant of value 1. The group $\ZZ \ltimes \QQ_p $  for a prime $p$ is  among  the simplest examples of a t.d.l.c.\ group with a nontrivial scale function:     $s(t)=p$ for the   generator    $t$    of $\ZZ$ such that $ t \aaa t^{-1} =  \aaa/p$ for  each $\aaa \in \QQ_p$.

  \medskip
  
\n \emph{Cayley-Abels graphs.} We first  discuss a guiding principle in the study of a t.d.l.c.\ group $G$:  it has a  topological aspect that captures the small-scale (or local) behaviour, and a geometric aspect that captures the large-scale (or global) behaviour. The small-scale aspect  is given by a compact open subgroup $U$ (provided by van Dantzig's theorem). Note that any two such subgroups   are commensurable.  
  
For the large-scale aspect,   recall that  in the discrete setting,  Gromov and others  initiated a geometric theory of   finitely generated groups via their  Cayley graphs.  Given two generating sets, the corresponding  graphs are  quasi-isometric: there is a map $\phi $ from one vertex set to the other that only distorts distances affinely, and for some constant $c$, each vertex has  at most   distance $c$ from the range of $\phi$.
  For a  t.d.l.c.\ group $G$,  the generalisation of being f.g.\   is  being  algebraically generated by a compact subset.    Given     such a  group $G$ and a compact open subgroup $U$, note that   $G$ is algebraically generated by $U \cup S$ for some finite   set $S$ closed under inversion.  The Cayley-Abels graph $\Gamma_{U,S}$ (see, e.g., Kr\"on and M\"oller  \cite[Definition~3]{Kron.Moller:08})   is a graph on  the left cosets of $U$ generalising the Cayley-graph for discrete $G$ (where one can take $U= \{ e\}$). As in the discrete  case, any two such graphs  for the same group are quasi-isometric.
   So, one can think of any  Cayley-Abels graph as capturing  the large-scale aspect of a compactly generated group.

  Wesolek \cite{Wesolek:18}  provides   further  background and references on the  topics discussed above.   
 Also see   the record of the   2022 \href{https://zerodimensional.group/conferences/catdlc/}{conference on computational aspects
of
t.d.l.c.\ groups} in Newcastle, Australia. 

The following will be needed in several places.
\begin{remark} \label{rem: open mapping} The open mapping theorem for Hausdorff  groups says   that every surjective continuous homomorphism from a $\sss$-compact   group $G$    onto a Baire group $H$ is   open. This applies in particular when $G,H$ are countably based  t.d.l.c.\ groups. \end{remark}


  \subsection{Background on computable mathematics}   \label{s:compMath}
 A general goal of compu\-table mathematics is to study the algorithmic content of areas such as algebra \cite{AshKn,ErGon}, analysis  \cite{PourElRich,Wei00}, or topology~\cite{Schroeder:21}.  A first step   is invariably to define what  a computable presentation of a mathematical structure in that area is, such as a countable    group, a complete metric space, or a separable  Banach space.   (Note that ``computable" is the    commonly accepted adjective used with presentations, rather than   ``algorithmic".) One also  introduces and studies   computability for   objects related to the structure. 
For instance,
 in computable analysis one  uses rational approximations to reals in order to  define what it means for a function $f \colon \mathbb R \to \mathbb R$ to be computable (see   below for further detail). 
 %
 A large body of results addresses the algorithmic content of classical results.  
 There are also interesting new questions with no pendant in the classical setting. What is the complexity of  recognizing that two computable presentations present the same structure? How hard is it to determine whether the structure presented has a certain property?  (For instance,     determine whether a computably presented group is torsion-free.) The basic distinction is ``decidable/undecidable". Mathematical logic provides a bevy of descriptive complexity classes, with corresponding completeness notions,  for a more detailed answer in the undecidable case. For instance, torsion-freeness  is of maximum complexity within the    co-recursively enumerable properties.

 Towards defining computable presentations, we first recall the    definition of a computable function on natural numbers,  slightly adapted to our purposes in that we allow the domain not only $\NN^k$ but also   any computable subset of $\NN^k$. 
  \begin{definition} \label{def:computable} Given a   set $S \sub \NN^k$, where $k\ge 1$,  a  function $f \colon S \to \NN$ is called computable if there is a Turing machine that on inputs $n_1, \ldots, n_k $  decides whether the tuple of  inputs   $(n_1, \ldots, n_k) $ is in $S$, and if so outputs $f(n_1, \ldots, n_k)$.  We say that $S$ is computable if the function with domain $S$ and constant value $0$  is computable. \end{definition} One version of the  Church-Turing thesis    states that  computability in this sense   is the same as being computable by some algorithm.
 We note that without the restriction that the TM can decide membership in~$S$, the function is called partial computable. The domain of   a partial computable function is a recursively enumerable relation.

A structure in the  model theoretic sense consists of  a nonempty      set $D$, called the domain,  with relations and functions defined on it.  The following definition was first formulated in the 1960s by Mal'cev~\cite{Ma61} and Rabin~\cite{abR},  independently.
\begin{definition} \label{compStr}  A \emph{computable   structure}  is a structure such that the domain is a 
 computable  set   $D\sub  \NN$,  and the    functions and relations of the structure are computable.
A  countable structure $S$ is called \emph{computably presentable} if some  computable structure $W$ is  isomorphic to  it. In this context we call $W$ a \emph{computable copy} of $S$.  \end{definition} 
Next, we discuss how to define  that an  uncountable structure has a computable presentation.
 In the field of computable analysis,    one    represents all the elements of the structure by ``names" which are directly accessible to computation, and requires that the functions and relations are computable on the names. For   detail  see e.g.\ Pauly~\cite{Pauly:16} or Schr\"oder~\cite{Schroeder:21}.  Names usually are elements of  the set $[T]$ of  infinite paths on some computable subtree $T$ of $\NN^*$, the tree of strings with natural number entries. (Henceforth all paths will be infinite paths on  rooted trees, starting at the root.)     For instance, a  standard name of a real number  $r$ is  a path coding a      sequence of rationals $\seq{q_n} \sN n$ such that $|q_n - q_{n+1}| \le \tp{-n}$ and $\lim_n q_n = r$. Using  so-called ``oracle Turing machines",  one can define computability of functions on $[T]$; we will provide detail in  Section~\ref{s:comp notions}.  This indirectly defines computability on spaces relevant to computable analysis; for instance,  one can define that   a function on $\RR$ is computable.
  The example above shows that names and the object they denote can be of quite a different kind. In contrast, each  totally disconnected Polish space is homeomorphic to $[T]$ for some subtree $T$ of $\NN^*$, which  is advantageous because in principle there is no need to distinguish between names and objects  in our setting.

 An  \emph{ad hoc}  way to define computability  often  works  for  particular classes  of uncountable structures:     impose   algorithmic constraints on   the definition of the class. For instance, 
   the following is due to  Smith~\cite{Smith:81} and La Roche~\cite{LaRoche:81}. 
  
  \begin{definition} \label{df:profgroups}  A profinite group $G$ is computable if  $G= \varprojlim_i (A_i, \psi_i)$ for a computable diagram  $ (A_i, \psi_i)\sN i$ of  finite groups and epimorphisms~$\psi_i \colon A_i \to A_{i-1}$ ($i>0$).   \end{definition}
The aforementioned approach to profinite groups of  Smith and la Roche  admits  equivalent formulations 
that work beyond this class. One such reformulation, in terms of effectively branching subtrees of $\NN^*$, already is in  Smith~\cite{Smith:81}.  For the other, in terms of computably compact metric spaces, see Thm.\ 4.33 of the  recent work~\cite{EffedSurvey}. Results of this sort indicate that the respective notion of computable presentability is  {robust} for  the class.


 The remainder of the introduction      discusses the questions posed at   the beginning of the paper in more detail.

\subsection{Computable presentations of t.d.l.c.\ groups}   \label{s: types of presentations}
We aim at a   robust definition of the class of (countably based)   t.d.l.c.\ groups with  a computable presentation. We want this class to have  good algorithmic closure properties, and also ask that our definition extend the existing definitions for  discrete, and for profinite groups.  We  provide three types of computable presentations: type $S$ and  the more general type $B$ are based on computation with approximations, while  the separate type $M$ is based on a reduction to  countable structures via a duality. They  will all  turn out to be equivalent: a t.d.l.c.\ group has a computable presentation of one type iff it has one of the other type. The equivalences are algorithmically uniform. 

\emph{Type S} (for ``symmetric group")   is based on the fact that each t.d.l.c.\ group $G$ is isomorphic to a closed subgroup of $\S$. We represent such subgroups by subtrees of~$\NN^*$ in a particular  way to be described below. We impose algorithmic conditions on the tree to define when the presentation is computable. This approach is consistent with 
  earlier work~\cite{Greenberg.Melnikov.etal:18} on computable subgroups of $S_\infty$.

\emph{Type M} (for ``meet groupoid") is based on  an  algebraic structure  $\+ W(G)$ on the countable set of compact open cosets in $G$. (Note here that every left coset $A= gU$ of a subgroup $U$  is a right coset of the conjugate subgroup $V= gUg^{-1}$, namely, $A= Vg$.)  This  structure is   a  partially ordered groupoid, with the usual set inclusion and multiplication of a left coset of a subgroup $U$ with a right coset of    $U$.  The intersection of two compact open cosets is such a coset itself,  unless it is empty. So, after  adjoining  $\ES$ as a least element  (which only  interacts trivially with the groupoid structure), we obtain a meet semilattice.   A Type M computable presentation of    $G$ is a computable copy of the meet groupoid of $G$ such that the index function on compact open subgroups, namely $U,V \mapsto | U \colon U \cap V|$,  is also computable. 
  The idea to study appropriate Polish groups via an algebraic structure on their open cosets     appeared in~\cite{Kechris.Nies.etal:18}, and  was further elaborated in a paper  on the complexity of the  isomorphism problem for oligomorphic groups~\cite{Nies.Schlicht.etal:21}. There,       approximation     structures  called ``coarse groups" are used that are given by   the       ternary relation  ``$AB \sub C$", where $A,B,C$ are certain open cosets.   
In the present work, it will be important that we  have explicit access to  the combination of the groupoid and the meet semilattice structures (which coarse groups fail to  provide).  As a further example of the usefulness of meet groupoids,  in \cref{prop: Braconnier} we will show that the meet groupoid of  $G$  can be used to understand  the topological  group of  continuous automorphisms of $G$.

\emph{Type B}  (for ``Baire") of computable presentation   generalizes Type S. For   computable Baire presentations,  one asks that the domain of $G$ is what we call a computably  locally compact subtree of $\NN^*$ (the tree of strings with natural number entries), and the operations are computable in the sense of oracle Turing machines.   This    generalizes the approach in \cite{Smith:81} 
from profinite groups to   t.d.l.c.~groups. However,  it also works   for t.d.l.c.\ structures other than groups. We postpone this approach    until Section~\ref{s:Baire},   because    it  requires  more advanced notions from computability theory. In particular, it relies on    computable functions on the set of paths of a computable tree. These notions will be provided in   Section~\ref{s:comp notions}.  We note that Block and Miller~\cite[Def.\ 1.1]{BlockMiller2025} have recently studied representations of general $0$-dimensional  functional structures via path spaces of trees.

Among our    main results is that  the various approaches  to computable presentations  are equivalent.
This  will be stated formally   in Theorem~\ref{thm:main} and its extension Theorem~\ref{thm:main2}. 
Indeed,   the approaches are equivalent in the strong sense that from a presentation of  one type, one can effectively obtain a presentation of the other  type for the same t.d.l.c.\ group.  Below, we will initially say that a t.d.l.c.\ group is computably t.d.l.c.\ of  a particular type, for instance via a closed subgroup of $\S$, or via a Baire presentation. Once the equivalences have been established, we will often omit this. 
These results    suggest that 
our approach to computability for t.d.l.c.\ groups is  {natural} and  {robust}. We will later support this thesis with examples that  show that many widely studied t.d.l.c.\ groups are computably t.d.l.c. Evidence for the  robustness is also given in~\cite[Thm.\ 1.2]{Melnikov.Ng:23}, where it is  shown that the  notion of computable presentability for general locally compact Polish groups in their Definition 2.16 restricts to our notion in the totally disconnected case.

Baire presentations  appear to be the simplest and most elegant notion of computable presentation for general totally disconnected Polish groups.  However,     computable Baire presentations are hard to study because the domain is usually uncountable (while the  meet groupoids are countable), and there are no specific combinatorial  tools available (unlike the case of  permutations of~$\NN$).  In the proofs of   several results, notably \cref{thm:closure normal},  we will work around this by replacing    a Baire presentation by   a more accessible presentation of Type M or S.

  The   operation    that leads from a t.d.l.c.\ group to its meet groupoid has an inverse operation. Both operations are functorial for the categories with isomorphisms. This yields   a    duality between t.d.l.c.\ groups and a  certain  class of countable meet groupoids  (similar to the duality  in \cite{Nies.Schlicht.etal:21} between oligomorphic groups and the ``coarse groups").  This class  of meet groupoids  can be described axiomatically.  The equivalence of computable presentations of Type B and Type M can be extended to  a computable version of that duality. We will elaborate on this in \cref{rem:duality}. 

 \subsection{Which t.d.l.c.\ groups $G$ have computable presentations?} Dis\-crete groups,  as well as   profinite groups, have  a computable presentation as   t.d.l.c.\ groups if and only if they have one in the previously established senses of \cref{def:computable} and \cref{df:profgroups}, respectively.   
 For discrete groups this will be shown in \cref{ex:discrete computable}; for profinite groups it {will} be obtained  by combining  \cref{ex:prof Baire} and \cref{lem:proc}. 
We     provide several examples of computable presentations for     t.d.l.c.\ groups  outside these two classes. The various  equivalent approaches to computable presentations will be useful for this,  because they allow us  to construct  a   presentation of  the  type most appropriate for a given group. For a group of automorphisms such as  $\Aut(T_d)$, we   use Type S (presentations as closed subgroups of $\S$). For $(\QQ_p, + )$,  we    use   Type M (meet groupoids). For $\SL_n(\QQ_p)$,   we use Type B (computable  Baire presentations).   The first author has shown how to give
an equivalent definition of a computable t.d.l.c.~group in terms of an `effectively $\sigma$-compact metric'. Using this one   can  generalize the latter example to other algebraic groups over~$\QQ_p$. For an outline  of this  work,  see \cite[Section~4]{LogicBlog:21}.
  Neretin's groups~$\+ N_d$ of almost automorphisms of $T_d$, for $d \ge 3$  \cite{Kapoudjian:99},  are computable by upcoming work of Ferov, Skipper and Willis.

 %


\subsection{Associated  computable objects}  Recall that    to a t.d.l.c.\ group $G$ we associate  its meet groupoid $\+ W(G)$, an algebraic structure on its compact open cosets. If $G$ is given by a computable Baire presentation, then we   construct a copy $\+ W = \+ W_{\text{comp}}(G)$ that is computable in a strong sense, essentially including the condition  that some  (and hence any) rational valued  Haar measure on $G$ is computable when restricted to   a function   $\+ W \to \RR$.
We will   show in \cref{prop: comp isom} that   the left,  and hence also the right,  action of $G$ on $\+ W$ is  computable. 
We conclude  that the  modular function on $G$  is computable.

 If $G$ is compactly generated,  for each  Cayley-Abels graph  one can determine a  computable copy,  and any two     copies of this type  are computably quasi-isometric (\cref{prop:CA graph}).  Intuitively, this means that the large-scale structure of $G$ is a computable invariant.

Assertions that the scale function  is computable have been made   for particular t.d.l.c.\ groups in works such as Gl\"ockner~\cite{Glockner:98} and Willis~\cite[Section~6]{Willis:01}; see the   survey~\cite{Willis:17}.
 In these particular cases, it was generally clear what it  means that one can compute the scale $s(g)$:  provide an algorithm that  shows it.  One has to declare what  kind of  input    the algorithm takes; necessarily it has to  be some  approximation to $g$, as $g$   ranges over a potentially  uncountable domain. Our new framework allows us to give a precise meaning to the question whether  the scale function is computable for a particular computable presentation of  a     t.d.l.c.\ group, thus also allowing  for a   negative answer.    In \cref{th:scale noncomp}, which is joint with George Willis, we provide a computable presentation of    a t.d.l.c.\ group $G$ such that the  scale   function fails to be  computable for this presentation.
 
%

%

It remains open whether  for some computably presented  t.d.l.c.\ group~$G$,  the scale is non-computable for \emph{each} of its computable presentations. An even stronger negative result  would be that such a  $G$ can  be chosen to have a unique computable presentation  (see the discussion in \cref{ss:auto} below).


\subsection{Algorithmic versions of constructions that lead from given to new t.d.l.c.\ groups}
Section~\ref{s:closure} 
  shows that for many constructions that have been studied in the theory of t.d.l.c.\ groups,  the class of computably t.d.l.c.\ groups   is closed under suitable algorithmic versions. In particular,    the constructions (1), (2), (3) and (6) described  in  Wesolek~\cite[Thm.\ 1.3]{Wesolek:15}    can be phrased algorithmically  in such a way  that they stay within the class of  computably t.d.l.c.\ groups; this provides   further evidence that our class is robust. These  constructions  are suitable versions, in our algorithmic topological setting, of passing to closed subgroups, taking group extensions via continuous actions,  forming ``local" direct products, and taking quotients by closed normal subgroups (see~\cite[Section~2]{Wesolek:15}  for   detail on these constructions). 
  The algorithmic  version of taking quotients (\cref{thm:closure normal}) is the most demanding; it uses  extra insights, provided in \cref{prop: comp isom},  from   the proofs that the various forms of computable presentation are equivalent. 
  
  Several constructions lead to new examples of t.d.l.c.\ groups with computable presentations. For instance,  after   defining  a computable presentation of  $\SL_{n+1}(\QQ_p)$ ($n \ge 2$, $p$ a prime) directly in \cref{prop: SL2}, we proceed to a computable presentation of $\GL_n(\QQ_p)$ via taking a closed subgroup, and then to $\PGL_n(\QQ_p)$ via taking the  quotient by the centre.

  \subsection{When is a computable presentation  unique?} \label{ss:auto}
Citing  Willis, \cite[Section~5]{Willis:17},  ``it is a truism that computation in a group depends on the description of the group".  
In the present article, we apply our notion of computable presentability of a t.d.l.c.~group to give a formal version of  this statement. We also give some examples  where the general  statement   fails. 
  Viewing a   computable Baire presentation as a description, we are interested in the question whether such a description is unique, in the sense that between any two of them there is a computable isomorphism. 
Adapting terminology for countable structures going back to  Mal'cev (see below),
   we will call such  a  group  \emph{autostable}.     If a t.d.l.c.\ group is  autostable,  then computation in the group can be seen as independent of its particular description.    

      \cref{thm:compCrit} reduces the problem of whether a t.d.l.c.\ group is autostable to the   similar problem in the  countable   setting of meet groupoids.   We apply it to show that     $(\QQ_p, +)$   is autostable, and so is $\ZZ \ltimes \QQ_p $. 
  Proving the autostability of these groups requires more effort than the reader would perhaps expect. For other groups,  such as $\SL_n(\QQ_p)$ for $n \ge 2$ and $\Aut(T_d)$, we leave open whether a computable   presentation is unique up to  computable isomorphism. 
 
\subsection{Some context}   \label{s:context} \

\n {\emph{Related work on autostability.}  A   countable structure is called {autostable} (or  {computably categorical})~\cite{ErGon,AshKn} if it  has  a computable copy, and such a copy is unique   up to computable isomorphism. 
 For example, any computable finitely generated  algebraic structure is autostable because any homomorphism $A \to B$ for such structures $A,B$  can be reconstructed from the images of generators of $A$, and is therefore computable.
In contrast, there is  a discrete 2-step nilpotent group with   {exactly two} computable presentations up to computable isomorphism~\cite{GonGon}.  For t.d.l.c.\ groups, in the discrete case our notion of autostability   reduces to the  established one.

A profinite \emph{abelian} group is autostable if and only if  its    Pontryagin dual is autostable (\cite{Pontr}; also see \cite[Thm.\ 9.5.7]{Downey.Melnikov:book}). Note that this dual is a discrete, torsion abelian group. Autostability of the latter type of groups is characterized in \cite{tor}. In this way one obtains   a characterization of autostability for profinite abelian groups.


   Pour El and Richards~\cite{PourElRich} gave  an example of a   Banach space with two computable presentations so that there is no  computable linear isometry between them.
Works such as~\cite{MelIso} and then \cite{Clanin.McNicholl.Stull.2019,Ilja,McNellp}   systematically study  autostability in separable spaces,  using tools of computable (discrete) algebra. 

\medskip

 \n {\emph{Other related work.}
  Our   work with Lupini~\cite{Lupini.etal:21} focusses on abelian locally compact groups. We introduce two notions of computable presentation for abelian t.d.l.c.\ groups that take into account   their specific structural properties: The first  is based on the fact that such groups are pro-countable, the other  on the fact that such a  group is an extension of a discrete group by a profinite group.  Both notions can be used to provide     examples of computable abelian t.d.l.c.~groups that are neither discrete nor compact.    The same work~\cite{Lupini.etal:21}    states that in the abelian  case,   the  notion of computable presentability given in the present paper is equivalent to these  notions, referring to the present paper. We prove  this in  Appendix~2.

 An approach to computability for Polish groups   was suggested in Melnikov and Montalb\'an~\cite{MeMo} and then developed in, e.g., Melnikov~\cite{Pontr}, Pauly, Seon and Ziegler~\cite{ArnoHaar} and the aforementioned Melnikov and Ng~\cite{Melnikov.Ng:23}.    In that approach, a Polish group is said to be computable if the underlying topological  space  is computably, completely metrized and the group operations are computable operators (functionals) on this space. By~\cite[Cor.\ 1.6]{Pontr}
   there is  computably metrized profinite group that does not possess a computable presentation   in the sense of~\cref{df:profgroups}.   However,   if we additionally assume that the underlying computably metrized space is `computably compact' (equivalently, the Haar measure is computable~\cite{ArnoHaar}),  then 
we can produce a computable presentation of the group in that sense; see  Downey and Melnikov~\cite{EffedSurvey} for a proof.

Nies and Schlicht~\cite[Section~4]{Nies.Schlicht:nd} study a Borel duality of classes of groups closed subgroups of~$\S$ with meet groupoids   in an axiomatic setting, which encompasses the class    of locally Roelcke precompact groups, much larger than the class of t.d.l.c.\ groups. It would be interesting to probe the computable content of this theory. The present authors~\cite[Section~6]{Melnikov.Nies:25} study the Chabauty space of a computably t.d.l.c.\ group~$G$  via its meet groupoid~$\+ W(G)$; in particular, they characterise the computably closed subgroups of $G$ via certain computable ideals of $\+ W(G)$. Ferov, Tornier and Willis~\cite{FerovTornierWillis2025} survey  algorithmic aspects of particular  t.d.l.c.\ groups. They  connect to the present paper, and also   discuss  practical implementations on computer algebra systems.

  \section{Computably locally compact subtrees of $\NN^*$}
\cref{def:comploccompact} of this section   introduces computably locally compact trees. This purely   computability-theoretic concept  will matter for the whole     paper.  For basics on computability theory see, e.g., the first two  chapters of~\cite{Soare:87}, or the first chapter of~\cite{Nies:book} which also contains notation on strings and trees. Our paper is mostly   consistent with the terminology of these two sources. They  also serve for basic concepts such as Turing programs, computable functions, as well as partial computable (or partial recursive) functions, which will be  needed from  Section~\ref{s:comp notions} onwards. In this section we  will   review   some    more specialized  concepts   related to computability.   

\begin{notation} {\rm Let $\NN^*$ denote the   set of strings with natural numbers as entries. We use letters $\sss, \tau, \rho$ etc.\ for elements of $\NN^*$.  The  set  $\NN^*$ can be seen as a directed tree: the empty string is the root, and the successor relation is given by appending a number at the end of a string.     One writes $\sss \preceq \tau$ to denote that $\sss $ is an initial segment of $\tau$, and $\sss \prec \tau$ to denote that $\sss$ is a proper initial segment. For $k \le |\tau|$, by $\tau \uhr k$ one denotes the initial segment of $\tau$ that has length $k$. We can also identify finite strings of length $n$+$1$ with partial functions $\NN \rightarrow \NN$ having finite support $\{0, \ldots, n\}$.
We then write $\tau_i$ instead of $\tau(i)$. By $\max (\tau)$ we denote  $\max \{\tau_i \colon \, i \le n\}$. 
Let  $h \colon \NN^* \to \NN$ be the canonical encoding given by $h(w)= \prod_{i< |w|} p_i^{w_i+1}$, where $w = (w_0, \ldots, w_{|w|-1})$, and  $p_i$ is the $i$-th prime number. By convention $h()=1$.}  \end{notation}

\begin{definition}[Strong indices for finite sets of strings]  For   a finite set $u \sub \NN^*$ let   $n_u=\sum_{\eta \in u} \tp {h(\eta) }$;  one says that $n_u$ is the  \emph{strong index} for $u$.     \end{definition}
  We will usually  identify a finite subset of $  \NN^* $ with its strong index.  
\begin{remark}
Why ``strong index"? In computability theory, by an index  one usually means a (code for a) Turing program that decides  a set or computes a function. Such an index can be obtained from a strong index as defined above. However, a  strong index   tells us  more about the finite set, such as its size.  This is not true for an index as a  Turing program. 
\end{remark}
Unless otherwise mentioned, by a (rooted) tree  we mean a nonempty subset~$T$ of $\NN^*$ such that $(\sss \in T \wedge \rho \prec \sss)  \to \rho \in T$.   For $k \in \NN$ we write \bc $T^{[\ge k]}= \{ \sss\in T \colon \, \sssl \ge k\}$. \ec
By  $[T]  $ one denotes  the set of  (infinite) paths of a tree $T$ starting at the root.  Our trees usually have no leaves, so $[T]$ is a closed set in Baire space $\NN^\NN$ equipped  with the usual product topology. Note that    $[T]$ is compact if and only if  each level of $T$ is finite, in other words, iff  $T$ is finitely branching. 
For $\sss \in T$, let    \bc $[\sss]_T = \{X \in [T] \colon \sss \prec X \}$.  \ec That is,  $ [\sss]_T $ is the cone  of  paths on $T$ that extend  $\sss$.  One  says that $T$ is computable if the set $\{h(\sss) \colon \, \sss \in T \}$ is computable.

%

\begin{definition}[computably  locally compact  trees] \label{def:comploccompact} Let $T$  be a computable subtree of~$ \NN^*$ without leaves.  We say that  $T$ is  \emph{computably  locally compact  (c.l.c.)} if  for some  $k \in \NN$    there is a computable     function $H \colon \NN^k \times \NN \to \NN $ such that \bc $\rho(i) \le H(\rho\uhr k, i)$ for each  $\rho \in T$  of length $>k$  and  each $i < |\rho|$.  \ec     In particular, if   $\sss \in T^{[\ge k]}$, then $ \{ i  \colon \sss\ape i \in T\}$ is finite. 
   \end{definition}

Frequently  we  will have $k=1$, so that only the root can have infinitely many successors. In this case the condition says that  there is a computable function  $H \colon \NN \times \NN \to \NN$ such that $\rho(i) \le H(\rho(0), i)$ for each positive $i< |\rho|$.

%

Given  a c.l.c.\ tree $T$, the compact open subsets of $[T]$ can be algorithmically encoded by natural numbers; the notation below     will be used throughout.
\begin{definition}[Code numbers for compact open sets] \label{defn: str index compact}  Suppose that a tree $T$   is  c.l.c.\   via~$k\in \NN$. For  a finite set $u\sub T^{\ge k}$,  let \bc $\+ K_u= \bigcup_{\eta\in u} [\eta]_T$.   \ec    By a  \emph{code number} for a compact open set $\+ K\sub [T]$ we mean the strong index for a finite set $u$ of strings such that  $\+ K = \+ K_u$.  \end{definition} 
Clearly, each  
    compact open subset  $\+ K$ of $[T]$ is of the   form $\+ K_u$ for some $u$. 
 Such a code number  is not unique (unless $\+ K$ is empty).  So we will need to  distinguish between the actual compact open set, and any of its code numbers.  
Note that     one  can   decide, given    $u \in 	 \NN $ as an input,  whether $u$ is a  code number.

     The following lemma shows that the basic set-theoretic relations and operations are decidable for sets of the form $\+ K_u$, similar to the case of finite subsets of $\NN$.
    \begin{lemma} \label{lem: comp index tree}  Suppose a tree $T$ is  c.l.c.\    via $k \in \NN$.  Given  code numbers  $u, w$,
    
 \bi \item[(i)] one can compute  code numbers for $\+ K_u \cup \+ K_w$ and  $\+ K_u \cap \+ K_w$;
    
   \item[(ii)] one can decide whether $\+ K_u \sub \+ K_w$.  So  one can,  given  a  code number  $u\in \NN$,     compute the minimal  code number $u^*\in \NN$ such that $\+ K_{u^*}=\+ K_u$. 
   \ei \end{lemma}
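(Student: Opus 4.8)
The plan is to handle the three operations in turn, reducing the substantive content to the inclusion test in part (ii). Throughout, recall that a code number is the strong index $n_u$ of a finite set $u \subseteq T$, that from $n_u$ one computably recovers $u$ (by decoding the binary digits of $n_u$ through the decidable range of $h$) and conversely, and that by hypothesis every $\eta \in u$ lies in $T$ and has compact cone $[\eta]_T$. For the union I would simply observe that $\+ K_u \cup \+ K_w = \+ K_{u \cup w}$, so the strong index of $u \cup w$ is a code number for the union; it is computable from $n_u, n_w$ since set union is computable on strong indices, and every string in $u \cup w$ has compact cone, so $\+ K_{u\cup w}$ is defined.

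For the intersection, the key point is that two cones $[\eta]_T$ and $[\xi]_T$ are either disjoint, when $\eta$ and $\xi$ are $\preceq$-incomparable, or nested, when they are comparable, in which case the intersection is the cone of the longer string. Hence, letting $v$ be the set consisting of the longer string of each comparable pair $(\eta,\xi) \in u \times w$, one has $\+ K_v = \+ K_u \cap \+ K_w$. Since comparability of strings is decidable, $v$ and therefore its strong index are computable from $n_u, n_w$, and every member of $v$ lies in $u$ or $w$ and so has compact cone.

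The heart of the lemma is the inclusion test. Since $\+ K_u \subseteq \+ K_w$ iff $[\eta]_T \subseteq \+ K_w$ for every $\eta \in u$, it suffices to decide inclusion of a single compact cone. I would set $L = \max(\{|\eta|\} \cup \{|\xi| : \xi \in w\})$ and establish the finite criterion: $[\eta]_T \subseteq \+ K_w$ holds iff every $\sigma \in T$ with $\eta \preceq \sigma$ and $|\sigma| = L$ extends some $\xi \in w$. One direction is immediate, since every path through $\eta$ passes through a length-$L$ node. For the converse, if some length-$L$ extension $\sigma$ of $\eta$ extends no $\xi \in w$, extend $\sigma$ to a path $X$ (possible as $T$ has no leaves); any $\xi \in w$ with $\xi \prec X$ would satisfy $|\xi| \le L = |\sigma|$ and hence $\xi \preceq \sigma$, a contradiction, so $X$ witnesses $[\eta]_T \not\subseteq \+ K_w$. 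The main obstacle is to see that the relevant set of length-$L$ strings is finite and computable: this is exactly where compactness of $[\eta]_T$ enters through condition (3), which bounds the entries of any extension $\rho$ of $\eta$ by $H(\eta,i)$. Thus there are only finitely many candidate strings $\sigma$, all of which can be listed and tested for membership in $T$ and for extending some $\xi \in w$, making the criterion decidable.

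Finally, for the minimal code number I would search over $v = 0,1,2,\dots$, using that ``$v$ is a code number'' is decidable (decode $v$, then test, for each resulting string, membership in $T$ and compactness of its cone via condition (2)), and that $\+ K_v = \+ K_u$ is decidable by applying the inclusion test of part (ii) in both directions. The first $v$ passing both tests is the minimal code number $u^*$, and the search necessarily halts since $n_u$ itself is a code number with $\+ K_{n_u} = \+ K_u$.
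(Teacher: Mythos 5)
Your proof is correct and follows essentially the same route as the paper's: union is immediate, intersection reduces to the observation that cones are disjoint or nested, and inclusion of a compact cone is decided by checking all extensions of a fixed length bounded via the function $H$ from condition (3). You are in fact slightly more careful than the paper in two small spots — you correctly identify the intersection of comparable cones as the cone of the \emph{longer} string (the paper's text says ``longest common initial segment'', a slip), and your choice $L = \max(\{|\eta|\}\cup\{|\xi|:\xi\in w\})$ avoids a vacuous-quantification edge case when all strings of $w$ are shorter than $\eta$.
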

  \begin{proof}  
  \n (i) The case of  union   is trivial. For the   intersection operation,  it suffices to consider the case that $u$ and $w$ are singletons. For strings $\aaa, \beta \in T$, one has  $[\aaa]_T \cap  [\beta]_T = \ES$ if $\aaa ,  \beta$ are incompatible, and otherwise $[\aaa]_T \cap  [\beta]_T = [\gamma]_T$ where $\gamma $ is the longest common initial segment of $\aaa, \beta$. 
  
\n  (ii)   Let~$H$ be a computable binary function as in   \cref{def:comploccompact}.  It suffices to consider the case that  $u$ is a singleton. Suppose that  $\aaa \in T^{[\ge k]}$.
 The algorithm to  decide whether  $[\aaa]_T  \sub \+ K_w$ is as follows. Let $N$ be the maximum length of a string in $w$. Answer ``yes" if       for each $\beta \succeq \aaa$ of length $N$ such that $\beta(i) \le H(\aaa\uhr k, k)$  for each $i < N$, there is $\gamma \in w$ such that $\gamma \preceq \beta$. Otherwise, answer~``no".
\end{proof}

\begin{definition} \label{def:E} Given a c.l.c.\  tree $T$, let $E_T$ denote the set of \emph{minimal}  code numbers for compact open subsets of $[T]$.    By the foregoing lemma, $E_T$ is decidable. \end{definition}


\section{Defining computable t.d.l.c.\  groups via  closed    subgroups of $\S$} 
   This section, in particular Definition\ \ref{Def1},  spells out  Type S of computable presentations of t.d.l.c.\ groups,   informally described  in Section~\ref{s: types of presentations}.  
 \subsection{Computable  closed subgroups of $\S$} \label{ss:Sinf}
      We first provide   a  computable presentation of the group of permutations of $\NN$. It is  related  to the  computable presentation   in \cite[Def 1.2]{Greenberg.Melnikov.etal:18}, where the group is viewed  as a topological group with a computable compatible metric. However, 
 in our presentation, an element  of the group  is given as a   path on a tree that encodes a  pair $(h, h^{-1})$ where $h$ is a permutation of $\NN$. This enables us to define a  computable tree, denoted  $\mathit{Tree}(\S) $,  each path of which corresponds to a  permutation of $\NN$.

Suppose   strings $\sss_0, \sss_1 \in \NN^*$   both have  length $N$. By    $\sss_0 \oplus \sss_1$ we denote  the string   of length $2N$ 
that  alternates between $\sss_0$ and $\sss_1$. That is, \bc $(\sss_0 \oplus \sss_1)(2i+b) =\sss_b(i)$ for $i<N, b =0,1$.  \ec 
 Similarly, for  functions $f_0, f_1$ on $\NN$, we define a function  $f_0 \oplus f_1$ on $\NN$ by \bc $(f_0 \oplus f_1)(2i+b)= f_b(i)$. \ec
Informally, $\mathit{Tree}(\S)$ is the tree of strings such  that it is consistent that the map given by the entries at odd positions extends to  an  inverse of the map given by entries at even positions. 
Formally,  let  $\mathit{Tree}(\S) =$
 
   \bc  $ \{ \sss \oplus \tau \colon
       \sss, \tau \,  \text{are 1-1} \lland   \sss (\tau(k)) = k \lland \tau(\sss(i))= i \text{ whenever defined}\}$. \ec 

We view  $\S$ as a group   on the set of paths of $\mathit{Tree}(\S)$. Its  domain is  the set of   functions of the form $ h\oplus  h^{-1}$ where $h$ is a permutation of $\NN$;  if  $f= f_0 \oplus f_1$ and $g= g_0 \oplus g_1$  in $ \S$, we define $f^{-1} = f_1 \oplus f_0$ and 
   $g  f    = (g_0 \circ f_0)    \oplus (f_1 \circ g_1  )$.  We will verify in Fact~\ref{ex:S} below  that these  group operations are computable (in the sense of Definition~\ref{def:Comp fcn on tree}).
\begin{definition} We  say that a closed subgroup $C$ of $\S$ is \emph{computable} if its corresponding   tree, namely $\mathit{Tree}(C)=\{\eta  \in \mathit{Tree}(\S) \colon [\eta]_T \cap C \neq \ES \}$ is computable. 
\end{definition}  
\begin{remark}  It is well known  that the closed subgroups of $\S$ are precisely the automorphism groups  of structures $M$ with domain $\NN$. Suppose that $M$ is a computable structure, and  there is an algorithm to  decide whether a bijection between finite subsets of    $M$  (encoded by a strong index) can be extended to an automorphism.  Then  the automorphism group of $M$  is  computable.  To see this, one  uses that  a string $\eta=\sss \oplus \tau$ on $\mathit{Tree}(\S)$ determines the  finite injective map
 \begin{equation} \label{eqn:inj} \aaa_\eta=\{ \la i,k \ra \colon \, \sss(i)= k \lor \tau(k)= i\}\end{equation} 
 between finite subsets of $M$. This map  is extendible to an automorphism of $M$ if and only if  $\eta\in  \mathit{Tree}(C)$. 

 For instance, assuming  a computable bijection between $\QQ$ and $\NN$, the group $ C=\text{Aut}(\QQ, <)$ is computable:  By Cantor's back and forth argument,   a bijection between  finite  subsets of $\QQ$ can be extended to an automorphism of $C$   if and only if   it preserves the ordering. There is an algorithm to decide the latter condition.   \end{remark}

  \subsection{First definition of   computably t.d.l.c.\  groups}  
  
  
  If a   subgroup $C$ of $\S$ is locally compact, then there is an  $n \in \NN$ such that $C\cap V_n$ is compact, where $V_n$ is the group of elements of $\S$  fixing $0, \ldots, n-1$. Thus all the 1-orbits for the natural action of  $C\cap V_n$ on $\NN$ are finite, which implies that in $\Tree C$ each string of length at least $2n$ has only finitely many successors. This motivates the following definition.
  \begin{definition}
  \label{Def1} Let $G$ be a t.d.l.c.\ group. We say that $G$ is  \emph{computably t.d.l.c.} (via a closed subgroup of $\S$) if  there is a closed subgroup $C$ of $\S$ such that $G \cong C$, and      the   tree \bc $\mathit{Tree}(C)=\{\eta  \in  \mathit{Tree}({\S}) \colon [\eta] \cap C \neq \ES \}   $ \ec is c.l.c.\  in the sense of Definition\ \ref{def:comploccompact}.
\end{definition}
   In this context we will often  ignore the difference between $G$ and $C$. That is,  we  will assume that  $G$ itself is a closed subgroup of $\S$.


 Recall from the introduction  that  $\mathrm{Aut}(T_d)$ is the group of automorphism of the undirected tree   $T_d$  where each vertex has degree $d$.
 \begin{example}  \label{ex:Td}Let $d\ge 3$. The t.d.l.c.\  group $G=\text{Aut}(T_d)$ is computably t.d.l.c.\ via a closed subgroup of $\S$.   \end{example}

   \begin{proof} Via  an effective encoding  of the vertices of $T_d$ by the natural numbers, we can view $G$ itself as a closed subgroup of $\S$. A finite injection $\aaa$ on $T_d$ can be extended to an automorphism of $T_d$ iff   it preserves   distances, which is a decidable condition. Each $\eta \in\mathit{Tree}(\S)$ corresponds to   an injection on $T_d$ via (\ref{eqn:inj}). 
  So we can decide whether $[\eta]_{\Tree G } \neq \ES$.  

We show that $\mathit{Tree}(G)$ is c.l.c.\ via $k=1$. Note that   if $\sss\in \mathit{Tree}(G)$ maps $x\in T_d$ to $y\in T_d$, then every extension $\eta \in \mathit{Tree}(G) $ of $\sss$ maps   elements  in $T_d$ at distance~$n$ from $x$ to elements  in $T_d$ at distance $n$ from $y$, and conversely.  
 This yields a computable bound $H$ as required in   Definition~\ref{def:comploccompact}. \end{proof}

The following lemma shows  that, given  a    group $G$ as in \cref{Def1},  the group operations are algorithmic when applied to  its  compact open subsets.   %
  \begin{lemma} \label{lem:basic operations}     Suppose $G$ is  computably t.d.l.c.\  via  a closed subgroup of $\S$ (identified with $G$).   
     Write $T = \Tree G$, which by hypothesis is c.l.c.\ via some $k \in \NN^+$. Recall from  Definitions  \ref{defn: str index compact} and~\ref{def:E}  that  $\+ K_u$ denotes the   open subset of $[T]$  with code number $u$, where $u$ is a strong index for a finite subset of $T^{[\ge k]}$;    $ E_T\sub \NN$ denotes the computable set of        minimal code numbers for compact open subsets of~$[T]$.  %
\bi \item[(i)]   There is  a computable   function  $ I \colon E_T \to E_T$     such that  $\+ K_{I(u)} = (\+ K_u)^{-1}$  for each $u  \in E_T$.   
\item [(ii)]    There is  a computable   function  $M \colon E_T  \times E_T \to E_T$    such that $ \+ K_{M(u,v)}= \+ K_u   \+ K_v$ for each $u,v \in E_T$.  \ei  \end{lemma}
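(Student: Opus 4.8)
The inverse operation on $\S$ is the ``swap'' $f_0\oplus f_1\mapsto f_1\oplus f_0$, by the very definition of $f^{-1}$, so I expect the string-level operation $\sigma\oplus\tau\mapsto\tau\oplus\sigma$ to do the work. Since inversion commutes with finite unions, it suffices to treat a single cone. Replacing each $\eta\in u$ by its finitely many extensions in $T$ of even length (this does not change $\+ K_u$, and there are finitely many by the branching bound of \cref{def:comploccompact}), I may assume every string has the form $\eta=\sigma\oplus\tau$. I claim $([\eta]_T)^{-1}=[\tau\oplus\sigma]_T$: a path $f=f_0\oplus f_1$ lies in $[\eta]_T$ iff $f_0\succeq\sigma$ and $f_1\succeq\tau$, and then $f^{-1}=f_1\oplus f_0\in[\tau\oplus\sigma]_T$, and conversely. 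As $[\eta]_T\sub G$ is nonempty and $G$ is closed under inverses, $\tau\oplus\sigma\in T$; moreover $([\eta]_T)^{-1}$ is compact, being the continuous image of the compact set $[\eta]_T$. Thus $I$ replaces each $\eta=\sigma\oplus\tau$ in $u$ by $\tau\oplus\sigma$ and passes to the minimal code number via \cref{lem: comp index tree}(ii).

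\textbf{Part (ii), reduction to cones.} Since $\+ K_u\+ K_v=\bigcup_{\eta\in u,\zeta\in v}[\eta]_T[\zeta]_T$ and code numbers for finite unions are computable by \cref{lem: comp index tree}(i), it suffices to compute a code number for the product $\mathcal P:=[\eta]_T[\zeta]_T$ of two compact cones. First I note that $\mathcal P$ is a compact open set, hence has a code number: it is open because $[\eta]_T$ is open and $\mathcal P=\bigcup_{g\in[\zeta]_T}[\eta]_T g$ is a union of open right-translates, and it is compact as the continuous image of $[\eta]_T\times[\zeta]_T$.

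\textbf{Part (ii), algorithm and the main obstacle.} The plan is to approximate $\mathcal P$ from above. Using the explicit product formula together with the uniform computable branching bound $H$ from clause (3) of \cref{def:comploccompact}, for each level $\ell$ I can compute the finite set $\Gamma_\ell$ of all length-$2\ell$ initial segments of products $fg$ with $f\in[\eta]_T$ and $g\in[\zeta]_T$; the compact open sets $A_\ell=\bigcup_{\gamma\in\Gamma_\ell}[\gamma]_T$ then decrease to $\mathcal P$. The crux, which I expect to be the main obstacle, is a \emph{stopping criterion}: I must decide, for a given string $\gamma$, whether $[\gamma]_T\sub\mathcal P$, and do so directly from $\eta,\zeta,\gamma$ rather than from a code number for $\mathcal P$ (which would be circular). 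The key observation is that membership ``$p\in\mathcal P$'' is determined by a computably bounded initial segment of $p$: one has $p\in\mathcal P$ iff some $f\in[\eta]_T$ satisfies $f^{-1}p\in[\zeta]_T$, and since $[\eta]_T$, hence its inverse $([\eta]_T)^{-1}$ (computable by part (i)), is compact with computably bounded branching, only finitely many initial segments of such $f$ and only a computably bounded portion of $p$ need be inspected to evaluate the condition $(f^{-1}p)\upharpoonright|\zeta|=\zeta$. The care needed here is to ensure that consistency of this finite data reflects genuine extendability, which holds because every node of $\Tree G$ lies on a path and $G$ is closed under the group operations. Granting this decision procedure, I run through $\ell=0,1,2,\dots$ and halt at the first $\ell$ with $[\gamma]_T\sub\mathcal P$ for all $\gamma\in\Gamma_\ell$; termination is guaranteed because the nested compacta $A_\ell\cap\mathcal P^{\,c}$ decrease to $\ES$, so some $A_\ell$ already equals $\mathcal P$. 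Outputting $\Gamma_\ell$ and minimizing via \cref{lem: comp index tree}(ii) yields the required computable function $M$.
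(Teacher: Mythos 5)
Your part (i) is the paper's proof: swap the two halves of each string in $u$ and minimize the resulting code number. Part (ii) is correct in substance but takes a genuinely different route. The paper proves a single explicit equivalence, displayed as (\ref{eqn:fff}), characterizing membership $f \in [\tau]_T[\sss]_T$ by the existence of witnesses $\beta \succ \tau$ and $\aaa \succ \sss$ of computably bounded length (bounded via the branching function $H$ on the compact cones) such that $\beta_0\cdot\sss_0 \prec f_0$ and $\aaa_1\cdot\beta_1\prec f_1$; this exhibits the product directly as a finite union of explicitly computed cones, so $M(u,v)$ is read off in one step with no iteration. You instead approximate $\mathcal P$ from above by the level sets $A_\ell$ and wrap an inclusion test $[\gamma]_T\sub\mathcal P$ around it, terminating by compactness. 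The technical heart is the same in both proofs --- membership of $p$ in the product is witnessed by finite, computably bounded data about $p$ and about $f\in[\eta]_T$, using $f_1=f_0^{-1}$ and the compactness of the factor cones --- and indeed, once your ``key observation'' is pushed to its conclusion (membership in $\mathcal P$ is determined by $p\uhr N$ for a single computable $N$ depending only on $\eta,\zeta$), you could output the set of admissible length-$N$ strings directly and dispense with the $\ell$-loop entirely; that is precisely the paper's argument. Two details you should tighten: before testing $[\gamma]_T\sub\mathcal P$ you must first check (decidably, by clause (2) of \cref{def:comploccompact}) that $[\gamma]_T$ is compact, since otherwise the universal quantification over $p\in[\gamma]_T$ is not a finite check; this costs nothing because a non-compact cone is never contained in the compact set $\mathcal P$. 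Likewise, the sets $A_\ell$ are only compact once $\ell$ is large enough that every $\gamma\in\Gamma_\ell$ refines a compact basic neighbourhood of a point of $\mathcal P$, which is all your termination argument actually needs.
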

\begin{proof}    We will use Lemma~\ref{lem: comp index tree} without special  mention.
  For (i), let $I(u) $ be the least strong index for the set $ \{ \sss_1 \oplus \sss_0 \colon\, \sss_0\oplus \sss_1 \in u\}$. 
  For (ii), first note that since $\Tree G$ is c.l.c.\ via $k$, we can computably  replace each string $\sss $ in $u$ by its set of    extensions on $\Tree G$ of a given length $N \ge \sssl$. So we may assume    that all the  strings in $u\cup v$ have the same length. Hence  it suffices to define $M(u,v)$ in case that $u = \{\sss\}$ and $ v= \{ \tau\}$  where $\sssl= |\tau| =:n$.

   For such $\sss, \tau$ let $m = 1 + \max(\sigma, \tau)$; that is, $m-1$ is the maximum number occurring in any of the two strings.  
    For strings  $\gamma, \delta \in \NN^*$ such that $|\delta|\ge 1+\max (\gamma)$, by $\delta \cdot \gamma$ we denote the string  $\seq {\delta(\gamma(i))}_{i < |\gamma|}$.  We will verify that for each $f \in G$, 
   \begin{eqnarray} \label{eqn:fff} f \in [\tau]_T   [\sss]_T & \LR &  \ex \beta \succ \tau  \ex \aaa \succ \sss [\beta, \aaa \in T \land   | \beta| = 2m \land |\aaa|= 2 \max (\beta)+2     \\ & &   \hspace{3cm}  \lland  \beta_0 \cdot    \sss_0 \prec f_0 \land    \aaa_1  \cdot  \beta_1 \prec f_1], \nonumber \end{eqnarray} 
   where $\aaa = \aaa_0 \oplus \aaa_1$, $\beta = \beta_0 \oplus \beta_1$, and $f = f_0 \oplus f_1$ as usual.
   Given this,  we let  $M(u,v)$   be the least strong index for the set of strings $(\beta_0 \cdot    \sss_0\oplus  \aaa_1  \cdot  \tau_1)$ as above, which we can compute from $u$ and $v$ by the hypothesis on $T$. This will complete the proof of (ii) of  the claim.

If the left hand side  of (\ref{eqn:fff}) holds,  then  $f = h   g$ for some $g,h\in G$ such that  $\sss \prec g$ and $\tau \prec h$. Then the right hand side holds via $\beta = h \upharpoonright {2m}$ and $\aaa= g \upharpoonright (2\max \beta +2) $.

Now suppose that the right hand side of (\ref{eqn:fff})  holds. Since $\beta \in T$,  there is $h \in G$ such that $h \succ \beta$. Let $g = h^{-1}   f$. Then $g\in G$. Since $h \succ \tau$, it suffices to show that $g \succ \sss$. Note that by definition $f = f_0 \oplus f_1$ where $f_1= (f_0)^{-1}$, and similarly   $g= g_0 \oplus g_1 $ and $ h= h_0 \oplus h_1$. We have $g_0 = h_1 \circ f_0$ and $g_1 = f_1 \circ h_0$. 

We check  that $g_0 \succ \sss_0$ as follows: for each $i< n$ we have $\beta_0(\sss_0(i))= f_0(i)$ by hypothesis. Hence $h_0(\sss_0(i))= f_0(i)$, so $\sss_0(i)= h_1(f_0(i))= g_0(i)$. 

Next,  we  check  that $g_1 \succ \sss_1$: using $ \aaa_1  \cdot  \beta_1 \prec f_1$, for each $i< n$ we have 
\bc $ g_1(i) = f_1(h_0(i)) = f_1(\tau_0(i))= \aaa_1(\beta_1(\tau_0(i)))$. \ec  Since $\beta \in T$, $\beta_0 \succ \tau_0$ and $|\beta_1| > \max (\tau_0)$,  we have $\beta_1(\tau_0(i))=i$. So the value of the rightmost term is $\aaa_1(i)$, which equals $\sss_1(i)$. 
   \end{proof}

  %
   %
  
 
\section{Defining computably t.d.l.c.\  groups via meet groupoids}
  This section provides   detail on  the second type (Type M) of computable presentations of t.d.l.c.\ groups   described in Section~\ref{s: types of presentations}.
 \subsection{The meet groupoid of a t.d.l.c.\  group}  Intuitively, the notion of a  {groupoid} generalizes the notion of a group by allowing that the binary operation is partial.  A~groupoid   is  given by a domain $\+ W$ on which     a unary operation $(.)^{-1}$ and a partial binary operation, denoted  by ``$\cdot $", are defined. These operations satisfy the following conditions:
 \bi \item[(a)] associativity in the sense that $(A \cdot B)\cdot C= 
 A \cdot (B\cdot C)$,  with either both sides or no side defined (and so the parentheses can be omitted);  \item[(b)]  $A\cdot A^{-1}$ and $A^{-1}\cdot A$ are always defined; \item[(c)] if $A\cdot B$ is defined then $A\cdot B\cdot B^{-1}=A$ and $A^{-1}\cdot A\cdot  B =B$.\ei

It follows from (c) that a groupoid satisfies the left and right cancellation laws. One says that an element $U\in \+ W$ is \emph{idempotent} if $U\cdot U =U$. Clearly this implies that $U= U \cdot U^{-1}= U^{-1} \cdot U$ and so $U= U^{-1}$ by cancellation. Conversely, by (c) every element of the form $A\cdot A^{-1}$ or $A^{-1}\cdot A$ is idempotent.
 \begin{definition} \label{def:MeetGroupoid} A \emph{meet groupoid} is a groupoid  $(\+ W, \cdot , {(.)}^{-1})$ that is also a meet semilattice  $(\+ W, \cap  ,\ES)$ of which  $\ES$ is the  least element.     Writing $A \sub B \LR A\cap B = A$ and letting the operation $\cdot$ have preference over $\cap$,
it satisfies the conditions   \bi \item[(d)]   $\ES^{-1} = \ES = \ES \cdot \ES$,    and  $\ES \cdot A$ and $A \cdot \ES$ are undefined for each $A \neq \ES$,  
\item[(e)]   if $U,V$ are idempotents such that $U,V \neq \ES$, then   $U  \cap V \neq \ES$,  \ei

 \bi \item[(f)] $A \sub B \LR A^{-1} \sub B^{-1}$, and


\item[(g)]   if  $A_i\cdot B_i$ are defined ($i= 0,1$) and $A_0 \cap A_1 \neq \ES \neq B_0 \cap B_1$, then \bc $(A_0  \cap A_1)\cdot (B_0 \cap B_1) =  A_0 \cdot  B_0 \cap A_1 \cdot B_1 $. \ec   
 \ei
 Item (g) implies  that the groupoid operations are monotonic: if  $A_i\cdot B_i$ are defined ($i= 0,1$) and $A_0 \sub A_1, B_0 \sub B_1 $, then  $A_0 \cdot B_0 \sub A_1 \cdot B_1$.
 Also,  if $U$ and $V$ are idempotent,  then so is $U \cap V$ (this can also be verified based on  (a)-(f) alone).

 Given meet groupoids $\+ W_0, \+ W_1$, a bijection $h \colon \+ W_0 \to \+ W_1$ is an \emph{isomorphism} if it preserves the three operations. 
  Given a meet groupoid $\+ W$, the letters $A,B, C$ will  range over  elements of $\+ W$, and the letters  $U,V,W$ will range  over idempotents.  \end{definition}

We use set theoretic notation for the meet semilattice because  the ordering for the motivating  examples  of meet groupoids are given by set inclusion: 
\begin{definition} Let $G$ be a t.d.l.c.\  group.  We define a  meet groupoid $\+ W(G)$.  Its  domain consists of  the compact open  cosets in $G$ (i.e., cosets of compact open subgroups of $G$), as well as the empty set.  
 We define $A\cdot B$ to be the usual product $AB$ in case that  $A= B= \ES$, or $A$ is a left  coset of a subgroup $V$ and $B$ is a right  coset of $V$; otherwise $A \cdot B$ is undefined.  \end{definition}


%
 \begin{fact}   \label{fact:WGGW} $\+ W(G)$ is a meet groupoid  with the groupoid operations of~$\cdot$ and   inversion,  and the usual intersection operation $\cap$. \end{fact}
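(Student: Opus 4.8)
The plan is to check each clause of \cref{def:MeetGroupoid} for $\+ W(G)$, reducing everything to elementary coset algebra together with a few standard facts about compact open subgroups of a t.d.l.c.\ group (a conjugate $gUg^{-1}$ of a compact open subgroup is again compact open; a finite intersection of compact open subgroups is compact open; and a translate of a compact open coset is again a compact open coset, being an open and homeomorphic image). The conceptual core is a \emph{stabiliser lemma}: every nonempty element $A$ of $\+ W(G)$ is simultaneously a left coset and a right coset, and the two subgroups involved are recovered as the set products $A^{-1}A$ (the unique subgroup of which $A$ is a left coset) and $AA^{-1}$ (the unique subgroup of which $A$ is a right coset). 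Indeed, writing $A=gU$ gives $A^{-1}A=U$ and $AA^{-1}=gUg^{-1}$, so both are compact open subgroups, and $A=(gUg^{-1})g$ exhibits $A$ as a right coset as well. With this in hand the defining condition for the product becomes purely algebraic: $A\cdot B$ is defined if and only if $A^{-1}A=BB^{-1}$, and in that case $A\cdot B$ is the ordinary set product $AB$, which is again a compact open coset (if $A=gW$, $B=Wh$ with $W=A^{-1}A=BB^{-1}$, then $AB=gWh=(gh)(h^{-1}Wh)$, a coset of the compact open subgroup $h^{-1}Wh$).

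Next I would record the computation underlying associativity. For $A\cdot B$ defined as above, a direct calculation gives $(AB)(AB)^{-1}=gWg^{-1}=AA^{-1}$ and $(AB)^{-1}(AB)=h^{-1}Wh=B^{-1}B$; in words, forming a product preserves the right stabiliser of the left factor and the left stabiliser of the right factor. Consequently both $(A\cdot B)\cdot C$ and $A\cdot(B\cdot C)$ are defined precisely when $A^{-1}A=BB^{-1}$ and $B^{-1}B=CC^{-1}$, so the two bracketings are defined together; and when defined they agree because set multiplication of subsets of $G$ is associative. This proves (a). Axiom (b) is immediate from the stabiliser lemma: $A\cdot A^{-1}$ has definedness condition $A^{-1}A=A^{-1}A$ and equals the subgroup $AA^{-1}$, while $A^{-1}\cdot A$ equals $A^{-1}A$, both being objects (idempotents). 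For (c), with $A=gW$ and $B=Wh$ one computes $(A\cdot B)\cdot B^{-1}=(gWh)(h^{-1}W)=gW=A$ and $(A^{-1}\cdot A)\cdot B=W\cdot(Wh)=Wh=B$, where all intermediate products are defined by the same stabiliser bookkeeping.

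The meet-semilattice structure is essentially the line preceding the Fact: two nonempty cosets, if they meet at a point $x$, can both be written as left cosets through $x$, whence their intersection is $x(U_A\cap U_B)$, a coset of the compact open subgroup $U_A\cap U_B$; otherwise the intersection is $\ES$. Thus $\cap$ is total on $\+ W(G)$ and computes the greatest lower bound for inclusion, with least element $\ES$, and the relation $A\sub B$ defined by $A\cap B=A$ coincides with ordinary set inclusion. The monotonicity axioms are then routine: inversion is an order-preserving involution on subsets, giving (d); and $A_0\sub A_1$, $B_0\sub B_1$ force $A_0B_0\sub A_1B_1$, giving (e), the products being the groupoid products since they are assumed defined. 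Finally, $\ES$ is not a coset of any subgroup, so $\ES\cdot A$ and $A\cdot\ES$ are undefined and $\ES^{-1}=\ES$ trivially, consistent with the stipulation in \cref{def:MeetGroupoid} that $\ES$ interacts only with the semilattice. The one step I would treat as the crux is the closure of the domain under product and intersection together with the precise tracking of the stabiliser subgroups that makes the two bracketings of a triple product defined simultaneously; the topological closure facts are standard for t.d.l.c.\ groups, and once the stabiliser lemma is in place the remaining algebra is forced.
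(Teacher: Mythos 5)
Your verification is correct, and it follows the route the paper implicitly intends: the paper states this Fact without a written proof, offering only the observation that an intersection of two cosets is empty or again a coset, plus the remark that $A\colon U\to V$ means $A$ is a right coset of $U=A\cdot A^{-1}$ and a left coset of $V=A^{-1}\cdot A$ — which is exactly your stabiliser lemma. Your filled-in details (the identities $(AB)(AB)^{-1}=AA^{-1}$ and $(AB)^{-1}(AB)=B^{-1}B$ giving simultaneous definedness of the two bracketings, and the closure computations) are all accurate.
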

 \begin{proof} We first note that $\+ W(G)$  is closed under the operation $\cap$:    if $C$ and $D$ are right cosets of subgroups $U$ and $V$, respectively, and if   $g \in C \cap D$, then   $C \cap D= (U \cap V)g$.

Item  (a) is clear from the definition.  For (b), if $A = gU$ then $A^{-1}= Ug^{-1}$,  so $A \cdot A^{-1}$ is defined. The case of  $A^{-1} \cdot A$  is similar. For (c), let $A$ be a left coset of $U$ and $B$ a right coset of $U$. Then $B \cdot B^{-1}= U$, so $A\cdot U =A$. The other case is symmetric. Item (d) is again clear from the definition, (e) follows because idempotents are subgroups by the discussion before \cref{def:MeetGroupoid}, and (f) is immediate. 
 
 To verify  item (g), let $A_i$ be a right coset of a subgroup $U_i$ and a left coset of subgroup $V_i$,  so that  $B_i$ is  a right coset of $V_i$ by hypothesis. Then    $A_0 \cap A_1 $ is a  right  coset of $U_0 \cap U_1$, and $B_0 \cap B_1$ a right coset of $V_0 \cap V_1$, so the left hand side is defined and a right coset of $U_0 \cap U_1$.  The left hand side   is clearly contained in the right hand side, which  is   a right coset of $U_0 \cap U_1$ as well. So the two sides must be equal.
 \end{proof}
\n    We note that  
$\+ W(G)$ satisfies the    axioms of inductive groupoids   posited in Lawson~\cite[page~109]{Lawson:98}.    (See~\cite[Section~4]{LogicBlog:20} for more on an axiomatic approach to meet groupoids.)

We will apply  the usual group theoretic terminology  to  elements   of  an abstract meet groupoid $\+ W$. If $U$ is an  idempotent of $\+ W$ we call $U$ a \emph{subgroup},   if $AU = A$ we call $A$ a \emph{left  coset} of $U$, and if $UB= B$ we call $B$ a \emph{right coset} of $U$. Based on the axioms, one can verify that if $U \sub V$ for subgroups $U,V$, then the map $A \mapsto A^{-1}$ induces  a bijection between  the left cosets and the right cosets of $U$ contained in~$V$. 
 
 	\begin{remark} \label{rem:cat}  It is well-known~\cite{Higgins:71} that one can view  groupoids as  small categories in which every morphism has an inverse.  The elements of the groupoid are the morphisms of the category. The idempotent morphisms   correspond to the  objects of the category.  One has $A\colon U \to V$ where $U= A \cdot A^{-1}$ and $V= A^{-1} \cdot A$.
Thus, in  $\+ W(G)$, 
  $A\colon U \to V$ means that     $A$ is  a right coset of $U$ and a  left coset of $V$.    \end{remark}

\subsection{Second definition of computably    t.d.l.c.\  groups}

 \begin{definition}[Haar computable meet groupoids] \label{def:comp_meet_groupoid} A meet groupoid $\+ W$ is called \emph{Haar computable}~if 
\bi \item[(a)]  its  domain is a computable subset $D$ of $\NN$; 

\item[(b)] the   groupoid and meet operations    are computable in the sense of \cref{def:computable}; in particular, the relation $\{ \la x,y \ra\colon \, x,y \in D \lland x\cdot y \text{ is defined}\}$   is computable;
 
 \item[(c)]     the partial function with domain contained in  $D \times D$ sending  a pair of subgroups $U, V\in \+ W $ to $|U:U\cap V|$ is   computable.   \ei
    \end{definition}
  Here   $|U \colon U \cap V|$  is  defined abstractly as the number of left, or equivalently right, cosets of the nonzero idempotent  $U \cap V$ contained in $U$;  we implicitly require  that this number is always finite.  Note that by  (b), the partial order induced by the meet semilattice structure of $\+ W$  is computable. Also, (b) implies that being a subgroup is decidable when viewed as a property of elements of the domain  $D$; this is used in (c).   Condition (c)   corresponds to the computable bound  $H(\sss, i)$ required in (3) of   Definition~\ref{def:comploccompact}.         For ease of reading we will say that $n \in D$ \emph{denotes} a coset $A$, rather than saying that~$n$ ``is" a coset. 
   
%
 \begin{definition}[Computably t.d.l.c.\  groups via meet groupoids] \label{Def2}Let $G$ be a   t.d.l.c.\  group. We say that $G$ is \emph{computably t.d.l.c.}\  via a meet groupoid  if    $\+ W(G)$ has a Haar computable       copy  $\+ W$.  In this context, we call $\+ W$ a computable presentation of $G$ (in the sense of meet groupoids).   \end{definition}


%
 \begin{remark} \label{rem:Haar computable}  In this setting, Condition (c) of \cref{def:comp_meet_groupoid} is equivalent to  saying that every Haar measure $\mu$  on $G$   that assigns a rational number to some compact open subgroup  (and hence  is rational-valued)  is computable on $\+ W$, in the sense that the function assigning   to a compact open coset $ A$  the rational $\mu(A)$ is computable. Consider left Haar measures, say. First suppose that (c) holds. Given $A$, compute the  subgroup $V$ such that $A= A\cdot V$, i.e., $A$ is a left coset of $V$.   Compute $W = U \cap V$. We have  $\mu(A) = \mu(V)= \mu(U) \cdot  |V:W|/|U:W|$. 

Conversely, if the Haar measure is computable on $\+ W$, then (c) holds because $|U\colon V|= \mu(U)/\mu(V)$.
\end{remark}

   For discrete groups, the condition (c) can be dropped, as the proof of the following shows.
 \begin{example} \label{ex:discrete computable} A countable discrete group  $G$ is computably t.d.l.c.\ via a meet groupoid 
 $\LR$ $G$ has a computable copy in the usual sense of  \cref{compStr}. \end{example}
 \begin{proof} For the  implication $\LA$,  we may assume that  $G$ itself is computable; in particular, we may assume that its domain is a computable subset of  $\NN$.  Each compact coset in  $G$  is finite, and hence can be represented by a strong index for a finite set of natural numbers. Since the group operations are computable on the domain, this implies that   the meet groupoid of $G$  has a  computable copy. It is then trivially Haar computable. 
 
 For the implication $\RA$, let $\+ W$ be a Haar computable copy of $\+ W(G)$.  Since $G$ is discrete, $\+ W$ contains a least subgroup $U$.  The set of left cosets of $U$ is computable, and forms a group with the groupoid and inverse operations.  This yields the required computable copy of $G$. \end{proof}
 
%
 
By $\mathbb{Q}_p$ we denote the additive group of the $p$-adics. By the usual definition of semidirect products (\cite[p.\ 27]{Robinson:82}),   $\ZZ \ltimes \QQ_p$ is the  group defined on the Cartesian product $\ZZ \times \QQ_p$ via the binary operation  $\la z_1, \aaa_1 \ra \cdot  \la z_2, \aaa_2\ra = \la z_1 + z_2, p^{z_2} \aaa_1  + \aaa_2\ra $. 
This turns $\ZZ \ltimes \QQ_p$ into   a topological group with  the product topology. 


\begin{proposition} \label{ex:Qp} For any prime $p$,   the  additive group  $\QQ_p$ and   the group $\ZZ \ltimes \QQ_p$  are  computably t.d.l.c.\ via a meet groupoid. 
    \end{proposition}

\begin{proof} 
  We begin with  the additive group $\QQ_p$. Note that  its    open proper  subgroups  are  of the form $U_r:= p^{r}\ZZ_p$ for some  $r\in \ZZ$. Let $C_{p^\infty}$ denote the Pr\"ufer group $\ZZ[1/p]/\ZZ$, where $\ZZ[1/p]= \{ z p^{-k} \colon \, z \in \ZZ \lland k \in \NN\}$. 
 For each $r$ there  is a canonical  epimorphism  $\pi_r\colon  \QQ_p \to C_{p^\infty}$ with kernel $U_r$: 
   if  $\aaa= \sum_{i=-n}^\infty s_ip^i$ where    $0 \le s_i < p$, $n\in \NN$, we have
 \bc $\pi_r(\aaa) = \ZZ + \sum_{i=-n}^{r-1} s_ip^{i-r}$; \ec 
  here an empty sum is interpreted as $0$. (Informally, $\pi_r(\aaa)$ is obtained by taking the  ``tail" of $\aaa$ from the position $r-1$ onwards to the last position, and shifting it in order  to represent     an element of $C_{p^\infty}$.)   So   each compact open coset in $\QQ_p$  can be uniquely written in  the form $D_{r,a}= \pi_r^{-1}(a)$ for some  $r \in \ZZ$ and $a \in C_{p^\infty}$. The domain $S\sub \NN$ of the  Haar computable copy $\+ W$ of $\+ W(\QQ_p)$ consists of the natural numbers  encoding such pairs $\la r,a\ra$ according to some fixed encoding. They will be identified with the cosets they denote. 

The groupoid operations are computable because we have $D_{r,a}^{-1}= D_{r,-a}$, and $D_{r,a} \cdot D_{s,b}= D_{r, a+b}$ if $r=s$, and undefined otherwise.  
 It is easy to check that   $D_{r,a} \sub D_{s,b}$ iff $r \ge s$ and $p^{r-s}a=b$. So the inclusion relation is decidable.  We have  $D_{r,a} \cap D_{s,b}= \ES$ unless one of the sets  is contained in the other, so the meet operation is computable.   
 Finally,   for $r\le s$, we have $|U_r: U_s|= p^{s-r}$ which is computable.

Next, let  $G = \ZZ \ltimes \QQ_p$; we   build a  Haar computable copy $\+ V$ of $ \+ W (G)$. We will extend the listing $(D_{r,a})_{r \in \ZZ, a \in C_{p^\infty}}$ of compact open cosets in $\QQ_p$ given above.  For each   compact open subgroup of $G$, the projection onto $\ZZ$ is a compact open subgroup of $\ZZ$, and hence    trivial.  So the only compact open subgroups of $G$ are of the form~$U_r$.  Let $g\in G$ be the generator of $\ZZ$  such that $g^{-1} \aaa g = p \aaa$ for each $\aaa \in \QQ_p$ (where $\ZZ$ and $\QQ_p$ are thought of as canonically embedded into $G$). Each compact open coset of~$G$ has a unique  form $g^z D_{r,a}$ for some $z \in \ZZ$.  Formally speaking, the domain of the computable copy of $\+ W(G)$  consists of natural numbers encoding  the triples $\la z, r, a\ra$ corresponding to such cosets according to some fixed encoding; as before they will be identified with the cosets they denote. 

 To show that the  groupoid and meet    operations are   computable, note that we have     $gD_{r, a } = D_{r-1, a} g  $ for each $r \in \ZZ, a \in C_{p^\infty}$, and hence  
 $ g^z D_{r, a } = D_{r-z, a}  g^z $ for each $z \in \ZZ$.
Given two cosets $g^v D_{r,a} $ and $ g^w D_{s,b}= D_{s-w, b}g^w$,  their composition is   defined iff $r=s-w$, in which case the result is $g^{v+w} D_{s,a+b}$. 
The inverse of $g^z D_{r,a}$ is $D_{r, -a}g^{-z} = g^{-z}D_{r-z, -a}$. 

To decide  the inclusion relation, note that we have $g^z D_{r,a} \sub g^w D_{s,b}$ iff $z=w$ and $D_{r,a} \sub D_{s,b}$, and otherwise, they are disjoint. Using this, one can show that the meet operation is computable (by an argument that works in any computable meet groupoid $\+ V$): recalling the notation in \cref{rem:cat},  if $A_0, A_1 \in \+ V$,  $A_i \colon U_i \to V_i$, and $A_0,A_1$ are not disjoint, then  $A_0 \cap A_1$ is the unique $C \in \+ V$ such that $C \colon U_0 \cap U_1 \to V_0 \cap V_1$ and $C\sub A_0, A_1$.  Since $\+ W$ satisfies Condition (c) in \cref{def:comp_meet_groupoid}, and $\+V$  has no subgroups beyond the ones present in $ \+ W$, we conclude that $\+ V$ is Haar computable. 
\end{proof}


\section{Uniform equivalence of two   definitions of computably t.d.l.c.}  We  show that Definitions~\ref{Def1} and~\ref{Def2}  of computably  t.d.l.c.\  groups are equivalent in a computationally uniform way. This provides    a  first  evidence for the robustness of this class of t.d.l.c.\ groups.
 
  \begin{theorem}  \label{thm:main} \ \\ A  group   $G$ is computably t.d.l.c.\ via a closed subgroup of $\S$  
$\LR$   

\hfill    $G$ is computably t.d.l.c.\   via  a meet groupoid.

\n Moreover, from a presentation of~$G$ of one type, one can effectively obtain a presentation of $G$ of the other type.  \end{theorem}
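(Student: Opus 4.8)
The plan is to prove both implications effectively, using the code-number calculus for c.l.c.\ trees from \cref{lem: comp index tree} and \cref{lem:basic operations}, together with the Haar computability conditions of \cref{def:comp_meet_groupoid}.

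\emph{From \cref{Def1} to \cref{Def2}.} Suppose $G$ is identified with a closed subgroup of $\S$ such that $T=\Tree G$ is c.l.c. I would take as the domain of the meet groupoid the decidable set $E_T$ of minimal code numbers, restricted to those $u$ for which $\+ K_u$ is a nonempty coset, together with a symbol for $\ES$. The key point is that a nonempty compact open set $A$ is a coset of a (necessarily compact open) subgroup if and only if $AA^{-1}A\sub A$, and then $A^{-1}A$ is the subgroup of which it is a coset. Using the computable inverse $I$ and product $M$ of \cref{lem:basic operations} together with the decidable inclusion of \cref{lem: comp index tree}, this is a decidable condition on $u$. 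On this domain the groupoid inverse is $u\mapsto I(u)$; the meet is intersection (\cref{lem: comp index tree}), with disjoint cosets sent to $\ES$; and the groupoid product $A\cdot B$ is defined exactly when $A^{-1}A=BB^{-1}$ (a decidable equality of subgroup code numbers), equalling $\+ K_{M(u,v)}$ when defined. Then $u\mapsto \+ K_u$ is an isomorphism onto $\+ W(G)$, and every step is uniform.

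\emph{Computing the index.} The one non-formal ingredient here is Condition (c): given subgroups $U,V$ I must compute $|U:U\cap V|$. Writing $W=U\cap V$ (a code number by \cref{lem: comp index tree}), I would enumerate basic cones $[\eta]_T\sub U$ and refine each until $[\eta]_T^{-1}[\eta]_T\sub W$, which forces $[\eta]_T$ into a single coset of $W$; this terminates because $W$ is open and the cones shrink, and it is decidable by \cref{lem: comp index tree} and \cref{lem:basic operations}. Since $U$ is compact, finitely many such cones cover it, and $[\eta]_T,[\eta']_T$ lie in the same coset of $W$ iff $[\eta]_T^{-1}[\eta']_T\sub W$; the number of classes equals $|U:W|$ and is computed effectively.

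\emph{From \cref{Def2} to \cref{Def1}.} This is the substantial direction. Given a Haar computable copy $\+ W$ of $\+ W(G)$, I would enumerate its subgroups $V_0,V_1,\dots$ (decidable, since $U$ is a subgroup iff $U\cdot U=U$) and set $U_n=V_0\cap\cdots\cap V_n$, a computable descending chain of compact open subgroups forming a neighbourhood basis of the identity with $\bigcap_n U_n=\{e\}$ by van Dantzig's theorem. Let $G$ act by left translation on the decidable set $X=\bigsqcup_n G/U_n$ (realized as pairs $(n,A)$ with $A$ a left coset of $U_n$). This action is faithful, since its kernel lies in $\bigcap_n U_n$, and it realises $G$ as a closed subgroup $C$ of $\S$ because the point stabilisers are the conjugates $hU_nh^{-1}$, which form a neighbourhood basis, so the permutation topology recovers that of $G$. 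I would then define $\Tree C$ directly: the finite injection $\aaa_\eta$ on $X$ of \eqref{eqn:inj} imposes finitely many constraints $gB_i=C_i$ with $B_i,C_i$ cosets of the same $U_{n_i}$, each equivalent to $g\in C_i\cdot B_i^{-1}$ (a coset computed by the groupoid operations). Hence $\aaa_\eta$ extends to an element of $C$ iff the meet $\bigcap_i(C_i\cdot B_i^{-1})$ is nonempty, which is decidable; so $\Tree C$ is computable. To verify c.l.c., note every nonempty element of $\+ W$ is a compact coset, so a nonempty string $\eta$ pins $g$ into a compact coset and $[\eta]_C$ is compact, while only the root yields the non-compact space $[C]\cong G$; thus Clause (2) of \cref{def:comploccompact} is decidable and Clause (1) holds, and for Clause (3) the admissible images $g(j)$ below a compact cone range over a finite computable set, with the effective branching bound $H$ supplied precisely by the index function of Condition (c). I expect the main obstacle to be exactly this reconstruction: selecting a faithful action whose permutation topology recovers that of $G$ (so that $C$ is closed) while keeping $\Tree C$ computable and effectively locally compact. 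The identity ``extendibility $=$ nonempty intersection of translated cosets'' is what reconciles these demands and makes the equivalence uniform in both directions.
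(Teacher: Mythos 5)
Your argument is correct and, in the forward direction, essentially coincides with the paper's: both take as domain the minimal code numbers of cosets, decide cosethood and definedness of the product via \cref{lem:basic operations} and \cref{lem: comp index tree}, and the only divergence is that the paper computes $|U:U\cap V|$ by enumerating left cosets of $U\cap V$ from the already-built domain until their union covers $U$, whereas you cover $U$ by cones $[\eta]_T$ with $[\eta]_T^{-1}[\eta]_T\sub U\cap V$ --- both work, yours being slightly heavier than necessary once the domain is in hand. The backward direction is where you genuinely diverge. The paper lets $G$ act by left translation on the \emph{entire} domain of $\+ W$ and defines the target intrinsically as $\+ G_{\text{comp}}(\+ W)$ (\cref{def:Gof}), the closed set of permutations preserving the meet and the partial right translations; it then proves that $g\mapsto(A\mapsto gA)$ is onto this closed group by a filter/compactness argument, and open by the open mapping theorem. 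You instead act on $\bigsqcup_n G/U_n$ for a computable cofinal chain of subgroups and take $C$ to be the image of $G$. Both routes rest on the same two pillars --- extendibility of a finite injection is equivalent to nonemptiness of the computable meet $\bigcap_i C_i\cdot B_i^{-1}$, and the branching bound comes from the index function (the paper's \cref{lem: sst}) --- so your tree is indeed computable and c.l.c. What your route buys is that surjectivity is free; what it costs is that closedness of $C$ in $\S$ must be argued separately, and your justification (``the permutation topology recovers that of $G$'') only yields a topological embedding: you should add that a locally compact subgroup of a Hausdorff group is closed (or that the image is Polish, hence $G_\delta$, hence closed as a subgroup). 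This is a standard fact, so it is a thin spot rather than a gap. Note finally that the paper's intrinsic $\+ G_{\text{comp}}(\+ W)$ is reused later (\cref{prop: comp isom}, and the duality of \cref{rem:duality}), which is a reason to prefer the action on all of $\+ W$ even though your smaller $G$-set suffices for the present theorem.
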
 
\begin{proof} ``$\RA$": Suppose that $G$ is computably t.d.l.c.\  as a closed subgroup of $\S$. To save on notation, we may assume that $G$ itself is a closed subgroup of $\S$ showing this.  We will obtain   a Haar computable copy of its meet groupoid $\+ W(G)$.

Recall from Definition\ \ref{def:E}  that  $E=E_{\Tree G}\sub \NN$ is   the  decidable  set of       minimal code numbers for compact open subsets of~$G$.   For $u \in E$   we  can decide whether $\+ K_u$ is a subgroup: this is the case precisely when  $\+ K_{M(u,u)}= \+ K_u$  and $\+ K_{I(u)} = \+ K_u$, where   the computable functions $I,M$ were defined in  Lemma~\ref{lem:basic operations}. We can also decide whether $B=\+ K_v$ is a coset: this is  the case   precisely when  $B   B^{-1}$ is a subgroup $U$ (in which case $B$ is its right coset). Equivalently,    $\+ K_{M(v,I(v))}$ is a subgroup, which is a decidable condition.  

The  domain $D$ of the computable copy  of $\+ W(G)$ is the subset of $ E$  consisting of the      code numbers for cosets. For   cosets $A= \+ K_u, B= \+ K_v \in \+ W(G)$  recall  that $A \cdot  B$ is defined iff $A$ is a left  coset of a subgroup $V$ such that  $B$ is a right  coset of~$V$. Equivalently,    $\+ K_{M(I(u), u)}= \+ K_{M(v,I(v))}$, which   is a decidable condition. 
So the groupoid operations and meet operation are computable.

It remains to show that,   $u,v\in D$ coding   subgroups $U,V$, one can compute $|U  : U \cap V|$.    
To do so, one  enumerates (code numbers in   $D$ of)     left cosets of $U \cap V$ contained in $U$,  until their union equals $U$. 
 (There is an algorithm to decide the latter condition   by Lemma~\ref{lem: comp index tree}(i).) Then one  outputs the number of cosets found.

%
%

 \medskip

\n   ``$\LA$": The basic idea is    that the left translation action of $G$ on the given Haar computable presentation $\+ W$ of its meet groupoid, namely  $\la g, A \ra \mapsto gA$ for   $g \in G,  A \in \+ W$,  yields a computable presentation of $G$ via a closed subgroup of~$\S$. The main work  is to show that the group $\wt G$ of permutations of $\+ W$ so obtained can be described   by a c.l.c.\  tree   (\cref{def:comploccompact}) whose paths can be viewed as permutations.  

To carry this out, we introduce an  operator $\+ G_\text{comp} $ from meet groupoids to permutation groups that  will be needed frequently later on. In  case     that $\+ W$ is a copy of  the  meet groupoid  of $G$, the operator  produces the  permutation group given by this left action of $G$. However, the operator    can be defined   based on the meet groupoid alone, and  thus can be  seen as a  dual of the operator $G \to \+ W(G)$    obtained in the first part of the proof.   

We emphasise     that the elements of $\S$ are not actually permutations, but paths $p$ on $ \Tree {\S}$ encoding   pairs consisting of a permutation and its inverse. Nonetheless, if   $A \in \+ W$ is denoted by $x$,  below  we   often  suggestively  write $p(A)$ for the element of $\+ W$ denoted by $p(2x)$.  
 \begin{definition} \label{def:Gof} Given a   meet groupoid $\+ W$ with domain $D\sub \NN$, let $\wt G= \+ G_\text{comp} (\+ W)$ be the closed subgroup of $ \S$ consisting of elements $p$   such that  $p(2x), p(2x+1) \in D$ for each $x \in D$,  $p(2x )= p(2x+1)= x $ for each  $x \not \in D$, and    \bi \item[(a)]   $p$ preserves the meet operation of $\+ W$ \item[(b)]  $p(A) \cdot B = p(A\cdot B) $ whenever $A \cdot B$ is defined.  \ei \end{definition}
  The formal conditions  expressing  (a) and (b) state that: \bi \item[-] for all $x,y \in D$, we have $p(2 (x \cap  y))= p(2x) \cap p(2y)$; \item[-]  if $x \cdot y$ is defined then so is $p(2x) \cdot y$, and $p(2x) \cdot y= p(2(x\cdot y))$. \ei  Note that such $p$ can be identified with   the automorphisms of the  structure obtained from $\+ W$ which, instead of composition, for each $B$ has a  partial unary operation $A \mapsto A \cdot B$.

Suppose now  that $\+ W$ is as in~\cref{Def2}. 
  Then there is an isomorphism of meet groupoids $  \+ W \to \+W(G)$, which below we will use to identify $\+ W$ and $\+ W(G)$.   We can make  the assumptions that $0\in D$,  and that $0$ denotes   a subgroup $U$  in $\+ W$,   without  affecting  the uniformity statement of the theorem: otherwise we can search   $\+ W$ for the least $n$ such that $n$ is a subgroup, and then work with a new copy of $\+ W(G)$ where the roles of~$0$ and $n$ are swapped.    

We note that for  each  subgroup $U\in \+ W(G)$, the set $B=p(U)$  is a left coset of~$U$ since  $B\cdot U = p(U)\cdot U= p(U\cdot U)= B$.  
Define a group homomorphism  $\Phi \colon G \to \wt G$ by letting   $\Phi(g)$ be   the element of $\S$ corresponding to the  left translation by~$g$, i.e. $A \mapsto gA$ where $A \in \+W(G)$.  (See the comment after \cref{def:Gof}.) Note that $\Phi$ is injective because the compact open subgroups form a neighbourhood basis of $1$:
if $g \neq 1$ then $g \not \in U$ for some compact open subgroup $U$, so that  $\Phi(g)(U) \neq U$.

\begin{claim} \label{cl:isomoG}  $\Phi \colon  G \to  \wt G$ is an isomorphism of topological groups. \end{claim} 
\n  To show  that $\Phi$ is onto, let $p \in \wt G$. Clearly the subgroups form a filter in $\+ W(G)$. Using (a) this implies that   $\{p(U)\colon U \in \+ W(G)  \text{ is a subgroup}\}$   is also a  filter on $\+ W(G)$. Since this filter  contains a compact set, there is an element $g$ in its intersection.
  Then $\Phi(g)= p$: since  the set $B=p(U)$  is a left coset of $U$,  it equals $gU$. So, if $A$ is a right coset of $U$, then $p(A)= p(U\cdot A)=  B\cdot A = gA$.
 
  To show that  $\Phi$ is continuous at $1$ (and hence continuous), note that a basis of neighbourhoods of the identity in $\wt G$ is given by the open sets
   \bc $\{ p\in \wt G \colon \forall i \le n \, [p(A_i) = A_i]\}$, \ec where $A_1, \ldots, A_n \in \+ W(G)$. Given such a set, suppose $A_i$ is a right coset of $U_i$, and let $U = \bigcap U_i$. If $g \in U$ then $gA_i = A_i$ for each $i$. 
By the open mapping theorem for Hausdorff  groups \cref{rem: open mapping} $\Phi$ is open. This verifies the claim.

To complete the proof of the implication ``$\LA$" of the theorem, we now    show  that 
$\Tree {\wt G} $ is c.l.c.\  as in Definition \ref{def:comploccompact}, so that  $\wt G$ is a computable presentation of $G$ via a closed subgroup of $\S$ as required. The following claim will  be used to show that   $\Tree {\wt G}$ is computable, using the assumption  that $\+ W$ is  a  computable meet groupoid.   As usual by $A,B$ we denote compact open cosets of $G$, identified here with elements of $D \sub \NN$, the domain of $\+ W$. 
\begin{claim}  \label{cl:compext}A finite injection $\aaa $ on $\NN$ can be  extended to some $p \in \wt G$ $\LR$

\n $\aaa(x)= x$  for each $x \not \in D$, and 

  $B\cdot A^{-1}$ is defined whenever $\aaa(A)= B$, and  $\bigcap  \{ B \cdot A^{-1} \colon \, \aaa(A) = B\} $ is non-empty. \end{claim}
\lapf  Let $g$ be an element of the intersection. Then $gA = B\cdot A^{-1} \cdot  A = B=\aaa(A)$ for each $A \in \dom (\aaa)$.

\rapf  Suppose $p\in \wt G $ extends $\aaa$.  By \cref{cl:isomoG}, there is $g\in G$ such that  $p = \Phi(g)$.  Then $gA = p(A) = B$ for each   $A,B$ such that $\aaa(A)= B$. Such  $A,B$ are   right cosets of the same subgroup.  Hence $B\cdot A^{-1}$ is defined, and clearly  $g$ is in the intersection. 
This establishes the   claim.
 
 \medskip
 
 Recall  from (\ref{eqn:inj}) in Subsection~\ref{ss:Sinf} that a string $ \sss \oplus \tau\in \Tree \S $ gives rise to a finite injection $\aaa_{ \sss \oplus \tau}$, defined by   $\aaa_{ \sss \oplus \tau}(r)=s $ iff $\sss(r)= s \,   \lor\,  \tau(s)=r$.    So   \[ S = \{\sss \oplus \tau \colon \,  
 \aaa_{ \sss \oplus \tau} \text{ can be extended  to some } p \in \wt G\}\]
 is a  subtree of $\Tree \S$ with no leaves that is computable because the condition in \cref{cl:compext} is decidable. Clearly $\wt G= [S]$, and  hence $S=\Tree {\wt G}$.

%
Claim~\ref{cl:fcuk} below will  verify  that  $S $  is  a c.l.c.\ tree in the sense of Definition\ \ref{def:comploccompact} by providing a computable bound $H$. The following lemma does the main work, and will also be used  later on, such as  in the proof of Prop.\ \ref{prop: comp isom} below.  Informally, it says that given  some subgroup $U \in \+ W$,  if one  declares that $p\in \wt G$ has  a   value $L\in \+ W$  at $U$, then one  can compute for any $F \in \+ W$ the  finite set of possible values of $p$ at $F$. 
\begin{lemma} \label{lem: sst} Suppose that   $U \in \+ W$ is a subgroup  and $L$ is a left coset of $U$. Let $F \in \+ W$. One can uniformly in    $U, L$ and $F$ compute a strong index for the  finite set  $\+ L=\{ p(F) \colon \, p \in  [S] \lland p(U) = L\}$.  
\end{lemma}
To see this, first one computes $V= F\cdot F^{-1} $,  so that $F$ is a right coset of the subgroup $V$. Next  one computes   $k=|U \colon U \cap V|$, the number of left cosets of $U \cap V$ in $U$.  Note that  $\+ L_0= \{ p(U \cap V) \colon p \in  [S] \lland p(U) = L \}$ is the set of left cosets of $U \cap V$ contained in $L$.  Clearly this set  has size $k$. By searching $\+ W$ until all of its elements have appeared, one can compute a strong index for this  set.
 Next one computes a strong index  for   the set $\+ L_1$ of left cosets $E$ of $V$ such that $C \subseteq E$ for some $C \in \+ L_0$ (this uses that, given  $C$, one can compute $E$). 
  Finally one outputs a strong {index}  for the set $\{ E F \colon \, E \in \+ L_1\}$, which equals $\+ L$.  This proves the lemma. 
 


For  the following  recall that the computable set $D\sub \NN$ is the domain of $\+ W$, and that $0 \in D$ denotes a subgroup $U$. If $a,b \in D$ denote left cosets $L_0,L_1$ of $U$, then the string  $(a,b) $ is extended by a path $p\in [S]$ iff $p(U)= L_0$ and $p^{-1}(U)= L_1$. 
   \begin{claim} \label{cl:fcuk} There is a computable binary   function $H$ such that,  if    $ \sss$ is the string  $(a,b)$ and $\sss \in S$  (so that  $a,b\in D$),    and $\rho \in S$ extends $\sss$, then  $\rho(i) \le H(\sss, i)$ for each $i < |\rho|$.  Thus, the tree $S$ is c.l.c.\ via the parameter $k=2$.  \end{claim}

\n     
To see this, let   $v=\lfloor i/2 \rfloor$. If $v  \not \in D$, let $H(\sss, i)= i$. Otherwise,    let $F$ be the coset denoted by $v$. If $i= 2v$, let $L$ be the coset denoted by $a$. If $i= 2v+1$, let $L$ be the coset denoted by $b$.  Applying  Lemma~\ref{lem: sst} to $U, L, F$, one can compute     $H(\sss, i) $ as the greatest number denoting an element of $\{ p(F) \colon \, p \in  [S] \lland p(U) = L \}$.   
   \end{proof}

\subsection*{Understanding the automorphism group via the meet groupoid}
The meet groupoid  $\+ W(G)$ might turn out to be a useful tool for studying $G$, independently of   algorithmic considerations.  For a general locally compact group $G$, the group $\Aut(G)$  becomes a Polish group via the Braconnier topology, given by the sub-basis of identity neighbourhoods of the form 
\bc $\mathfrak A(K,U) = \{ \aaa \in \Aut(G)  \colon \, \forall x \in K  [\aaa(x) \in Ux \lland \aaa^{-1}(x) \in Ux] \} $, \ec
where $K$ ranges over the compact subsets of $G$, and $U$ over the  identity neighbourhoods of $G$.  As noted in \cite[Appendix A]{Caprace.Monod:11}, $\Aut(G)$ with this topology is Polish (assuming that  $G$ is countably based). For a t.d.l.c.\ group $G$, the following  shows that      $\Aut(G)$   can be viewed as    the automorphism group of a countable structure.
\begin{prop} \label{prop: Braconnier} Let $G$ be a t.d.l.c.\ group. The group $\Aut(G)$ with the Braconnier topology is topologically isomorphic to $\Aut(\+ W(G))$, via the  map $\Gamma$  that sends  $\aaa \in \Aut(G)$ to its action on $\+ W(G)$, that is,  $B \mapsto \aaa(B)$.  \end{prop}
\begin{proof} It is clear that  $\Gamma$ is an injective  group homomorphism. 
		To show that  $\Gamma$ is continuous, consider an identity neighbourhood of $\Aut(\+ W(G))$, which we may assume to have  the form $\{\beta \colon \, \beta(A_i)= A_i, i = 1, \ldots,  n\}$ where $A_i \in \+ W(G)$.  Let $U = \bigcap_i A_i A_i^{-1}$ and $K= \bigcup A_i$. Then $\aaa \in \mathfrak A(K,U)$ implies $\aaa(A_i) = A_i$ for each $i$. 
	
	It remains to show that $\Gamma $ is onto: then, 
	since  $\Aut(G)$ and $\Aut(\+ W(G))$ are Polish, $\Gamma $ is open (see e.g. \cite[Th.\ 2.3.3]{Gao:09}), and hence a topological group isomorphism. For  a  direct argument  that $\Gamma$ is onto, see \cite[Prop 9.5]{LogicBlog:22}. Given the work already done, here it is easier, however, to derive ontoness using  the isomorphism $\Phi \colon G \to \wt G$ defined in the proof of the  implication ``$\RA $" of~\cref{thm:main}.  The map $\Phi$  can be defined in the absence of algorithmic considerations. (We can now choose  $D= \NN$, ignoring the trivial case that  $G$ is finite. Furthermore, in the argument below we may ignore the fact that the elements of $\wt G$ are not literally permutations of $\NN$.)

	Let $\Phi^* \colon \Aut (G) \to \Aut(\wt G)$ be the isomorphism induced by $\Phi \colon G \to \wt G$, namely, 
	\bc $\Phi^*(\aaa) = \Phi \circ \aaa \circ \Phi^{-1}$. \ec 
	Let $\Theta \colon \Aut(\+ W(G)) \to \Aut(\wt G)$ be the 1-1 map given by \bc $ \Theta(\beta)(p) = \beta \circ  p \circ \beta^{-1}$, \ec  for any $\beta \in  \Aut(\+ W(G))$ and $p \in \wt G$.  It is clear that $\Theta(\beta)(p) \in \wt G$, and $\Theta(\beta) $ is a continuous automorphism of $\wt G$. The following diagram summarizes the maps.
	\[\xymatrix{
		\Aut(\wt G)&  \\
		\Aut(G) \ar[u]^{ \Phi^*}      \ar[r]_{\Gamma}                     & \Aut(\+W)\ar[ul]^\Theta }\]
	To verify that $\Gamma$ is onto, we show that $ \Theta \circ \Gamma =\Phi^* $. 
	We use letters $p,q,r$ for elements of $\wt G$.  Let  $p \in \wt G$, $\aaa \in \Aut(G)$ and   $C\in \+ W(G)$ be arbitrary. 
	Firstly, note that 	$ \Theta (\Gamma (\aaa))$ is the map sending $p$ to $q$ where $q(C)= \aaa(p(\aaa^{-1}(C)))$.  
	Secondly, recall from the proof   of~\cref{thm:main} that $\{\Phi^{-1}(p)\}= \bigcap_U p(U)$ where $U$ ranges through the compact open subgroups of~$G$. So $\Phi^*(\aaa)(p)$ is the map $r$ sending $C$ to $[\bigcap_U \aaa(pU)]C$.
	
We verify  that $q=r$.   If $B$ is a left coset of a subgroup $W$,  we have $p(B)= [ \bigcap_U pU] B$:  both sides are left cosets of $W$, and we have  $p(B)= p(V)B \supseteq [ \bigcap_U pU] B$ where  $B$ is a right coset of a subgroup $V$. Letting $B= \aaa^{-1}(C)$ yields   \bc $p(\aaa^{-1}(C) )=[ \bigcap_U pU] \aaa^{-1}(C)$. \ec  Applying $\aaa$ to both sides of this equation   shows that      $q(C)=r(C)$.
	\end{proof}
	

\section{Computable functions on the set of paths of computable trees} \label{s:comp notions}

This section provides   preliminaries on  computability of  functions that are defined on the set of paths of computable trees without leaves. These preliminaries will be used in Section~\ref{s:Baire} to introduce    computable Baire presentations of  t.d.l.c.\ groups, as well as  in much   of the rest of the paper.
Most of the  content of  this section can either be seen as a  special   case of known results  in abstract computable topology, or can be   derived from such results. These results stem from the study of computably compact metric spaces; some of them need to be extrapolated to the locally compact setting. They can be found  in the recent surveys \cite{Iljazovic.Kihara:21,EffedSurvey}. However, with the reader in mind who has little background in computability or  computable topology, we prefer to provide intuition and elementary proofs for     the  computability notions and results that are needed later~on, rather than referring to such more general results.  

Let  $T$ be a computable subtree of $\NN^*$ without leaves.    To define that   a function which takes arguments from   the potentially  uncountable domain   $[T]$  is computable,   one descends to the countable domain of strings on $T$, where   the usual computability notions   work.  The first definition,  \ref{def:computable 1} below,   will apply   when we show  in~\cref{cor:delta} that  the modular function on a computable presentation of a  t.d.l.c.\ group  is computable.  As a further  example, in~\cref{fact:mg} we will show that given a computable presentation of a t.d.l.c.\ group via a closed subgroup of $\S$,   the function $m(g,V)$ related to the scale function, defined in the introduction,  is computable. For the notion of partial computable functions on $\NN$, see the remark after~\cref{def:computable}. We assume   some standard effective encoding of strings by numbers.

We define what it means to say that $\Phi: [T] \times \NN \to \NN$ is computable. The intuition    is  that  a  partial computable function $P_\Phi$     represents the behaviour of 
 a so-called  \emph{oracle  Turing machine} computing $\Phi$.
 Given $f \in [T]$,  the machine has the  list of the values $f(0), f(1), f(2), \ldots $ written on a special  ``oracle" tape, and attempts to find the value  $\Phi(f, w)$ via queries of the type ``what is  the value of $f(q)$?", where $q$ is determined during the computation. If  a string $\sss $ is an initial segment of $ f$ and $P_\Phi(\sss,w) = m$, then with the answers  given by $\sss$,  the machine can find this value $m$. Clearly any extension $\tau$ of $\sss$ will then yield the same value: this is Condition~(1). Condition (2) says that sufficiently many queries will lead to an answer. By $f\uhr n$ one denotes the length $n$  initial segment of $f$. 

\begin{definition}  \label{def:computable 1} (a) A function $\Phi: [T] \times \NN \to \NN$ is computable if there is a partial computable    function $P_\Phi $ defined on a subset of $T\times \NN$, with values in $\NN$, such that  \bi \item[(1)] if $\sss \prec \tau\in T$, then $P_\Phi (\sss, n) = k $   implies $P_\Phi (\tau, n) = k$; 
\item[(2)] If $f \in [T]$, then 
$\Phi(f, w)= k $ iff there is an $n$ such that  $P_\Phi(f\uhr n,w) =k$. \ei 

\n (b) A function $\Psi: [T]   \to \NN$ is computable if the function $\Phi(f, n)  = \Psi(f)$ (which ignores the number input)  is computable in the sense above. \end{definition}  By (2),     $\Phi $   is     continuous (where $\NN$ carries the discrete topology). 
 
  \begin{example} Let $T= \NN^*$. The function  $\Phi(f,n)= \sum_{i=0}^n f(i)$ is computable. The oracle Turing machine,  with $f$ ``written" on the oracle tape,  queries the values of  $f(i)$ for $i=0, \ldots,n$ one by one and adds them.  \end{example}

We next rephrase \cref{def:computable 1}  in a form that will matter when we discuss computability for  groups where the domain can be of the form $[T]$ for any computable tree without leaves. We will need to express that the group operations are computable. In the setting of closed subgroups of $\S$, this is  the case automatically; for the  inversion operation, it   is due to the particular presentation of $\S$ we chose (\cref{ex:S} below). 

 Given a computable tree $R$ without leaves, there is a computable tree without leaves $T= R^2$  such that $[T]$ can be canonically identified with $[R] \times [R]$:  namely, $T$ consists of the initial segments of strings $\sss_0 \oplus \sss_1$ where $\sss_i \in R$ have the same length.  Given this identification, a path $f$ on $T$ can be written as $\la f_0, f_1 \ra $ where $f_i \in [R]$. 

 \begin{definition}[Computable functions on the set of paths] \label{def:Comp fcn on tree}  Let $R,S,T$ be computable trees without leaves.  A function $\Phi\colon [T] \to [S] $ is called \emph{computable} if the function  $\wt \Phi: [T] \times \NN \to \NN$ given by $\wt \Phi(g, n)= \Phi(g)(n)$ is computable   in the sense of  \cref{def:computable 1}. 
A function $\Psi\colon [R] \times [R] \to [S] $ is   \emph{computable} if the function $\hat \Psi \colon [R^2] \to [S]$ is computable, where $\hat \Psi(\la f_0, f_1 \ra)= \Psi(f_0, f_1)$.    \end{definition}

  It is intuitively clear, and not hard to check from the definitions, that the composition of computable unary functions on sets of paths is again computable.  
  The topology on the  space $[T]$ is induced by a  complete metric:  for instance, for $f \neq g$, let $d(f,g) = 1/n$ where $n $ is least such that $f(n)\neq g(n)$. Taking the usual ``$\epsilon, \delta$" definition, one sees that   $\Phi\colon [T] \to [S] $ is continuous at $f$ if for each~$k$ there is~$n$ such that for each $g \in [T]$, if  $f\uhr n = g\uhr n$ then  $\Phi(f)\uhr k = \Phi(g) \uhr k$. The definitions above can be seen as algorithmic versions of continuity, where one can compute the output $\Phi(f)$ up to $k$ from a sufficiently long part of the input $f$.


\begin{notation} {\rm Suppose that  a function $P$ is as in \ref{def:computable 1} above, and suppose that for each $\sss\in T$, the value $P(\sss, n)$ is only defined for finitely many $n$. We write $P(\sss)= \rho$ if $\rho \in \NN^*$ is the string of maximal length such that  $P(\sss, n)= \rho(n)$ for each $n < |\rho|$.}
 \end{notation} 

Clearly   $P$ as a function on strings is monotonic: if $\sss \preceq \rho$ then $P(\sss) \preceq P(\rho)$. While  $P$   is not partial computable in general, we can think of $P(\sss) $ as the eventual  output of a finite process   depending on   computations $P(\sss, k)$ converging for larger and larger $k$. 
  \begin{remark} \label{rem:monot} Suppose instead we \emph{start} by defining a monotonic  partial computable function $P$ on strings  such that   $\Psi(f)= \bigcup_n \{ P(f\uhr n) \colon \, f\uhr n \in \dom  (P)\}$.  Then defining the partial computable function $P(\sss, n)$ as $P(\sss)(n)$ for any $n< P(\sss)$ shows that $\Psi$ is computable  in the sense of   \ref{def:computable 1}.  In the algebraic applications below such as \cref{prop: SL2}, we will usually proceed in this way. In fact,  the domain of $P$ will usually be  a computable set given by a simple combinatorial condition.  So the function $P$ is  computable in the sense of \cref{def:computable}. \end{remark}
  
%
%
Recall that in Section~\ref{ss:Sinf} we introduced a special way of presenting the elements of  $\S$ as paths $f = f_0 \oplus f_1$ on  a directed tree $\Tree \S$  that keep track of both the permutation $f_0$ and its inverse $f_1$.    For $f,g \in [\Tree \S]$, we defined the group operations of $\S$ by  $f^{-1} = f_1 \oplus f_0$ and 
   $g  f    = (g_0 \circ f_0)    \oplus (f_1 \circ g_1  )$. 
\begin{fact} \label{ex:S} The    group operations  of $\S$ are computable. \end{fact}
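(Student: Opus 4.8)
The plan is to verify \cref{ex:S} directly against \cref{def:Comp fcn on tree}, by exhibiting in each case a partial computable monotonic function $P$ on (pairs of) strings of $\Tree \S$ whose union along a path yields the stated operation. Throughout, a string in $\Tree \S$ of length $2N$ has the form $\sigma \oplus \tau$ with $\sigma,\tau$ of length $N$, and I will freely read off the two interleaved components. The feature I will exploit is that a path $f = f_0 \oplus f_1 \in [\Tree \S]$ supplies both the permutation $f_0$ and its inverse $f_1$ as forward-computable data, so that no operation ever requires inverting a partial function ``on the fly''.

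For the inverse operation $\Phi(f) = f^{-1} = f_1 \oplus f_0$, I would set $P_\Phi(\sigma \oplus \tau) = \tau \oplus \sigma$ on even-length strings of $\Tree \S$, and leave $P_\Phi$ undefined on odd-length strings. Since the defining conditions of $\Tree \S$ are symmetric under exchanging the two components, $\tau \oplus \sigma$ again lies in $\Tree \S$, so $P_\Phi$ has values in $\Tree \S$; it is plainly computable and monotonic. Taking the union over all $n$ of $P_\Phi(f \uhr n)$ yields exactly $f_1 \oplus f_0 = f^{-1}$. This case is essentially immediate.

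For the group operation $\Psi(g,f) = gf = (g_0 \circ f_0) \oplus (f_1 \circ g_1)$, I would define $P_\Psi$ on pairs $(g_0 \oplus g_1,\, f_0 \oplus f_1)$ of equal length $2N$ as follows. From this data one can evaluate $(g_0 \circ f_0)(i)$ for exactly those $i < N$ with $f_0(i) < N$, and $(f_1 \circ g_1)(i)$ for exactly those $i < N$ with $g_1(i) < N$; note that computing the second component uses the supplied inverses $f_1, g_1$ directly, so no inversion is performed. Let $P_\Psi$ output the longest initial segment of the alternating string $(g_0 \circ f_0) \oplus (f_1 \circ g_1)$ all of whose entries are determined in this way. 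Being a prefix of the genuine path $gf$, this output lies in $\Tree \S$.

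It remains to check the three requirements, and here is where the only real (though mild) work lies. The output is computable from the input by the description above. For monotonicity, observe that once an entry of $(g_0 \circ f_0)$ or $(f_1 \circ g_1)$ is determined by $(g \uhr{2N}, f\uhr{2N})$, its value is fixed and reappears for every longer pair, so the set of determined positions only grows and the longest determined initial segment can only extend. Finally, for the union: given any coordinate $i$, since $f_0$ and $g_1$ are total permutations of $\NN$, for all sufficiently large $N$ we have $f_0(i) < N$ and $g_1(i) < N$, so both $(gf)_0(i)$ and $(gf)_1(i)$ are eventually determined; hence $\bigcup_n P_\Psi(g\uhr n, f\uhr n) = (g_0 \circ f_0) \oplus (f_1 \circ g_1) = gf$. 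The point requiring care is precisely this composition step, where evaluating $g_0 \circ f_0$ forces one to wait until $f_0(i)$ falls inside the currently known range of $g_0$; the fact that this always eventually happens, together with the direct availability of the inverse components, is what makes $\Psi$ computable.
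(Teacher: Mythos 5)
Your proposal is correct and follows essentially the same route as the paper: swap the interleaved components for inversion, and for the product output the longest initial segment of $(g_0\circ f_0)\oplus(f_1\circ g_1)$ whose entries are already determined by the given finite data. The paper states these two partial computable functions and omits the verification, which you supply (monotonicity via persistence of determined entries, totality via $f_0(i),g_1(i)<N$ for large $N$).
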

\begin{proof}  We  define partial computable monotonic  functions $P_1$ and $P_2$ on $\Tree \S$ and  $\Tree \S^2$, respectively, according to \cref{rem:monot}. 
For the inverse, let $P_1(\sss_0 \oplus \sss_1) = \sss_1 \oplus \sss_0$, where $|\sss_0|=|\sss_1|$. For the binary group operation, given strings $\tau=\tau_0 \oplus \tau_1$ and $\sss=\sss_0 \oplus \sss_1$    of the same, even  length, let $t$ be  greatest  such that for each $r<t$,   $ \rho_0(r):=\tau_0(\sss_0(r))$ and $\rho_1(r):=\sss_1(\tau_1(s))$ are defined. Let $P_2(\tau, \sss) = \rho_0\oplus \rho_1$,    a string of length~$2t$.      
   \end{proof}
 The following lemma is an algorithmic version of the fact  that each continuous function defined on a compact space is uniformly continuous.   This is  well-known in computable analysis. 
 Here we restrict ourselves to the setting of paths spaces $[K]$ that are  compact in an effective way (as given by the bound $H$ below). 
  \begin{lemma} \label{lem:lahm} Suppose that $K$ and $ S$ are computable trees without leaves. Suppose further  that there is a computable function $H$ such that $\sss(i) < H(i)$  for each $\sss \in K$ and $i < \sssl$.
 Let   $\Phi\colon [K]\to [S] $ be computable via a partial computable function~$P\colon K \times \NN \to \NN$. 
\bi \item[(i)] There is a computable function $g$ as follows:  for each $n$, 
\bc $\forall \rho \in K \forall i< n \,  [|\rho| = g(n) \to  P(\rho, i) \text{ is defined}]$. \ec
%
%
 Moreover, $g$ is obtained uniformly in $K$ and $h$.
 \item[(ii)] If $\Phi$ is a bijection then $\Phi^{-1}$ is computable, via a partial computable function that is obtained uniformly in $K,H,S$ and $P$. \ei \end{lemma}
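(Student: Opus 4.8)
The plan is to reduce both parts to the effective compactness of $[K]$ furnished by $H$, after first replacing $P_\Phi$ by a version that is \emph{total} on $K$. For the normalization, define $\widehat P$ on $\rho \in K$ by running $P_\Phi(\sss)$ for $|\rho|$ steps for every $\sss \preceq \rho$ and setting $\widehat P(\rho)$ to be the longest output so obtained; by monotonicity of $P_\Phi$ all outputs found are pairwise $\preceq$-comparable, so this is well defined, and $\widehat P$ is total computable on $K$, monotone, and still satisfies $\Phi(f)=\bigcup_n \widehat P(f\uhr n)$. I would remark that we may thus assume $P_\Phi$ is total and monotone on $K$, so that $|P_\Phi(\rho)|$ is always meaningful.

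For part (i), fix $n$ and consider the decidable set $D_n=\{\sss\in K\colon |P_\Phi(\sss)|>n\}$. Every path $f\in[K]$ meets $D_n$, since $\Phi(f)\in[S]$ is infinite. Because $H$ bounds the branching, $K$ is finitely branching, so by K\"onig's lemma the minimal elements of $D_n$ form a finite antichain; hence there is a level $\ell^*$ such that every $\rho\in K$ of length $\ell^*$ extends some minimal element of $D_n$, and then monotonicity gives $|P_\Phi(\rho)|>n$. To compute $g(n)$ I would search $\ell=0,1,2,\dots$, at each stage testing the (finitely many, and computably listable via $H$) strings $\rho\in K$ of length $\ell$ and outputting the first $\ell$ for which $|P_\Phi(\rho)|>n$ holds for all of them; this terminates by $\ell\le\ell^*$ and is plainly uniform in $K,H,P_\Phi$. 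I would also arrange $g$ to be nondecreasing with $g(n)\ge n$.

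For part (ii) the key observation is that for distinct $\rho,\rho'\in K$ of the same length $m$, injectivity of $\Phi$ yields $\Phi([\rho]_K)\cap\Phi([\rho']_K)=\Phi([\rho]_K\cap[\rho']_K)=\ES$, and each $\Phi([\rho]_K)$ is compact. I would define the finite, computable prefix set $C_k(\rho)=\{P_\Phi(\eta)\uhr k\colon \eta\in K,\ \rho\preceq\eta,\ |\eta|=\max(g(k),m)\}$, which by part (i) equals $\{y\uhr k\colon y\in\Phi([\rho]_K)\}$. Using the complete metric $d(y,y')=1/(m+1)$ where $m$ is the first point of disagreement, the finitely many pairwise disjoint compact images $\Phi([\rho]_K)$ lie at pairwise positive distance; hence for $k$ large enough the sets $C_k(\rho)$, over $|\rho|=m$, become pairwise disjoint, and such a $k=k(m)$ can be found by a search (disjointness is decidable since each $C_k(\rho)$ is a finite computable set). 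Since $\Phi$ is onto, every length-$k(m)$ prefix of a path of $[S]$ then lies in exactly one $C_{k(m)}(\rho)$, and that $\rho$ is $(\Phi^{-1}(y))\uhr m$ for the paths $y$ through it.

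Finally I would assemble $P_{\Phi^{-1}}$: on input $\delta\in S$, take the largest $m$ with $k(m)\le|\delta|$, find the unique $\rho$ with $\delta\uhr{k(m)}\in C_{k(m)}(\rho)$, and output $\rho$. This partial function is computable, and monotone because for increasing $m$ the resulting $\rho$'s are nested initial segments of a common preimage; and $\bigcup_\ell P_{\Phi^{-1}}(y\uhr\ell)=\Phi^{-1}(y)$ for each $y\in[S]$, with all data computed uniformly from $K,H,S,P_\Phi$ (and $g$ from part (i), itself uniform). I expect the main obstacle to be precisely this step of part (ii): converting the purely topological separation of the disjoint compact images into a \emph{computable}, uniformly bounded level $k(m)$, which is where injectivity, surjectivity, and the effective compactness supplied by part (i) must all be combined.
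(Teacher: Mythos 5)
Your proof is correct and follows essentially the same route as the paper's: part (i) is the same K\"onig's-lemma argument (an infinite finitely branching subtree of strings with short output would yield a path where $\Phi$ is undefined) followed by an exhaustive search using $H$, and part (ii) computes the same "inverse modulus" (the paper's $h(s)$, your $k(m)$) by searching for a level at which the prefix-sets of the images of the disjoint compact cones separate, then reads off the preimage. If anything, you are more careful than the paper on two points it glosses over: totalizing $P_\Phi$ so that $|P_\Phi(\rho)|$ is always defined, and justifying why the search for the modulus in (ii) terminates and is computable.
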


 Intuitively, given the metric on $[K]$ discussed above,  the   function $g$ in (i)   computes the ``$\delta$" in the definition of uniform continuity from the ``$\epsilon$": if $\delta= 1/n$ we have $\epsilon = 1/g(n)$. In computability terms, this means that, to obtain $n+1$ output symbols,  we need at most~$g(n)$ input symbols. 
\begin{proof} (i)  Given $i \in \NN$, 
consider the  subtree of $K$ consisting of the  strings $\rho$   such that $P(\rho, i)$ is undefined. If  this subtree is infinite, then  there is a  path  $f \in [K]$ such that  $\Phi(f)(i)$ is undefined, contradiction. Thus for each $n$ a possible  value $g(n)$ as above exists. 
Using the hypotheses on $K$, given $n$ one can search for the least such value  and output it as   $g(n)$. 

\smallskip

\n (ii) To show  $\Phi^{-1}$ is computable, we define a partial computable function $Q \colon [S] \times \NN \to \NN$. Clearly $[S]$ is compact and  $\Phi^{-1}$ is continuous.   Hence    there  is a computable function $h$ with $h(t) \ge t$ for each $t$ as follows:  given $s \in \NN$,   for each $\rho \in K$ of length $g(h(s))$, the values  $P(\rho,i)$ for $i< h(s)$ together determine $\rho\uhr s$. For a string $\beta \in S$ of length $h(s)$ and $k<s$, define $Q(\beta,k)=  \rho(k)$ where  $\rho\in K$ is a string of length $g(h(s))$ with $P(\rho,i) = \beta(i)$ for each $i< h(s)$. It is easy to verify that $Q$ shows that $\Phi^{-1}$ is computable, and that $Q$ is obtained uniformly in the given data.  For a proof in a more general setting see \cite[Thm.\ 4.2.48]{Downey.Melnikov:book}.
\end{proof}
The rest of this section  discusses computable functions on the set of paths of  c.l.c.\ trees.  Given such a   tree $T$, recall from Definition\ \ref{defn: str index compact}  that by $\+ K_u$ we denote the   compact open subset of $[T]$  with code number $u\in \NN$. That is, $u$ is the strong index for a set of strings $\{\aaa_1, \ldots, \aaa_r \} \sub T$ such that $\+ K_u= \bigcup_{i\le r} [\aaa_i]_T$.  If there is more than one tree under discussion, we will write    $\+ K_u^T$ for the subset of $[T]$ with code number~$u$.  
First, we  establish a useful interaction between computable functions   $[T]\to [S]$ and  such sets.  This   generalizes Lemma~\ref{lem: comp index tree}(ii) where $\Phi$ is the identity function.
\begin{lemma} \label{lem:lahm2} Let $T$ and $S$ be c.l.c.\  trees. Suppose  a function $\Phi\colon [T]\to [S] $ is computable via a partial computable function $P_\Phi $. Given  code numbers $u, w$, one can decide whether $\Phi(\+ K^T_u) \sub \+ K^S_w$. 
\end{lemma}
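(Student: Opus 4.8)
The plan is to reduce the question to the effectively compact setting and then apply the uniform modulus of continuity furnished by \cref{lem:lahm}(i). First I would dispatch the degenerate cases: whether $\+ K^T_u = \ES$ and whether $\+ K^S_w = \ES$ are both decidable directly from the code numbers, and in either case the inclusion is trivial (it holds iff $\+ K^T_u = \ES$, using that $\Phi$ is total on paths). So from now on assume both sets are nonempty.

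Next I would realize $\+ K^T_u$ as a path space. Writing $u = \{\eta_1, \dots, \eta_r\}$ for the coded finite set of strings — each with $[\eta_j]_T$ compact, since $u$ is a code number — set
\bc $K = \{\rho \in T \colon \exists j\,(\rho \preceq \eta_j \lor \eta_j \preceq \rho)\}$. \ec
Then $K$ is a computable subtree of $T$ with $[K] = \bigcup_j [\eta_j]_T = \+ K^T_u$, and it has no leaves (below an $\eta_j$ it continues toward $\eta_j$, while at or above $\eta_j$ it continues within the leafless tree $T$, staying above $\eta_j$). It is moreover effectively compact: applying \cref{def:comploccompact}(3) to the compact cones $[\eta_j]_T$, the binary bound $H$ of $T$ yields a computable unary bound $H_K(i) = 1 + \max_j H(\eta_j, i)$ with $\rho(i) < H_K(i)$ for all $\rho \in K$ and $i < |\rho|$. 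Since $\Phi$ is total on $[T] \supseteq [K]$, its restriction to $[K]$ is computable via $P_\Phi$ (with domain intersected with $K$).

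Now I would feed $K$ (with bound $H_K$) and $S$ into \cref{lem:lahm}(i) to obtain a computable modulus $g$ with $|P_\Phi(\rho)| > n$ for every $\rho \in K$ of length $g(n)$. Set $N = \max\{|\gamma| \colon \gamma \in w\}$, so that $f \in \+ K^S_w$ iff $f \uhr N$ extends some $\gamma \in w$. For $\rho \in K$ of length $g(N)$ the value $P_\Phi(\rho)$ is defined and has length $> N$, hence by monotonicity $\Phi(f)\uhr N = P_\Phi(\rho)\uhr N$ for every path $f \succeq \rho$. Since every $f \in [K] = \+ K^T_u$ extends some such $\rho$, and there are only finitely many $\rho \in K$ of length $g(N)$ (all enumerable via $H_K$), the decision procedure is: compute $P_\Phi(\rho)\uhr N$ for each such $\rho$, and answer ``yes'' exactly when each of these strings extends some $\gamma \in w$. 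A failing $\rho$ exhibits a path $f \succeq \rho$ with $\Phi(f)\uhr N$ avoiding all $\gamma \in w$, i.e. $\Phi(f) \notin \+ K^S_w$, which gives correctness of the ``no'' answers.

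The one genuine obstacle is producing the \emph{uniform} modulus $g$, which requires the effective compactness of $\+ K^T_u$; this is precisely what the passage to the effectively branching tree $K$ supplies, allowing \cref{lem:lahm}(i) to apply. After that the problem collapses to a bounded finite search, generalizing \cref{lem: comp index tree}(ii), which is the case $\Phi = \mathrm{id}$. The only steps demanding care are the extraction of the unary bound $H_K$ for $K$ from the binary bound $H$ of $T$, and the bookkeeping for the degenerate cases dispatched at the outset.
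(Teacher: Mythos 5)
Your proposal is correct and follows essentially the same route as the paper's proof: both pass to the subtree $K$ of prefixes and extensions of the strings coded by $u$, extract a uniform branching bound for $K$ from the c.l.c.\ bound of $T$, apply \cref{lem:lahm}(i) to get a modulus $g$, and reduce the inclusion to a finite check on the strings of $K$ at level $g(N)$. Your write-up merely adds details the paper leaves implicit (the explicit bound $H_K$, the degenerate empty cases, and the correctness of the ``no'' answers).
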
 
\begin{proof}   Suppose that  $u$ is a strong index for the set of strings $\{\aaa_1, \ldots, \aaa_r \} \sub T$, and $w$ is a strong index for the set of strings $\{\beta_1, \ldots, \beta_s \} \sub S$.  Let $K$ be the subtree of $T$  consisting of the prefixes, or extensions,  of some $\aaa_i$. Clearly one can uniformly obtain   a computable bound  $H$ for this $K$ as in Lemma~\ref{lem:lahm}. Let $n= \max_i |\beta_i|$. Let $N= g(n)$ be the length computed  from $n$ through  that Lemma. Then $\Phi(\+ K^T_u) \sub \+ K^S_w$ if and only if  for each $\aaa \in K$ of length $N$, there is an $i$ such that   $P_\Phi(\aaa) \succeq \beta_i$. By   Lemma~\ref{lem:lahm}, this condition is decidable. 
\end{proof} 


To prove \cref{prop: SL2} below, we will need a criterion on whether,  given  a computable subtree   $S$  of a c.l.c.\ tree $T$ (where $S$ potentially has   leaves),  the maximally pruned subtree  of $S$  with the same set of paths is computable.
 \begin{prop}  \label{prop: prune} Let $T$ be a c.l.c.\ tree such that only the root is infinitely branching. Let $S $ be a computable subtree of $T$, and suppose that there is a uniformly computable dense sequence $(f_i)\sN i$ in $[S]$. Then the tree $\wt S= \{\sss  \colon [\sss]_S \neq \ES\}$ is   decidable. (It follows that $\wt S$ is c.l.c. Of course, $[\wt S] = [S]$.)  \end{prop}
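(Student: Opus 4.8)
The plan is to decide membership in $\wt S$ by dovetailing two semi-decision procedures: one that halts exactly when $[\sss]_S\neq\ES$, and one that halts exactly when $[\sss]_S=\ES$. Since for each $\sss$ precisely one of these holds, running both in parallel gives an algorithm that always terminates with the correct verdict. I would first dispose of the empty string directly: $[\ES]_S=[S]\ni f_0$, so the root belongs to $\wt S$. For the remaining strings assume $\len{\sss}\ge 1$; then, because only the root of $T$ is infinitely branching, $[\sss]_T$ is compact, and so the computable bound $H(\sss,\cdot)$ from condition (3) of \cref{def:comploccompact} governs all extensions of $\sss$ in $T$.

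\emph{Positive side (density).} The cone $[\sss]_S$ is relatively open in $[S]$, so if it is nonempty it must meet the dense set, yielding an $i$ with $\sss\prec f_i$; conversely any $f_i$ with $\sss\prec f_i$ lies in $[\sss]_S$. Hence $\sss\in\wt S \LR \exists i\,(\sss\prec f_i)$, and the right-hand side is semi-decidable: I would search over $i$, testing the decidable relation $\sss\prec f_i$ by computing $f_i(0),\dots,f_i(\len{\sss}-1)$ using the uniform computability of $(f_i)$.

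\emph{Negative side (compactness).} Let $S_\sss=\{\tau\in S:\sss\preceq\tau\}$, a subtree rooted at $\sss$ whose extensions are bounded by $H(\sss,\cdot)$ and hence finitely branching. By König's lemma, $[\sss]_S=\ES$ (no infinite path through $S_\sss$) iff $S_\sss$ is finite iff some level $N$ of $S_\sss$ is empty. For a fixed $N$ this last condition is decidable: enumerate the finitely many $\tau$ with $\sss\preceq\tau$, $\len{\tau}=N$, and $\tau(i)\le H(\sss,i)$ for $\len{\sss}\le i<N$, keep those lying in the (decidable) sets $T$ and $S$, and check whether none remain. Thus $\sss\notin\wt S \LR \exists N\,(S_\sss \text{ is empty at level } N)$, which is semi-decidable.

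Dovetailing the two searches, exactly one halts: if $[\sss]_S\neq\ES$ the first halts while the second runs forever (arbitrarily long strings survive in $S_\sss$), and if $[\sss]_S=\ES$ the second halts while the first runs forever. This decides $\wt S$. The delicate point, and the reason both hypotheses are genuinely needed, is that the two certificates are complementary: density supplies the effective witness of nonemptiness, while the branching bound $H$ (available thanks to local compactness and the restriction that only the root branches infinitely) supplies the effective witness of finiteness, and neither half succeeds without its hypothesis. The parenthetical claims are then routine: $[\wt S]=[S]$ because pruning deletes only strings with empty cone, and $\wt S$ inherits from $T$ both the bound $H$ and the decidability of compactness, so $\wt S$ is again c.l.c.
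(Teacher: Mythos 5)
Your proof is correct and follows essentially the same route as the paper's: the root is handled separately, and for nonempty $\sss$ one searches in parallel for a witness $f_i \succ \sss$ (using density of the computable sequence) or for a level of the tree above $\sss$ with no surviving extensions (using the branching bound $H$ from local compactness together with K\"onig's lemma), exactly one of which must be found. The paper merely compresses the dovetailing into a single search for the least $t$ satisfying the disjunction of the two conditions.
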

\begin{proof}   Given a   string $\sss \in T$,  if $\sss = \ES$ then $\sss \in \wt S$. Assuming $\sss \neq \ES$, we can compute the least   $t\in \NN$ such that either $\sss \prec f_t$, or $\rho \not \in S$ for each $\rho \in T$ of length $t$ such that $ \rho \succeq \sss$; the latter condition can be decided by the hypotheses on $T$. Clearly $\sss \in \wt S$ iff the former condition holds. 
\end{proof} 

\section{Defining computably t.d.l.c.\  groups via  Baire presentations} \label{s:Baire} 
   This section spells out  the third type of computable presentations of t.d.l.c.\ groups   described in Section~\ref{s: types of presentations}, Type B, which generalises Type S. We call them  \emph{computable Baire~presentations}. 
   
    It is well-known that each  $0$-dimensional Polish space  $X$ is homeomorphic to the path space $[T]$ for some tree $T \sub \NN^*$; see  \cite[I.7.8]{Kechris:95}. Clearly  $X$ is locally compact if and only if  for each $f \in [T]$ there is an $n$ such that the tree above $f\uhr n$ is finitely branching.  Considering the set of prefix minimal strings $\sss \in T$ such that $[\sss]_T$ is compact, it is easy to see that we may replace $T$ by a tree such that only the root can have infinitely many successors. So in our  effective setting, it is natural to work with a domain of the presentation that has   the form $[T]$ for a  c.l.c.\  tree~$T$, and   require that the group operations on $[T]$ be computable. (In fact, while  the previous types of computable presentation were group-specific, in the present setting the same approach would work for other types of algebraic structure defined on $[T]$.)   We will show in Thm.\ \ref{thm:main2} that the resulting notion of computably t.d.l.c.\ group   is  equivalent to the previous ones. 
 
Despite this equivalence, as presentations, computable Baire presentations   offer     more flexibility  than the first type (Type S), which   relied on   closed subgroups of~$\S$. This will be evidenced by our proofs  that some algebraic groups over local fields, such as  $\SL_n(\QQ_p)$,  are computable.

 \begin{definition}\label{def:main1}  
A \emph{computable Baire~presentation}  
    is a topological group  of  the form  $H= ([T], \Op, \Inv)$ such that
  \begin{enumerate} \item  $T$ is computably   locally compact as defined in~\ref{def:comploccompact};
  \item  $\Op\colon [T] \times [T] \to [T]$ and $\Inv \colon [T] \to [T]$ are computable.
    \end{enumerate}
    We say that a t.d.l.c.\ group $G$ is \emph{computably t.d.l.c.}  (via  a Baire presentation) if  $G \cong H$ for such a group $H$.    \end{definition}

We verify    next that for profinite groups and  for  countable discrete groups, our notion of  computable presentability (type B) for t.d.l.c.\ groups coincides with the notions established in the literature.   First we provide  the formal definition of a computable profinite presentation. 
 \begin{definition}[La Roche \cite{LaRoche:81}, Smith \cite{Smith:81}] \label{def:profinite}
A computable profinite presentation of a profinite group $G$ is a uniformly computable sequence of strong indices of finite groups $A_i$ and finite surjective maps 
$\phi_i:  A_i \rightarrow A_{i-1}$ such that $G = \varprojlim_i (A_i, \phi_i)$. 
\end{definition}

\begin{prop} \label{ex:prof Baire}  Given a computable profinite presentation of $G$, one can effectively obtain a computable Baire presentation of $G$. \end{prop}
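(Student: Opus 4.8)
The plan is to build the tree $T$ directly out of the inverse system, so that $[T]$ is literally the inverse limit $\varprojlim_i(A_i,\phi_i)$ carrying its product topology. Since each $A_i$ is given by a strong index, I regard $A_i$ as a concrete finite subset of $\NN$ with uniformly computable group operation, and the $\phi_i$ as uniformly computable epimorphisms. I would define $T$ to be the set of strings $\sigma$ such that $\sigma(i)\in A_i$ for all $i<|\sigma|$ and $\phi_i(\sigma(i))=\sigma(i-1)$ for all $1\le i<|\sigma|$. Membership in $T$ is decidable uniformly in the given presentation, $T$ is closed under initial segments, and $T$ has no leaves because the $\phi_i$ are surjective: any $\sigma$ of length $n+1$ extends by choosing any $\phi_{n+1}$-preimage of $\sigma(n)$. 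By construction $[T]$ coincides, as a set and as a topological space, with $\varprojlim_i(A_i,\phi_i)=G$, since a cone $[\sigma]_T$ fixes finitely many coordinates and so matches a basic open set of the product topology.

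Next I would check that $T$ is c.l.c.\ in the sense of Definition~\ref{def:comploccompact}. The entries at level $i$ all lie in the finite set $A_i$, so every level of $T$ is finite; hence $[T]$ is compact, and in particular locally compact, giving condition~(1). Because the whole space is compact, the set $\{\sigma\in T:[\sigma]_T\text{ is compact}\}$ equals $T$ and is decidable, giving condition~(2). For condition~(3) I can take $H(\sigma,i)=\max(A_i)$, which is computable uniformly and bounds $\rho(i)$ for every $\rho\in T$ since $\rho(i)\in A_i$.

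It then remains to verify that the coordinatewise group operations are computable in the sense of Definition~\ref{def:Comp fcn on tree}. For the product I would let $P_{\Op}(\sigma,\tau)$, defined on pairs of equal length $n$, be the string $\rho$ with $\rho(i)=\sigma(i)\cdot_i\tau(i)$ computed in $A_i$; for the inverse I would let $P_{\Inv}(\sigma)(i)=\sigma(i)^{-1}$ in $A_i$. Both are partial computable and monotonic, and their outputs lie in $T$ precisely because each $\phi_i$ is a homomorphism, so that coordinatewise products and inverses of threads are again threads. Taking unions over initial segments recovers the coordinatewise operations on $[T]$, which are exactly the group operations of $\varprojlim_i(A_i,\phi_i)$. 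Hence $H=([T],\Op,\Inv)$ is a computable Baire presentation with $H=G$ as topological groups, and every step is effective in the given data.

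I do not anticipate a serious obstacle: the construction is a direct effectivization of the standard identification of a profinite group with the path space of its defining inverse system. The only points requiring care are confirming that the product topology on the inverse limit agrees with the Baire-space topology on $[T]$, so that the abstract isomorphism is in fact a homeomorphism, and that the coordinatewise operations preserve $T$ — which is exactly where surjectivity and the homomorphism property of the $\phi_i$ are used.
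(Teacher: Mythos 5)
Your construction is essentially identical to the paper's: the same tree of threads of the inverse system (the paper just identifies $A_i$ with $\{0,\dots,n_i-1\}$ first, which is why its branching bound is immediate), and the same coordinatewise definition of the group operations via partial computable functions on strings. The verification details you supply (no leaves via surjectivity of the $\phi_i$, preservation of $T$ via the homomorphism property, agreement of the product topology with the path-space topology) are all correct and match what the paper leaves implicit.
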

\begin{proof} We use  the notation of \cref{def:profinite}. Let $n_i= | A_i|$. By the hypotheses one  can effectively identify the elements of $ A_i$ with $\{ 0, \ldots,  n_i-1\}$. Let  
\bc $T = \{ \sss \in \NN^* \colon \forall i< |\sss| [\sss(i) < n_i \lland  [ i>0 \to \phi_i(\sss(i)) = \sss(i-1)]]\}$. \ec 
It is clear  from the hypotheses that  $T$ is c.l.c.\ via $k=0$ (Definition~\ref{def:comploccompact}). To show that the binary group operation $\Op$ on $[T]$  is computable, for strings $\sss_0, \sss_1$ on $T$  of the same length $k+1$, let $P_2 (\sss_0, \sss_1)$ be the unique string  $\tau \in T$ of  length $k+1$ such that in $\+ A_k$ one has  $\sss_0(k) \sss_1(k) = \tau(k)$. The inversion operation $\Inv$ on $[T]$   is computable by a similar argument. 
\end{proof}
We will see in  Prop.~\ref{lem:proc}  that the   converse   holds as well. So,   for profinite groups our definition of  computably t.d.l.c.\ group coincides with  the existing one.  {We note that  Smith~\cite{Smith:81}  already obtained this equivalence,  using  a  different   terminology.} 
\begin{proposition} \label{compBaire discrete} A discrete group $G$ has a computable presentation  in the usual sense of  \cref{compStr} if and only if it has a  computable Baire presentation. \end{proposition}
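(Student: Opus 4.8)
The plan is to prove the two directions separately, using the meet-groupoid characterization of discrete computability from \cref{ex:discrete computable} to carry the harder direction.

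For the implication from a computable copy to a computable Baire presentation, I would realise the discrete space $G$ as the set of paths of a very simple tree. Assuming $G$ has computable domain $D\sub\NN$ and computable operations, let $T=\{\langle\rangle\}\cup\{\langle g\rangle\frown 0^n\colon g\in D,\ n\ge 0\}$, so that each group element $g$ corresponds to the isolated path $\langle g,0,0,\dots\rangle$ and $[T]$ is discrete and homeomorphic to $G$. One checks directly that $T$ is c.l.c.\ in the sense of \cref{def:comploccompact}: the cone of every string of length $\ge 1$ is a singleton (hence compact), this set of strings is decidable, and the branching bound may be taken to be $H(\sss,i)=\sss(i)$ for $i<|\sss|$ and $0$ otherwise; only the root is infinitely branching, which is harmless since $G$ is infinite by \cref{conv. inf}. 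Transporting the operations, $\Inv$ and $\Op$ are computable as operators on paths (\cref{def:Comp fcn on tree}) via monotone maps that read off the first coordinate, compute in $G$, and then append zeros. This produces a computable Baire presentation as in \cref{def:main1}.

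For the converse I would not try to read the individual group elements off the tree, but instead reconstruct the meet groupoid. Let $H=([T],\Op,\Inv)$ be a computable Baire presentation of the discrete group $G$. Since $G$ is discrete, every compact subset of $[T]$ is finite and the trivial subgroup is compact open. First compute the identity: the leftmost path $f_0$ of $T$ is computable, so $e=\Op(f_0,\Inv(f_0))$ is a computable path, and membership $e\in\+ K_u$ is decidable because $\+ K_u$ is clopen. Using that $\Op$ and $\Inv$ are computable operators, uniform continuity on the compact cones (\cref{lem:lahm}) lets one compute code numbers for $\Inv(\+ K_u)$ and $\Op(\+ K_u\times\+ K_v)$, and, by the binary analogue of \cref{lem:lahm2}, decide inclusions such as $\Op(\+ K_u\times\+ K_v)\sub\+ K_w$. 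Combined with the decidable Boolean operations and inclusion on compact open sets from \cref{lem: comp index tree}, this makes ``$\+ K_u$ is a subgroup'' and ``$\+ K_u$ is a coset'' decidable and the groupoid product and inverse computable on cosets. Finally the index $|U\colon U\cap V|$ of two subgroups is computed exactly as in the ``$\RA$'' part of \cref{thm:main}: enumerate pairwise disjoint compact open cosets of $U\cap V$ lying inside $U$ until their union equals $U$, which terminates because $U$ is finite, and output their number. This exhibits a Haar computable copy of $\+ W(G)$, so $G$ is computably t.d.l.c.\ via a meet groupoid; by \cref{ex:discrete computable}, $G$ then has a computable copy in the sense of \cref{compStr}.

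The main obstacle is exactly in the converse direction: the set of strings whose cone is a single group element is only $\Pi^0_1$ (non-branching above a node is a co-c.e.\ condition), so one cannot effectively \emph{name} the elements of $G$ directly from the tree. The point of routing through the compact open cosets is that all the data needed for a Haar computable meet groupoid---inclusion, product, inverse, and the index function---are decidable or computable from the operators $\Op,\Inv$ together with the effective compactness of $T$, even though the underlying isolated points are not uniformly locatable. I expect the only mildly technical step to be recording the binary version of \cref{lem:lahm2} and the computation of image code numbers, both of which follow from the uniform-continuity argument of \cref{lem:lahm}.
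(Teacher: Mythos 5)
Your proof is correct and takes essentially the same route as the paper: the forward direction uses the identical tree $\{ \langle r\rangle \frown 0^k \colon r \in D,\ k\in\NN\}$ with operations that read only the first coordinate, and the converse passes through a Haar computable copy of the meet groupoid and then invokes \cref{ex:discrete computable}. The only difference is that the paper obtains the converse by directly citing \cref{thm:main2}, whose ``$\RA$'' direction already produces the Haar computable meet groupoid from any computable Baire presentation, whereas you re-derive that construction by hand; in doing so, note that one should (as in that proof) obtain the groupoid product $A\cdot B$ as the unique coset $C$ with $AB\sub C$ using the decidable inclusion of \cref{lem: decide inclusion}, rather than claiming a code number for the image set $\Op(\+ K_u\times \+ K_v)$ can be computed outright.
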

\begin{proof} \rapf Let the  computable set $D\sub \NN$ be the  domain of the  computable presentation.  For the computable Baire presentation, we take the c.l.c.\  tree \bc $T_G = \{ r\ape 0^k \colon \, r \in D \lland k \in \NN\}$,  \ec and the operations canonically defined on $[T_G]$ via partial computable functions that only regard the first entry of the oracle strings.  

\lapf  By \cref{thm:main2} proved shortly below, $G$ has a computable presentation via a meet groupoid. It now  suffices to invoke \cref{ex:discrete computable}. \end{proof}
    By Fact~\ref{ex:S},  each computable presentation of $G$  as a closed subgroup of $\S$  (Definition~\ref{Def1}) is a computable Baire presentation.  \cref{lem:basic operations} showed that for   a computable presentation as a closed subgroup of $\S$, the group theoretic operations on compact open sets are algorithmic.  In the  more general  setting of computable Baire presentations,  a weak  version of    Lemma~\ref{lem:basic operations} still holds. 
Recall from Definition\   \ref{def:E} that  given a c.l.c.\ tree $T$, by  $E_T$ we denote  the set of minimal code numbers $u$ such that $\+ K_u$ is compact. 
\begin{lemma} \label{lem: decide inclusion}  Let $G$ be computably t.d.l.c.\ via a computable Baire presentation $([T], \Op, \Inv)$.

\n (i)  There is  a computable   function  $ I \colon E_T \to E_T$     such that for each $u  \in E_T$, one   has  $\+ K_{I(u)} = (\+ K_u)^{-1}$. 
 (ii) Given  $u,v,w \in E_T$,  one can decide whether $\+ K_u \+ K_v \sub \+ K_w$. 
\end{lemma}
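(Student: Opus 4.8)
The plan is to reduce both parts to the decidability of forward-image inclusions supplied by \cref{lem:lahm2}, together with the observation that the images in question are again compact open.

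\emph{Part (i).} Since inversion in a group is an involution, the map $\Inv\colon [T]\to [T]$ is a continuous bijection whose inverse is $\Inv$ itself; hence it is a homeomorphism, and $(\+ K_u)^{-1}=\Inv(\+ K_u)$ is again compact open. Thus $\Inv(\+ K_u)=\+ K_w$ for some code number $w$. To locate such a $w$ effectively I note that equality $\Inv(\+ K_u)=\+ K_w$ is decidable from $u,w$: the inclusion $\Inv(\+ K_u)\sub \+ K_w$ is decided directly by \cref{lem:lahm2} applied to $\Phi=\Inv$, while the reverse inclusion $\+ K_w\sub \Inv(\+ K_u)$ is equivalent, by the involution property, to $\Inv(\+ K_w)\sub \+ K_u$, which is again decidable by \cref{lem:lahm2}. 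I then search over all $w$ until one with $\+ K_w=\Inv(\+ K_u)$ appears (the search halts since such $w$ exists) and finally pass to the minimal code number $w^\ast$ via \cref{lem: comp index tree}(ii). Setting $I(u)=w^\ast$ yields the required computable map $E_T\to E_T$.

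\emph{Part (ii).} I would first repackage the binary operation as a unary computable map, so as to apply \cref{lem:lahm2} off the shelf. Using the interleaving $\oplus$ from \cref{ss:Sinf}, let $T\boxtimes T$ be the tree of strings $\sss\oplus\tau$ with $\sss,\tau\in T$ of equal length, together with their odd-length truncations; then $[T\boxtimes T]$ is canonically homeomorphic to $[T]\times[T]$, and one checks that $T\boxtimes T$ is again c.l.c., its computable branching bound and decidable compact-cone predicate being inherited from those of $T$. The partial computable monotone function $\widehat P(\sss\oplus\tau)=P_{\Op}(\sss,\tau)$ witnesses that $\Op$, viewed as a map $[T\boxtimes T]\to [T]$, is computable in the sense of \cref{def:Comp fcn on tree}. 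Moreover, from $u,v\in E_T$ one computes a code number $u\boxtimes v$ for the compact open subset of $[T\boxtimes T]$ corresponding to $\+ K_u\times \+ K_v$ (equalize the lengths of the strings listed by $u$ and by $v$ through their extensions on $T$, which is possible since $u,v\in E_T$, and interleave). Now \cref{lem:lahm2}, applied to $\Op\colon [T\boxtimes T]\to [T]$ with code numbers $u\boxtimes v$ and $w$, decides whether $\Op(\+ K_{u\boxtimes v})\sub \+ K_w$; and under the identification $\Op(\+ K_{u\boxtimes v})=\+ K_u\+ K_v$, which is precisely the assertion to be decided.

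Alternatively one can avoid the interleaved tree and argue directly by effective uniform continuity of $\Op$: let $K_u,K_v$ be the finitely branching computable subtrees of $T$ generated by $u,v$ (with computable bounds, since $u,v\in E_T$), put $N=\max_{\beta\in w}|\beta|$, and compute by the binary analogue of \cref{lem:lahm}(i) a level $M$ (also bounding the lengths of the strings in $u,v$) such that $|P_{\Op}(\rho,\rho')|\ge N$ for all $\rho\in K_u,\rho'\in K_v$ with $|\rho|=|\rho'|=M$. Monotonicity of $P_{\Op}$ then shows that $\+ K_u\+ K_v\sub \+ K_w$ holds iff for every such pair there is $\beta\in w$ with $\beta\preceq P_{\Op}(\rho,\rho')$, a finite decidable condition. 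The main obstacle in either route is the effective uniform continuity underlying part (ii): producing the bounding level $M$, equivalently verifying that $T\boxtimes T$ is c.l.c.\ so that \cref{lem:lahm2} applies. This is a routine König's-lemma-plus-search argument exactly parallel to \cref{lem:lahm}(i), the only new feature being its binary form; everything else—the involution trick in (i), the passage to minimal code numbers, and the finite verification in (ii)—is bookkeeping layered on top of the decidability results already at hand.
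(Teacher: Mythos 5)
Your proposal is correct and follows essentially the same route as the paper: part (i) is decided by applying \cref{lem:lahm2} to $\Inv$ and using the involution to get both inclusions, then searching for (the minimal) code number, and part (ii) is handled exactly as in the paper by passing to the interleaved tree of pairs $\sss_0\oplus\sss_1$, observing it is c.l.c., viewing $\Op$ as a unary computable map on its path space, and invoking \cref{lem:lahm2}. The extra direct uniform-continuity argument you sketch for (ii) is just an unfolding of what \cref{lem:lahm2} does and adds nothing the paper doesn't already have.
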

\begin{proof} (i) By Lemma~\ref{lem:lahm2} one can decide whether $\+ K_u\sub (\+ K_w)^{-1}$. The equality  $\+ K_u= (\+ K_w)^{-1}$  is equivalent to $\+ K_u\sub  (\+ K_w)^{-1}\lland \+ K_w\sub  (\+ K_u)^{-1}$. So one lets $I(u)$ be the least index $v$ such that this equality holds.

\n (ii) Let $\wt T$ be the tree of  initial segments of strings of the form $\sss_0 \oplus \sss_1$, where $\sss_0, \sss_1 \in T$ have  the same length. Then $\wt T$ is a c.l.c.\  tree, $[\wt T]$  is  naturally homeomorphic to $[T] \times [T]$, and $\Op$ can be seen as a computable function $[\wt T ] \to [T]$. Now one applies Lemma~\ref{lem:lahm2}. \end{proof}

As discussed at the beginning of this   section, the following adds a further equivalent condition to Theorem~\ref{thm:main}.
   \begin{thm}  \label{thm:main2} \ \\ A group   $G$  is computably t.d.l.c.\ via a  Baire~presentation
$\LR$   

\hfill    $G$ is computably t.d.l.c.\   via a  meet groupoid. 

From a presentation of $G$ of one type, one  can effectively obtain a presentation of $G$ of  the other type.
\end{thm}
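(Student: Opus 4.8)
The plan is to lean on the equivalence already established in \cref{thm:main} together with \cref{ex:S}, reducing the new theorem to a single genuinely new construction, namely the passage from a Baire presentation to a meet groupoid.

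For the implication ``$\LA$'' (meet groupoid $\RA$ Baire) I would argue that essentially no fresh work is needed. Given a Haar computable meet groupoid for $G$, the ``$\LA$'' direction of \cref{thm:main} effectively produces a closed subgroup $C$ of $\S$ with $G \cong C$ and $\Tree C$ c.l.c., i.e.\ a presentation of Type S in the sense of \cref{Def1}. By \cref{ex:S} the operations $\Op,\Inv$ of $\S$ are computable, witnessed by monotone partial computable functions on $\Tree \S$; restricting these to $\Tree C \sub \Tree \S$ still witnesses computability, since for $\sss,\tau \in \Tree C$ the approximating output $P_{\Op}(\sss,\tau)$ is an initial segment of a path $\Op(f,g)\in C$ and hence lies in $\Tree C$, and similarly for $\Inv$. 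As $C$ is a subgroup, these operations map $[\Tree C]$ into itself, so $([\Tree C],\Op,\Inv)$ is a computable Baire presentation of $G$, obtained effectively.

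The real content is ``$\RA$'' (Baire $\RA$ meet groupoid). Here I would follow the ``$\RA$'' direction of \cref{thm:main}, but with the stronger product tool \cref{lem:basic operations} unavailable: in the Baire setting I only have \cref{lem: decide inclusion}, which computes inverses $I\colon E_T \to E_T$ and, crucially, merely \emph{decides} product-inclusions $\+ K_u \+ K_v \sub \+ K_w$ (and, by the same argument applied to iterated multiplication on the appropriate product tree, finite product-inclusions $\+ K_{u_1}\cdots \+ K_{u_k} \sub \+ K_w$). Combined with \cref{lem: comp index tree} on the code numbers $E_T$, this already makes the basic predicates decidable: $\+ K_u$ is a subgroup iff it is nonempty, $\+ K_u \+ K_u \sub \+ K_u$, and $\+ K_{I(u)} \sub \+ K_u$; and $\+ K_u$ is a coset iff it is nonempty and $\+ K_u \+ K_{I(u)} \+ K_u \sub \+ K_u$. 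The obstacle is that the groupoid product must return an actual code number, yet I cannot compute product \emph{sets}. I would get around this by characterizing the relevant sets through conjunctions of decidable inclusions and then searching $E_T$ for the object satisfying them, exploiting that cosets of a fixed subgroup partition $G$. Concretely, for a coset $A=\+ K_u$ the subgroup $U=AA^{-1}$ is the unique $\+ K_s$ such that $\+ K_s$ is a subgroup, $\+ K_s \+ K_u \sub \+ K_u$, and $\+ K_u \+ K_{I(u)} \sub \+ K_s$; all three are decidable, so $U$ (and likewise $A^{-1}A$) is computable from $u$ by unbounded search. Then $A\cdot B$ is defined iff $A^{-1}A = BB^{-1}$, and when defined $AB$ is a right coset of $U=AA^{-1}$; I compute its code number as the least $w$ for which $\+ K_w$ is a right coset of $U$ (decidable via $U\+ K_w \sub \+ K_w$ and $\+ K_w \+ K_{I(w)} \sub U$) with $\+ K_u \+ K_v \sub \+ K_w$, the partition property forcing $\+ K_w = AB$. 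Meet is intersection via \cref{lem: comp index tree} and inverse is $I$, so the whole meet groupoid is computable on the domain of minimal coset code numbers. Finally the index $|U : U\cap V|$ is computed exactly as in the ``$\RA$'' part of \cref{thm:main}: enumerate distinct left cosets of $U\cap V$ inside $U$ until their decidably checkable union equals $U$, then output their number; this yields condition (c) of \cref{def:comp_meet_groupoid}. Uniformity is visible at each step.

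I expect the subgroup-extraction step, computing $AA^{-1}$ as a code number from inclusion data alone, to be the crux of the argument, since it is precisely where the weaker Baire-level tools of \cref{lem: decide inclusion} must substitute for the computable product of \cref{lem:basic operations}.
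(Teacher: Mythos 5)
Your proposal is correct and follows essentially the same route as the paper: the ``$\LA$'' direction is delegated to \cref{thm:main} plus the observation that a Type S presentation is already a Baire presentation, and the ``$\RA$'' direction uses \cref{lem: decide inclusion} to recognize subgroups and cosets among code numbers, pins down $A\cdot B$ as the unique right coset of $AA^{-1}$ containing the set product $\+K_u\+K_v$, and computes indices by exhaustive search. The only (harmless) divergence is bookkeeping: you make ``being a coset'' outright decidable via a three-fold product inclusion, whereas the paper only observes it is c.e.\ and enumerates the cosets by a computable function $\theta$.
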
 
 \begin{proof} \lapf This   follows from the corresponding  implication in Theorem~\ref{thm:main}.
%
%

 \rapf We build a Haar computable copy $\+ W$ of the meet groupoid $\+ W(G)$ as required  in Definition~\ref{Def2}. By  Lemma~\ref{lem: decide inclusion}, one can decide whether $u \in E_T$ is the code number (\cref{defn: str index compact}) of a subgroup.   Furthermore,  one can decide whether $B= \+ K_w $ is a left  coset of  a subgroup $U= \+ K_u$: this holds iff   $BU \sub B$ and $B^{-1} B \sub U$, and the  latter two conditions are decidable by Lemma~\ref{lem: decide inclusion}. Similarly, one can decide whether $B$ is a right coset of $U$.
 
It follows that the set $V= \{ w \in E_T \colon \, \+ K_w \text{ is a  coset}\}$ can be  obtained via   an  existential quantification over a computable binary relation; in other words, $V$ is recursively enumerable (r.e.). 
A basic fact from computability theory states  that    from a description of $V$ as an r.e.\ set one can uniformly obtain a computable set $\wt D \sub \NN^+$ and a computable bijection  $\theta  \colon \wt D \to V$.  (Here  $\wt D$ is the set of ``stages" at which new elements are enumerated into $V$;  one can assume  that at each stage $n \in \wt D$  exactly one element $\theta(n)$  is enumerated.)      Write $A_n = \+ K_{\theta(n)}$ for $n\in \wt D$, and $A_0  = \ES$.

The domain of   $\+ W$   is $D = \wt D \cup \{0\}$.     By  \cref{lem: comp index tree}  the intersection operation on $\+ W$ is computable, i.e., there is a computable binary function $c$ on $\NN$ such that $A_{c(n,k)} =A_n \cap A_k$. Given $n,k \in \NN-\{0\}$ one can decide whether $A_n$  is a right coset of the same  subgroup   that  $A_k$  is a left coset of. In that case,  one can compute the number  $r$  such that  $A_r= A_n\cdot A_k$: one uses that  $A_r$ is the unique coset  $C$ such that \bi \item[(a)] $A_nA_k \sub C$,  and   \item[(b)]$C$ is a right coset of the same subgroup that  $A_k$ is a right coset of. \ei
 As in the corresponding implication in the proof of Theorem~\ref{thm:main}, for subgroups $U,V$, one can compute  $|U \colon U \cap V|$ by finding in  $\+ W$ further  and further  distinct left cosets of $U \cap V$ contained in $U$,  until their union reaches $U$. The latter condition is decidable.
\end{proof}
\begin{notation} \label{def:Wcomp} Given a computable Baire presentation $G$, by $\+ W_\text{comp} (G)$ we denote the computable copy of $\+ W(G)$ obtained in the proof above. \end{notation}

\begin{prop} \label{prop: SL2} Let $p$ be a prime, and let $n \ge 2$.   Let $\QQ_p$ and $F_p((t))$ denote the rings of $p$-adic numbers, and Laurent series over $\mathbb F_p$, respectively. The   t.d.l.c.\ groups $\SL_n(\QQ_p)$ and  $\SL_n(\mathbb F_p((t)))$  have computable  Baire presentations. 
\end{prop}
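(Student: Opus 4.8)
The plan is to first produce a computable Baire presentation of the \emph{ring} $\QQ_p$ in which both addition and multiplication are computable operators, then pass to the matrix ring $M_n(\QQ_p)$ by taking an $n^2$-fold product, and finally carve out $\SL_n(\QQ_p)$ as the zero set of $\det-1$ using \cref{prop: prune}. A word of warning about the first step: one cannot simply take the naive ``valuation followed by digits'' tree for $\QQ_p$, because addition is then \emph{not} computable at the pair $(\aaa,-\aaa)$ --- the output should be the path coding $0$, whose first symbol records infinite valuation, yet no finite portion of $\aaa$ and $-\aaa$ ever confirms that the sum is exactly $0$. Instead I would use the computable Baire presentation of the additive group $\QQ_p$ obtained by feeding the Haar computable meet groupoid of \cref{ex:Qp} into \cref{thm:main2}. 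There an element $\aaa$ is coded by the permutation $\sigma_\aaa$ of the coset set $\{D_{r,a}\}$ given by $\sigma_\aaa(D_{r,a})=D_{r,\,a+\pi_r(\aaa)}$; addition is permutation composition and hence computable, and $0$ is just the identity permutation, built coordinatewise with no global ``is it zero'' commitment.

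The main work, and the step I expect to be the genuine obstacle, is to check that \emph{multiplication} is computable in this presentation; multiplication is not part of the meet-groupoid data, so it must be verified by hand against the permutation coding. Given oracles for $\sigma_\aaa,\sigma_\bbb$ and a target $r$, I must output $\pi_r(\aaa\bbb)$. The key point is that a lower bound for a valuation is a \emph{finitary} datum: one has $v(\bbb)\ge s$ iff $\sigma_\bbb$ fixes the coset $D_{s,0}=p^s\ZZ_p$, a decidable condition, so by searching downward through $s=0,-1,-2,\dots$ one effectively finds some $s_0\le v(\bbb)$ (the search halts even when $\bbb=0$), and likewise some $r_0\le v(\aaa)$. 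Reading $\aaa \bmod p^{\,r-s_0}$ and $\bbb\bmod p^{\,r-r_0}$ off the two permutations and multiplying the truncations then yields $\aaa\bbb\bmod p^r=\pi_r(\aaa\bbb)$, since valuations add under multiplication and there is no cancellation to detect. This shows $\QQ_p$ has a computable Baire presentation as a ring; taking the $n^2$-fold product and pushing all infinite branching to the root via \cref{rem:branching on top} produces a c.l.c.\ tree $T_0$ with $[T_0]\cong M_n(\QQ_p)$ on which the matrix product and the determinant, being polynomial in the entries, are computable in the sense of \cref{def:Comp fcn on tree}.

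Finally I would carve out $\SL_n$. Put $\delta=\det-1$, a computable map $[T_0]\to[\text{tree of }\QQ_p]$ with $\delta^{-1}(0)=\SL_n(\QQ_p)$, witnessed by a monotone partial computable $P_\delta$. Define a subtree $S\sub T_0$ by declaring $\sss\in S$ iff no prefix $\tau\preceq\sss$ has $P_\delta(\tau)$ converging, within $|\sss|$ steps, to a value inconsistent with the coding of $0$; this is decidable, so $S$ is a computable subtree, and a path $A$ lies in $[S]$ precisely when $\delta(A)=0$, i.e.\ $[S]=\SL_n(\QQ_p)$ (the tree $S$ may have dead ends, which is harmless). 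The determinant-one matrices over $\ZZ[1/p]$ are dense in $\SL_n(\QQ_p)$, and each has a uniformly computable coding path since $\pi_r(z/p^k)$ is computable from $z,k,r$; this supplies the uniformly computable dense sequence required by \cref{prop: prune}. Applying that proposition --- its hypothesis that only the root branches infinitely being arranged in the choice of $T_0$ --- shows the pruned tree $\wt S$ with $[\wt S]=\SL_n(\QQ_p)$ is c.l.c. The group operations are then computable: the product is the restriction of the computable product on $[T_0]$, and, because $\det=1$, the inverse is the adjugate $\Adj(A)$, whose entries are polynomials in those of $A$. This gives the desired computable Baire presentation of $\SL_n(\QQ_p)$; for $\SL_n(\mathbb F_p((t)))$ one repeats the argument with $C_p^{(\omega)}$, the projections of \cref{ex:Qp} for $\mathbb F_p((t))$, and the dense subgroup $\SL_n(\mathbb F_p[t,t^{-1}])$, the absence of carries only simplifying the computations.
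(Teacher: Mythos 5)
Your proof is correct, and its overall architecture is exactly that of the paper: present the ring $\QQ_p$ computably, take the $n^2$-fold product to get $M_n(\QQ_p)$, realize $\SL_n$ as the effectively closed preimage of $1$ under $\det$, prune to a c.l.c.\ tree via \cref{prop: prune} using the dense computable sequence from $\SL_n(\ZZ[1/p])$, and obtain inversion from the adjugate. The one genuine divergence is the first step. The paper codes a $p$-adic number directly as a string $r\ape\sss$ denoting $p^{-r}n_\sss$ and writes down explicit partial computable functions for addition and multiplication, whereas you route through the meet groupoid of \cref{ex:Qp} and \cref{thm:main2} and then verify multiplication by hand on the permutation coding. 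Your verification of multiplication is sound — the downward search for a valuation lower bound via ``$\sigma_\bbb$ fixes $p^s\ZZ_p$'' is exactly the right finitization, and it correctly handles $\bbb=0$. However, the ``word of warning'' that motivates the detour is mistaken: in the paper's coding the first symbol is not the valuation but the exponent of the reduced denominator, i.e.\ $\max(0,-v(\aaa))$, so $0$ is coded by $0\ape 000\ldots$ and no global ``is it zero'' commitment is ever needed. To add $r\ape\sss$ and $s\ape\tau$ with $r\le s$ one adds the digit strings (carries propagate upward, so this is local) and withholds the first output symbol only until either a nonzero digit of the numerator appears or $s$ digits have been read, whichever comes first; this is a finite wait, and in the case $\bbb=-\aaa$ it terminates after $s$ digits with output symbol $0$ followed by zeros. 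So the direct coding does yield computable addition; your alternative is a correct but unnecessary workaround, and it costs you an extra appeal to \cref{rem:branching on top} and a multiplication argument that the paper gets for free from explicit digit arithmetic. Everything from the matrix ring onward matches the paper's proof, including the treatment of $\mathbb F_p((t))$ via Laurent polynomials.
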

\begin{proof}
We provide  the proof for the groups $\SL_n(\QQ_p)$, and then  indicate  the changes that are necessary for the groups $\SL_n(\mathbb F_p((t)))$. 
We begin by giving a  computable Baire presentation of $\QQ_p$ as a ring. 

 {Let $Q$ be the tree of strings $\sss\in \NN^*$ such that    all the entries, except possibly the first, are   among $\{0, \ldots, p-1\}$,  and
  $r0 \not \preceq \sss$ for each $r>0$.}  Clearly $Q$ is c.l.c.\ via $k=1$ (Definition~\ref{def:comploccompact}). 
 We think of a  string $r \ape \sss \in Q$ as denoting the rational    number  $p^{-r} n_\sss\in \ZZ[1/p]$, where  $n_\sss$ is  the natural number that  has  $\sss$ as a   $p$-ary expansion, written in reverse order: \bc $n_\sss = \sum_{i< \sssl} p^i \sss(i)$. \ec    The   condition that  $r0 \not \preceq \sss$   for each $r>0$ expresses    that   $p$ does not divide~$n_\sss$ unless $r=0$.   
 Let $\sss, \tau$  be strings  over $\{0, \ldots, p-1\}$   of the same length $\ell$. By $\sss + \tau$ we denote the string $\rho$  of the same length $\ell$  such that  $n_\rho= n_\sss + n_\tau \mod p^\ell$. By   $\sss \cdot \tau$ we denote the string $\rho$ of length $2\ell$ such that $n_\rho= n_\sss  n_\tau \mod p^{2\ell}$.

We now provide   partial computable functions $P, P_1, P_2, P_3$ according to \cref{rem:monot} that are defined on computable subsets of the trees  $Q$ or $Q^2$,  in order to show that various operations on $[Q]$ are computable according to Definition\ \ref{def:Comp fcn on tree}. For  a string of the form~$r\ape \sss $~let 

\medskip 
   $P(r\ape \sss)=( r-k) \ape \tau$,   
   \medskip
   
   \n  where $k\in \NN $ is maximal such that  $k \le r$  and 
$0^k\preceq \sss $, and $\tau $ is the rest, i.e.\ $0^k \tau =  \sss $. To show  the computability of the function $q \to -q$, let
   \medskip
   
    $P_1(r\ape \sss)= r\ape \tau$ where $|\tau|=\sssl$ and $\sss + \tau=0 \mod  p^{\sssl}$. 
    
    \medskip
    
 \n    To show that the addition operation  is computable, if $r, s \in \NN$ and $\sss, \tau$ are strings such that $r+ |\tau| = s + \sssl$, let

  $P_2(r\ape \sss, s\ape \tau) = \begin{cases}
    P (s \ape (0^{s-r} \sss + \tau) &\text{if $r \le s$}\\
   P (r \ape (\sss + 0^{r-s} \tau )&\text{otherwise.}
\end{cases}$
    \n

  \n    To show that the multiplication  operation  is computable, let

  $P_3(r\ape \sss, s\ape \tau) =  \begin{cases} P ((2s) \ape (0^{s-r} \sss \cdot \tau)) & \text{if }  r \le s\\  
     P ((2r) \ape (\sss \cdot  0^{r-s}  \tau )) & \text{otherwise. } \end{cases}$
  
  \n $P_2$ and $P_3$ are   the correct   functions because, say for $r\le s$, in $\ZZ[1/p]$  one has   $p^{-r} n_\sss + p^{-s}n_\tau= p^{-s}(n_{0^{s-r}\sss}+n_\tau)$ and 
  $p^{-r} n_\sss p^{-s}n_\tau= p^{-2s}n_{0^{s-r}\sss}n_\tau$.

We   next provide a computable  Baire presentation $([T], \Op, \Inv)$ of  $\SL_n(\QQ_p)$ as   in \cref{def:main1}.    Let $T$ be the c.l.c.\ tree that is an  $n^2$-fold ``power" of $Q$. More precisely,
  $T = \{ \sss \colon \forall i < n^2 \, [ \sss^i \in Q]\}$, where $\sss^i$ is the string of entries of $\sss$ in positions of the form $\ell n^2 + i$ for some  $\ell,i \in \NN$. Clearly, $[T]$ is c.l.c.\ via $k= n^2$, and $T$  can be naturally identified with the matrix algebra $M_n(\QQ_p)$. By the  computability of  the ring  operations on $\QQ_p$ as verified above, the matrix product is computable as a function $[T] \times [T]\to [T]$, and the   function $\det \colon [T] \to [Q]$ is computable.  
  
  Note that for any c.l.c.\ trees $T$ and $R$, any computable path $f$ of $R$, and any computable function $\Phi \colon [T]\to [R]$, there is a computable subtree $S $ of $T$ such that $[S]$ equals the pre-image $\Phi^{-1}(f)$.  (In the language of computable analysis,     the pre-image is effectively closed.) To see this, suppose that $\Phi$ is computable according to \cref{def:Comp fcn on tree} via a partial computable function $L$  taking pairs in  $T\times \NN$ to $\NN$. Let $S$ consists of the strings $\sss \in T$ of length $t$ such that $t$ steps of the attempted computations $L(\sss,k) $   don't yield a contradiction to the hypothesis that the output of the oracle Turing machine equals $f$. More formally, 
  \bc $S = \{ \sss \in T \colon \forall k < \sssl [ L_{\sssl}(\sss,k)  \text{ is defined }  \to L_{\sssl}(\sss,k)   = f(k)]\}$. \ec
  In our setting, applying  this to the function $\det \colon [T] \to [Q]$ and  the path  $f = 01000\ldots $  that denotes $1 \in \QQ_p$, we obtain a computable subtree $S$ of $T$ such that $[S]$ can be identified with $SL_n(\QQ_p)$.  
  
  It is well-known  that $SL_n(\ZZ[1/p])$ is dense in $SL_n(\QQ_p)$. This is a special case of strong approximation for algebraic groups (see \cite[Ch.\ 7]{Rapinchuk.Platonov:93}), but can also be seen in an elementary way using Gaussian elimination. The paths on $S$ corresponding to matrices in $SL_n(\ZZ[1/p])$ are precisely the ones that are    $0$ from some point on. Clearly there is a computable listing $(f_i)$ of the set of such   paths.   So by \cref{prop: prune} one  can  replace $S$ by a c.l.c.\ tree $\wt S$ such that $[\wt S] = [S]$.

 To obtain a computable Baire presentation based on $\wt S$, note that matrix multiplication on $[\wt S]$ is computable  as the restriction of matrix multiplication on $[T]$. To define the matrix inversion operation $\Inv$, we use the fact that   the inverse of a matrix with determinant $1$  equals its  adjugate matrix; the latter can be obtained  by computing determinants on minors.
  
Next we treat  $SL_n(\mathbb F_p((t)))$. To  give a Baire presentation of    $\mathbb F_p((t))$ as a ring,  
  we use the same tree $Q$ as above, but now think of a string $r\sss$ as representing the finite Laurent series
  $t^{-r} n_\sss\in \ZZ[1/p]$, where   $n_\sss = \sum_{i< \sssl} t^i \sss(i)$. 
  For strings $\sss, \tau$   over $\{0, \ldots, p-1\}$ and of the same length $\ell$, by $\sss + \tau$ we  denote the string $\rho$  of length $\ell$  such that  $n_\rho= n_\sss + n_\tau \mod p^\ell$. By $\sss \cdot \tau$ we denote the string $\rho$ of length $2\ell$ such that $n_\rho \equiv n_\sss n_\tau \mod t^{2\ell}$.  Now the ring operations are computable via the   partial computable functions defined as  for the case of  $\QQ_p$, but   with the new interpretation of the symbols ``$+$'' and  ``$\cdot$" as  operations on strings.  
  The remainder of the proof didn't use any particulars about the computable Baire presentation of the ring $\QQ_p$,  besides the fact that the path denoting $1$ is computable; this  carries over to the present case.   It is   known  that $SL_n(\mathbb F_p(t))$, the group of   matrices  over finite Laurent series with  determinant 1,   forms a    dense subgroup of  $SL_n(\mathbb F_p((t)))$, so from here on we can argue as before.
     \end{proof}

\section{More on   the equivalences of notions of computable presentation} \label{s:autost}
Theorem~\ref{thm:main} and its extension Theorem~\ref{thm:main2} showed that  our various notions of computable presentation of a t.d.l.c.\ group are equivalent. This section obtains extra information  from  the proofs   of these equivalences. The main result of the   section, 
Thm.\ \ref{prop: comp isom} will show  that,  given a computable Baire presentation $ ([T], \Op, \Inv)$ of a t.d.l.c.\ group $G$, one   can  effectively obtain a computable presentation  of $G$  via a closed subgroup $\wt G$ of $\S$, with an isomorphism $\Phi \colon [T] \to \wt G$ so that both  $\Phi$ and $\Phi^{-1}$ are  computable.  The presentation based on a  closed subgroup of $\S$ can be seen as an ``improved" computable Baire presentation. For instance, the group operations  on compact open subsets of $\wt G$ are fully computable by Lemma~\ref{lem:basic operations}, while in the general case we only have the weaker form Lemma~\ref{lem: decide inclusion}.  

The section then proceeds to    two applications of Thm.\ \ref{prop: comp isom}: computability of the modular function, and  
 computability of Cayley-Abels graphs.  
For a compactly generated t.d.l.c.\ group $G$, its Cayley-Abels graphs are   generalisations of the usual Cayley graphs   in the   setting of a (discrete) finitely generated group.  We will show that each Cayley-Abels graph can be interpreted  via first-order formulas with parameters in  the meet groupoid  of $G$. The interpretation is constructed in such a way that  if $G$ is computably t.d.l.c., the graphs are computable in a uniform way. 

In~Section~\ref{s:scale} we apply  Thm.\ \ref{prop: comp isom}  to obtain  an upper  bound on  the computational complexity of the scale function. 
As  a  further application, 
 in Section~\ref{s:autostable} we will obtain an equivalent criterion on   whether  a t.d.l.c.\ group $G$ has a unique computable Baire presentation up to computable group homeomorphism: any two Haar computable copies of its meet groupoid $\+ W(G)$ are computably isomorphic.  
 This is useful because uniqueness of a computable presentation is easier to show for countable structures.  In Theorem~\ref{thm:autostable} we will apply the  criterion to show  the  uniqueness of a computable Baire presentation  for the additive groups of the $p$-adic integers and the $p$-adic numbers, as well as for $\ZZ \ltimes \QQ_p$. 

Besides  the computability theoretic  concepts introduced  in  Section~\ref{s:comp notions}, we will need      a   fact on   continous, open functions on the set of paths of c.l.c.\  trees. It  deduces the  computability of such  a function from the hypothesis  that   its action on the compact open sets is computable in terms of their code numbers. Similar to the  results in Section~\ref{s:comp notions}, this is a special case of a result in the field of computable topology; see    \cite[Lemma 2.13]{EffedSurvey}. However, we prefer to present a short, elementary proof.
\begin{prop} \label{prop: compact open} Let $T$ and $ S$ be c.l.c.\  trees (see Definition\ \ref{def:comploccompact}), and let the function $\Phi\colon [T]\to [S]$ be   continuous and open. Suppose that   there is a computable function $f\colon \NN \to \NN$ such that for each code number  $u$ one has $\Phi(\+ K_u^T)=\+K_{f(u)}^S$.  Then

(i)   $\Phi$ is    computable; 

(ii) if $\Phi$ is a bijection then     $\Phi^{-1}$ is computable as well.  \end{prop}
\begin{proof} (i) To show $\Phi$ is computable, we define a partial computable function $P_\Phi$ on $T$ with values in $S$ according to \cref{rem:monot}. Given $\sss \in T$ such that $[\sss]_T$ is compact, compute  $u$ such that $\+K^T_u = [\sss]_T$. Let $\{\beta_1, \ldots, \beta_s \} \sub S$ be the finite set with strong index $f(u)$. Let $\eta=P_\Phi(\sss)$ be the longest common initial segment of the strings $\beta_i$. 
Note that with the standard ultrametric on $[S]$, the set $[\eta]_S$ is a closed ball centred at  any element  of $\+ K_{f(u)}^S$,  with  the same diameter as $\+ K_{f(u)}^S$. So if $\sss \preceq \tau \in T$ then $[P_\Phi(\sss)]_S \supseteq  [P_\Phi(\tau)]_S$. This shows that $P_\Phi$ is monotonic. Since $\Phi$ is continuous, one has \bc $\Phi(f)= \bigcup_n  
\{ P_\Phi (f\uhr n)\colon \, f\uhr n \in \dom  (P_\Phi)\}$ \ec as required.

\n (ii) By Lemma~\ref{lem: comp index tree} applied to $S$,  there is a computable  function  $g$ that is ``inverse" to $f$ in the sense  such that for each $v$ such that $\+ K_v^S$ is compact, one has $\Phi^{-1}(\+ K_v^S)=\+K_{g(v)}^T$. Now one applies (i) to   $\Phi^{-1}$ and $g$.
\end{proof}
 
\subsection{Computable action on the meet groupoid}
 Towards the main result   of this  section, let $G= ([T], \Op, \Inv)$ be a computable Baire presentation of a t.d.l.c.\ group.  Let $\+ W $ be the computable copy of $ \+ W(G)$ with domain~$D$  given by the proof of ``$\RA$" in Thm.\ \ref{thm:main2}.  Recall (\cref{def:Wcomp}) that we    write $\+ W= \+ W_{\text{comp}}(G)$.   %
Let $\wt G = \+ G_\text{comp} (\+ W)$ as in \cref{def:Gof}. Let $\Phi \colon G \to \wt G$  be given by the     action $(g,A) \mapsto gA$; see  near the end of   the proof of the implication ``$\LA$" of Thm.\ \ref{thm:main}.

\begin{thm} \label{prop: comp isom}  Let $G, \+ W$,  $\wt G$ and the homeomorphism $\Phi \colon G \to \wt G$   be as defined above.
\bi
\item[(i)]
  $\Phi$   is computable, with a  computable inverse. 
  \item[(ii)] The   action $[T] \times D \to D$, given by   $(g,A) \mapsto gA$,  is computable.


 \ei \end{thm}
%
\begin{proof} 



\n (i) Write $S = \Tree {\wt G}$. By   Prop~\ref{prop: compact open} it suffices to show that there is a computable function $f$ such that $\Phi(\+ K_u^T)=\+K_{f(u)}^S$ for each code number $u$.

We may assume that $\+ K_u^T$ is of the form $A \cap B$ where $A $ is a left coset of a subgroup~$U$ such that $B$ is a right coset of $U$: trivially a  group $G$ is the union of the  right cosets of any given subgroup; hence  such sets form a basis of the topology of $G$. And by Lemma~\ref{lem: comp index tree}, given a general compact set $\+ K_w^T$ one can effectively write it as a finite union of sets of this form. 

If $\+ K_u^T$ is of the form $A \cap B$ as above, we have \bc $\Phi(\+ K_u^T)= \{p \in \wt G \colon \, p(U)= A \lland p^{-1}(U)= B\}$. \ec  Given any $F \in \+ W$,  by Lemma~\ref{lem: sst}  we can  compute  bounds on   $p(F)$ and $p^{-1}(F)$ whenever $p \in \Phi(\+ K_u^T)$,   letting $L=A$ and $L=B$ respectively. Recall the set $E_T$ from  Definition\   \ref{def:E}, and recall from   the proof of ``$\RA$" in Thm.\ \ref{thm:main2}  that $\theta \colon D \to E_T$ is a 1-1  function with range  the minimal code numbers of compact open cosets in $G$; we write $A_n$ for the coset  with code number $\theta(n)$, and often identify  $A_n$ with $n$. Suppose that $U = A_n$.   Let $f(u)$  be a strong index for the finite set of strings $\beta\in S$ of length $2n+2$ such that   \bi  \item[(a)] $A_{\beta(2n)} = A$ and $A_{\beta(2n+1)} = B$,  
 \item[(b)]  for $r< 2n$ of the form $2i$,     $\beta(r) $ is less than the bound  on $p(F)$ given by Lemma~\ref{lem: sst} where $L=A$ and  $F= A_i$.
  \item[(c)]  for $r< 2n$ of the form $2i+1$,     $\beta(r) $ is less than the bound  on $p^{-1} (F)$ given by Lemma~\ref{lem: sst} where  $L= B$ and $F= A_i$.
 \ei
Then $\Phi(\+ K_u^T) = \+K^S_{f(u)}$ as required.

\n (ii) Formally, by    \cref{def:Comp fcn on tree},  the left   action is computable iff $\Phi$ is computable.   Informally speaking,   we use an   oracle Turing machine  that  has as an  oracle a path $g$ on $[T]$, and as an input an  $A \in \+ W$. If $A$ is  a left coset of a subgroup $V$, it outputs     the left  coset $B$  of~$V$ such that it can   find a string   $\sss \prec g$ with  $[\sss]_TA \sub B$.  
\end{proof} 
Note  (ii) implies that the right action is also computable, using that $Ag = (g^{-1}A^{-1})^{-1}$ and inversion is computable both in $G$ and in $\+ W$. 

We apply the theorem to obtain a computable version of the open mapping theorem for t.d.l.c.\ groups (\cref{rem: open mapping})  in the case of a  bijection. (This shows that the inverse of $\Phi$ in (i) of the theorem is in fact automatically computable.)
\begin{cor} \label{cor:inverse} Let $G,H$ be   t.d.l.c.\ groups given by computable Baire presentations, and let $\Psi \colon G \to H$ be a computable bijection that is a group homomorphism. Then $\Psi^{-1}$ is computable. \end{cor}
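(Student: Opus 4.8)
Since $\Psi$ is computable it is continuous, and since it is a continuous bijective homomorphism from the $\sigma$-compact group $G$ onto the Baire group $H$, it is open by the open mapping theorem (the same result invoked in the proof of \cref{thm:main}). Hence $\Psi$ is a homeomorphism and sends each compact open set to a compact open set. This is exactly the situation addressed by \cref{prop: compact open}: to conclude that $\Psi^{-1}$ is computable it suffices, by part~(ii) of that proposition, to exhibit a computable function $f$ such that $\Psi(\+ K^{T_G}_u)=\+ K^{T_H}_{f(u)}$ for every code number $u$, where $T_G,T_H$ denote the c.l.c.\ trees of the given Baire presentations of $G$ and $H$. (If one prefers, one may first apply \cref{prop: comp isom} to both $G$ and $H$, replacing them up to bicomputable isomorphism by closed subgroups of $\S$; this is the route hinted at in the remark following \cref{prop: comp isom}, but it does not lighten the essential difficulty below, so I would work directly with the Baire presentations.)

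\textbf{Constructing $f$.} First I would produce, uniformly in a code number $u$, a \emph{uniformly computable dense sequence} in the compact set $\+ K^{T_G}_u$: the subtree of extensions of the strings in $u$ is computably branching by clause~(3) of \cref{def:comploccompact}, so leftmost-type paths through it are computable and a dense computable sequence can be listed. Applying the computable map $\Psi$ to this sequence yields a uniformly computable dense sequence in the (compact open) image $\Psi(\+ K^{T_G}_u)$. Next, restricting $\Psi$ to the effectively compact set $\+ K^{T_G}_u$ and applying the modulus of uniform continuity from \cref{lem:lahm}(i), I would compute a decreasing sequence of compact open sets $V_n\downarrow \Psi(\+ K^{T_G}_u)$, so that $\Psi(\+ K^{T_G}_u)=[S]$ for a computable subtree $S\subseteq T_H$ cut out by the $V_n$. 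Since the dense computable sequence lands in $[S]$, \cref{prop: prune} (after normalising $T_H$ via \cref{rem:branching on top} so that only the root branches infinitely) shows that the pruned tree $\widetilde S$ is decidable and c.l.c.\ with $[\widetilde S]=\Psi(\+ K^{T_G}_u)$; from this decidable, finitely branching tree one reads off a code number $f(u)$ in $T_H$. With $f$ in hand, \cref{prop: compact open}(ii) finishes the proof.

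\textbf{The main obstacle.} The crux is precisely the effectivisation of the open mapping theorem encapsulated in the computability of $f$, i.e.\ that $\Psi$ carries compact open sets to compact open sets in a code-number-to-code-number fashion. The difficulty is an inherent asymmetry: one inclusion $\Psi(\+ K^{T_G}_u)\subseteq \+ K^{T_H}_w$ is decidable by \cref{lem:lahm2}, and the outer approximations $V_n$ describe the image only \emph{from outside} (as a co-c.e.\ intersection), whereas certifying an exact code number additionally requires an inner grip on the image, which is what the dense sequence and \cref{prop: prune} supply. A clean supporting fact is that the $V_n$ must \emph{stabilise} to $\Psi(\+ K^{T_G}_u)$: the sets $V_n\cap \Psi(\+ K^{T_G}_u)^{\,c}$ are compact, decreasing, with empty intersection, hence empty for large $n$. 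The genuinely delicate point I would need to pin down carefully is detecting this stabilisation, equivalently converting the pruned-tree presentation $\widetilde S$ of the image into a bona fide code number of $T_H$-cones, since individual cones of $V_n$ may straddle the (clopen) boundary of the image before the approximation is fine enough; reconciling the density certificate with the outer approximation at a common resolution is where the argument must be made rigorous.
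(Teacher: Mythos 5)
Your reduction to \cref{prop: compact open}(ii) is sound, and you have correctly located the crux: one must compute, from a code number $u$, a code number $f(u)$ for the compact open set $\Psi(\+ K^{T_G}_u)$. But the purely topological machinery you propose for this step cannot succeed. The data you assemble --- a decidable pruned tree $\wt S$ with $[\wt S]=\Psi(\+ K^{T_G}_u)$ clopen, a uniformly computable dense sequence in $[\wt S]$, and a computable decreasing sequence of outer approximations $V_n$ with computable code numbers that eventually stabilise to the image --- does not determine a code number computably. Concretely, on the binary tree let $\wt S_e$ be all of $2^{<\NN}$ if machine $e$ never halts, and the set of strings not extending $0^s1$ if $e$ halts at stage $s$; this is uniformly decidable, pruned, has clopen path set, admits a uniformly computable dense sequence (the eventually-zero paths) and a stabilising computable outer approximation, yet any algorithm producing a code number for $[\wt S_e]$ would decide the halting problem (test whether the code covers all of $2^{\NN}$ via \cref{lem: comp index tree}). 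The point is that ``$[\tau]_{T_H}\sub[\wt S]$'' is only a $\Pi^0_1$ condition, so detecting the stabilisation of your $V_n$ --- the step you flag as delicate --- is genuinely impossible from this information alone. The missing ingredient is group-theoretic, not topological: one must use that $\Psi$ carries compact open \emph{cosets} to compact open cosets, and that \cref{prop: comp isom}(ii) makes the action $(h,A)\mapsto hA$ on $\+ W_{\text{comp}}(H)$ computable, so that the code number of $\Psi(a)V$ is computable from a computable point $a$ and the subgroup $V$. (Every compact open set decomposes effectively into finitely many cosets of a single subgroup, so this does yield your $f$.)

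The paper's proof packages this slightly differently but rests on exactly that ingredient. It fixes one compact open subgroup $U$ of $G$ with $V=\Psi(U)$, produces a uniformly computable sequence $\seq{a_i}$ of left coset representatives of $U$, and uses \cref{prop: comp isom}(ii) to compute code numbers for the cosets $K_i=a_iU$ and their images $S_i=\Psi(a_i)V$. Each restriction $\Psi\uhr{K_i}$ is a computable bijection between effectively compact sets, so \cref{lem:lahm}(ii) supplies uniformly computable local inverses $\Theta_i\colon S_i\to K_i$; since the $S_i$ are clopen and partition $H$, one recovers $\Psi^{-1}$ by searching, on input $\sss\in\mathit{Tree}(H)$, for $i$ with $[\sss]_{\mathit{Tree}(H)}\sub S_i$ (decidable by \cref{lem: comp index tree}) and then running $\Theta_i$. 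You should replace your construction of $f$ by this coset decomposition, or adopt the local-inverse patching directly; either way \cref{prop: comp isom}(ii) is the lemma your argument needs and does not currently invoke.
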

\begin{proof}  Since $\Psi$ is a continuous group isomorphism, it is open by the open mapping theorem.  Fix a compact open subgroup $U$ of $G$. Then $V= \Psi(U)$ is a compact open subgroup of $H$. Let $\seq{a_i}\sN i$ be a uniformly computable   sequence of left coset representatives of $U$ in $G$.  To obtain this, fix an effective numbering $\seq{\sss_k}\sN k$ of  $\Tree G$, and let  $ b_k $ be a uniformly   computable path of $\Tree G$  extending  $\sss_k$. Let $\seq{a_i}\sN i$ be the subsequence  of $\seq{b_k}\sN k$ obtained by deleting $b_k$ if $b_k U = b_\ell U$ for some $\ell < k$. Now  $\seq{\Psi(a_i)}\sN i$ is a uniformly computable sequence of left coset representatives for $V$ in $H$. By (ii) of the theorem, the sequences of (code numbers for) compact open cosets $K_i:= a_i U$ and $S_i:= \Psi(K_i)= \Psi(a_i)V$ are uniformly computable. 

By \cref{lem:lahm}(ii),  we have a ``local" computable  inverse $\Theta_i \colon S_i \to K_i$ of the restriction $\Psi_i \uhr {K_i}$, given by  uniformly partial computable functions $Q_i$ with arguments in $\Tree H \times \NN$, according to \cref{def:Comp fcn on tree}. To show that~$\Psi^{-1}$ is computable, intuitively, using  a path $h \in H$ as an oracle, compute the   $i$ such that  $h \in S_i$, and output $\Theta_i(h)$. More formally, define a  partial computable function $Q$ as follows: given $\sss \in \Tree H$ and $n \in \NN$, search for $i$ such that $[\sss]_{\Tree H} \sub S_i$. If~$i$ is found, simulate the computation for  $Q_i(\sss,n)$ and give the corresponding output. 
\end{proof} 

\subsection{Modular function, and Cayley-Abels graphs}
In Subsection~\ref{s:background} we discussed  the modular function $\Delta \colon G \to \RR^+$.  As our   second  application of \cref{prop: comp isom}, we show that  for any computable presentation, the modular function   is computable.   \begin{cor}  \label{cor:delta} Let $G$ be computably t.d.l.c.\ via  a Baire presentation $([T],  \Op, \Inv)$. Then the  modular function $\Delta \colon [T] \to \QQ^+$ is computable. \end{cor}
\begin{proof} We use the notation  of \cref{prop: comp isom}. Let $V \in \+ W$ be any subgroup.
Given $g \in [T]$,  by (ii) of the theorem compute $A=g V$. Compute $U = A \cdot A^{-1} \in \+ W$ such that  $A $ is a right coset of $U$, and hence $A = Ug$. For any left Haar measure $\mu$ on $G$, we have \bc $\Delta(g)= \mu(A)/\mu(U)=  \mu(V)/\mu(U)$.  \ec  By \cref{rem:Haar computable} we can choose $\mu$ computable; so this suffices to determine $\Delta(g)$.  
 \end{proof}

Our third application of the theorem is to show computability of the Cayley-Abels graphs, discussed in the introduction.   Let $G$ be a t.d.l.c.\ group that is   compactly generated, i.e., algebraically generated by a compact subset. Then there is a compact open subgroup $U$, and a     set $S = \{ s_1, \ldots, s_k\} \sub G$ such that $S = S^{-1}$ and $U \cup S$ algebraically generates $G$. The \emph{Cayley-Abels graph} $  \Gamma_{S,U}= (V_{S,U}, E_{S,U})$ of $G$ is given as follows. The  vertex set $V_{S,U}$ is the set $L(U) $ of left cosets of $U$, and the edge relation is   
\bc $E_{S,U} = \{ \la gU, gsU \ra \colon \, g \in G, s \in S\}$. \ec
Some background and      original references are given  in Section 5 of \cite{Willis:17}. For more detailed background see Part~4 of \cite{Wesolek:18}, or \cite[Section~2]{Kron.Moller:08}.  If $G$ is discrete (and hence finitely generated), then $\Gamma_{S,\{1\}}$ is the usual Cayley graph for the generating set~$S$.  Any two  Cayley-Abels graphs of $G$  with the shortest path distance   are quasi-isometric. Here a quasi-isometry between metric spaces $(X,d_X)$ and $(Y,d_Y)$  is a map $\psi \colon X\to Y$ such that for some $k,c \in \NN^+$, one has $\forall y \in Y \exists x \in X \, d_Y(\psi(x), y ) \le c$, and   $\forall x, x' \in X [ \frac 1 k d_X(x,x')-c  \le d_Y(\psi(x), \psi(x')) \le k d_X(x, x') +c] $.  Also see \cite[Definition~3]{Kron.Moller:08} or \cite{Wesolek:18}.

\begin{thm} \label{prop:CA graph} Suppose that $G$ is computably t.d.l.c.\ and compactly generated.   \bi \item[(i)] Each Cayley-Abels graph $\Gamma_{S,U}$ of $G$ has a computable copy $\+ L$. Given a Haar computable copy $\+ W$ of the meet groupoid $\+ W(G)$, one  can obtain $\+ L$ effectively from    $U \in \+ W$ and the left cosets $C_i = s_iU$, where $\{ s_1, \ldots, s_k\} $ is as above.   

\item[(ii)] If $\Gamma_{S',V}$ is another Cayley-Abels graph obtained as above, then $\Gamma_{S,U}$ and  $\Gamma_{S',V}$ are computably quasi-isometric. 

\item[(iii)] Given a computable Baire presentation of $G$ based on a tree $[T]$, let $\+ W= \+ W_{\text{comp}}(G)$  be the computable copy of its  meet groupoid  as in  \cref{def:Wcomp}. Then  the left action  $[T] \times \+ L  \to \+ L$ is also computable.   \ei \end{thm} 
\begin{proof} 
 (i) For the domain of the computable copy $\+ L$,   we take the computable set of left cosets of $U$.    We show that the edge relation is first-order definable from the parameters in such a way that it can be verified to be computable as well. 

Let $V_i = C_i \cdot   C_i^{-1}$ so that $C_i$ is a right coset of $V_i$. Let $V =  U \cap \bigcap_{1 \le i \le k} V_i$. To first-order  define $E_\Gamma$ in $\+ W$ with the given parameters, the idea is to replace the elements  $g$ in the definition of $E_\Gamma$ by left cosets $P$ of $V$, since they are sufficiently accurate approximations to $g$.
 It is easy to verify that $\la A, B \ra \in E_\Gamma$ $\LR$ 
 \bc $\ex i \le k \ex P \in L(V) \ex Q \in L(V_i) \, [ P \sub A \lland P \sub Q \lland B = Q \cdot C_i]$,  \ec
 where $L(U)$ denotes the set of left cosets of a subgroup $U$: For the implication ``$\LA$", let $g\in P$; then we have $A = gU$ and $B = gs_i U$.  For the implication ``$\RA$", given $A = gU$ and $B= gs_i U$, let $P\in L(V)$ such that $g \in P$. 
 
 We verify that the edge relation  $E_\Gamma$ is computable. Since $\+ W$ is Haar computable, by the usual enumeration argument we can obtain  a strong index for the set of  left cosets of $V$ contained in $A$. Given  $P$ in this set and $i \le k$, the left coset  $Q= Q_{P,i}$ of $V_i$ in the expression above is unique and   can be  determined effectively. So we can test whether $\la A, B \ra \in E_\Gamma$ by trying all $P$ and all $i\le k$ and checking whether $B = Q_{P,i} \cdot C_i$. 

 It is clear from the argument that we obtained $\+ L$ effectively from the parameters $U$ and $C_i$.  
 
 \smallskip
 
\n (ii)   First suppose that $V \sub U$. There is a computable map $\psi \colon L(U) \to L(V)$ such that $\psi(A) \sub A$ for each $A\in L(U)$. The proof of \cite[Thm.\ 2$^+$]{Kron.Moller:08} shows that $\psi \colon \Gamma_{S,U} \to \Gamma_{S',V}$ is a quasi-isometry. In the general  case, let $R\sub G$ be a finite symmetric set such that $(U \cap V) \cup R$ algebraically generates $G$. There are  computable quasi-isometries $\phi \colon \Gamma_{S,U} \to \Gamma_{R, U \cap V}$ and  $\psi \colon \Gamma_{S',V} \to \Gamma_{R, U \cap V}$ as above. There is a computable quasi-isometry $\theta: \Gamma_{R, U \cap V} \to  \Gamma_{S',V}$ inverting $\psi$: given a vertex $y\in L(U \cap V)$, let    $x= \theta(y)$ be a vertex in $L( V)$ such that  $\psi(x)$ is at distance at most $c$ from $y$, where $c$ is a constant for  $\psi$  as above.   Then $\theta \circ \phi$ is a quasi-isometry as required. 
  (iii)    follows immediately from  Theorem~\ref{prop: comp isom}(ii). 
\end{proof}

 \subsection{Algorithmic properties of the scale function} \label{s:scale}
 
In Subsection~\ref{s:background} we discussed  the scale function $s \colon G \to \NN^+$ for  a t.d.l.c.\ group~$G$,   introduced by  Willis~\cite{Willis:94}. Recall that  for a compact  open subgroup~$V$ of $G$ and an element $g\in G$ one defines $m(g,V) = |gVg^{-1}\colon V \cap gVg^{-1}|$,   and   
\bc $s(g)= \min \{ m(g,V)\colon \,  V \text{ is a compact open subgroup}\}$. \ec  
 Willis    proved  that the scale function is continuous,  where $\NN^+$ carries the discrete topology. He  introduced the relation that  a compact open subgroup $V$ is  \emph{tidy for~$g$}, and showed that this condition is equivalent to being   minimizing for $g$ in   the sense that $s(g)= m(g,V)$. M\"oller  \cite{Moeller:02} used graph theoretic methods to   show  that  $V$ is minimizing for $g$ if and only if  $ m(g^k, V) = m(g,V)^k$ for each $k\in \NN$. He also derived the ``spectral radius formula": for any compact open subgroup $U$, one has $s(g)= \lim_k m(g^k, U)^{1/k}$. 
 
 For  this section, fix a  computable Baire presentation $([T], \Op, \Inv)$  of a t.d.l.c.\ group $G$ as in Definition\ \ref{def:main1}.   Let $\+ W = \+ W_{\text{comp}}(G)$ be the   Haar computable copy of $ \+ W(G)$ given by \cref{def:Wcomp}. Recall that  the domain of~$\+ W$ is a computable set~$D$.  Via $\+ W$,  we can  identify compact open cosets of $G$ with natural numbers. 
 The following is immediate from  \cref{thm:main2} and   \cref{prop: comp isom}.
\begin{fact}  \label{fact:mg}The function $m \colon [T]  \times D \to D$ (defined to be $0$ if the second argument  is not a subgroup) is computable. \end{fact}

 
Here and in Section~\ref{s:scale noncomp}, we will  study whether   the scale function, seen as a function  $s\colon [T]\to \NN$,   is computable in the  sense of   \cref{def:computable 1}.  We note that neither M\"oller's  spectral radius formula, nor the tidying procedure of Willis (see again \cite{Willis:17}) allow  to compute the scale  in our sense.
The scale  is computable iff one can algorithmically decide whether a subgroup is minimizing:
\begin{fact} The scale function  on $[T]$ is computable {iff} the  following function $\Phi$ is computable in the sense of Definition\ \ref{def:computable 1}: if   $g \in [T] $ and $V$ is a compact open subgroup of $G$, then $\Phi(g,V)=1$ if $V$ is minimizing for $g$;  otherwise $\Phi(g,V)=0$.      \end{fact}  
\begin{proof} \rapf An  oracle Turing machine with oracle $g$ searches for the first  $V$ that is minimizing  for $g$, and outputs $m(g,V)$. 

\lapf For  oracle $g$, given input $V$ check whether $m(g,V) = s(g)$.  If so output $1$, otherwise $0$. \end{proof}

We next provide a fact restricting  the complexity of the scale function.  We     say that a function $\Psi: [T] \to \NN$ is \emph{computably approximable from above} if there is a computable function $\Theta: [T] \times \NN \to \NN$ such that $\Theta(f ,r) \ge \Theta(f ,r+1)$ for each $f\in[T],r\in \NN$, and   \bc $\Psi(f)= k $ iff  $\lim_r \Theta(f ,r) =k$.  \ec
 \begin{fact} The scale function   is   computably approximable from above. \end{fact}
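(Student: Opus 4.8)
The plan is to write the scale as a minimum of the computable quantities $m(g,V)$ ranging over the (computably enumerable) compact open subgroups $V$, and then to approximate this minimum from above by taking the minimum over longer and longer initial segments of the enumeration, i.e.\ by a running minimum. Since every $m(g,V)$ is at least $s(g)$ and the minimum defining $s(g)$ is attained, such a running minimum is automatically nonincreasing and converges to $s(g)$, which is exactly what approximability from above requires.

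First I would fix a computable enumeration $(V_i)_{i\in\NN}$ of code numbers (in $\+ W=\+ W_{\text{comp}}(G)$) for the compact open subgroups of $G$. This is possible because the domain of $\+ W$ is $\NN$ and, by Haar computability, being a subgroup is the decidable property $A\cdot A = A$; as $G$ is infinite there are infinitely many such $V$, and one may allow repetitions if a total enumeration is desired. Now define $\Theta\colon [T]\times\NN\to\NN$ by $\Theta(g,r)=\min_{i\le r} m(g,V_i)$. To see that $\Theta$ is computable in the sense of \cref{def:computable 1}, let $P_m$ be a partial computable function representing $m$ as in \cref{fact:mg}, and set $P_\Theta(\sss,r)=\min_{i\le r}P_m(\sss,V_i)$ when all the values $P_m(\sss,V_i)$ for $i\le r$ are defined, leaving $P_\Theta(\sss,r)$ undefined otherwise. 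Conditions (1) and (2) of \cref{def:computable 1} for $P_\Theta$ are inherited from the corresponding conditions for $P_m$, using that each $m(\,\cdot\,,V_i)$ is total, so that a sufficiently long prefix of $g$ makes all $r+1$ of the computations $P_m(\,\cdot\,,V_i)$ halt simultaneously.

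It then remains to check the two defining properties. Monotonicity in $r$ is immediate, since enlarging the index set of a minimum can only decrease it, whence $\Theta(g,r)\ge\Theta(g,r+1)$. For the limit, every $m(g,V_i)\ge s(g)$ by the definition of the scale, and since $(V_i)$ exhausts all compact open subgroups we have $\min_i m(g,V_i)=s(g)$, a value attained at some index $i^{*}$ because $s(g)\in\NN^{+}$ is a genuine minimum. Hence $\Theta(g,r)=s(g)$ for all $r\ge i^{*}$, so $\lim_r\Theta(g,r)=s(g)$ as required. The only point needing care — and the closest thing to an obstacle — is the verification that $\Theta$ is computable in the precise oracle sense of \cref{def:computable 1}, rather than merely computable for each fixed $r$; this is exactly settled by the totality of $m$ in its path argument noted above, which guarantees that the finitely many computations $m(g,V_i)$, $i\le r$, all converge after reading one common finite initial segment of $g$.
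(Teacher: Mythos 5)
Your proposal is correct and follows exactly the paper's (one-line) argument: define $\Theta(g,r)$ as the running minimum of $m(g,V)$ over the first $r+1$ compact open subgroups in a computable enumeration, and observe that it is nonincreasing with limit $s(g)$. You have merely filled in the details the paper leaves implicit, namely the decidable enumeration of subgroups in $\+ W_{\text{comp}}(G)$ and the verification that the running minimum is computable in the oracle sense of Definition~\ref{def:computable 1}.
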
 
 \begin{proof} Let $\Theta(f,r)$ be the minimum value of $m(f,s)$ over all $s \le r$.  
 %
\end{proof}
The following example is well-known (\cite[Example 2]{Willis:17}); we include it to show that our framework is adequate as a general background for case-based approaches to computability for  t.d.l.c.\ groups  used in earlier works.
 \begin{example}[with Stephan Tornier] For $d \ge 3$, the scale function on $\Aut(T_d)$ in the computable presentation of \cref{ex:Td} is computable. \end{example}
 \begin{proof}
 An automorphism $g$ of $T_d$ has exactly one of three types (see  \cite{Figa.Nebbia:91}):  
 \begin{enumerate}
\item $g$ fixes a vertex $v$: then $s(g)= 1$ because $g$ preserves the stabilizer of $v$, which is a    compact open subgroup. 

\item $g$ inverts an edge: then $s(g)= 1$ because $g$ preserves     the set-wise stabilizer of the set of  endpoints of this edge.

\item $g$ translates along an axis (a subset of $T_d$ that is a homogeneous  tree of degree $2$): then $s(g) = (d-1)^\ell $ where  $\ell$ is the  length. 
To see this, for $\ell=1$   one uses as a minimizing subgroup  the compact open subgroup of automorphisms that fix two given adjacent vertices on the axis. For $\ell>1 $ one uses that $s(r^k)= s(r)^k$ for each $k$ and $r \in \Aut(T_d)$; see again~\cite{Willis:94}. 
\end{enumerate}
The oracle machine, with  a path corresponding to   $g \in \Aut(T_d)$ as an oracle, searches in parallel  for a witness to (1), a witness to (2), and a sufficiently long piece of the axis in (3) so that the shift becomes visible. It then outputs the corresponding value of the scale.
 \end{proof}

\section{Closure properties of the class of computably t.d.l.c.\ groups} \label{s:closure}
All computable presentations in this section will be Baire presentations (see \cref{def:main1}), and we will usually   view a t.d.l.c.\  group $G$  concretely as  a    computable Baire presentation. Extending the previous notation in the setting of   closed subgroups of $\S$, by $\Tree G$ we denote the c.l.c.\  tree underlying this computable Baire presentation.
The following is immediate. 
\begin{fact}[Computable closed subgroups]\label{fact:immediate} Let $G$ be a computably   t.d.l.c.\ group. Let $H$ be a closed subgroup of $G$,  and note that   $\Tree H$  is a subtree of $\Tree G$. Suppose that   $\Tree H$ is  computable, and hence c.l.c.    Then $H$ is computably t.d.l.c.\ via the Baire presentation based on $\Tree H$,  with   the operations of $G$  restricted  to~$H$. \end{fact}    
For instance, consider the closed subgroups  $U(F)$ of $\Aut(T_d)$ introduced by Burger and Mozes~\cite{Burger.Mozes:00}, where $d \ge 3$ and $F $ is a subgroup of  $S_d$. By \cref{ex:Td} together with the preceding fact, each group $U(F)$ is computably t.d.l.c.
For another  example, consider the computable Baire presentation of $\SL_2(\QQ_p)$   given by   \cref{prop: SL2}. Let $S$ be the c.l.c.\  subtree of $T$ whose  paths describe    matrices of the form  $\begin{pmatrix} r & 0\\0 & s \end{pmatrix}$ (so that $s= r^{-1}$). This yields   a computable Baire presentation of the  group $(\QQ_p^*, \cdot)$. 

\begin{proposition} \label{ex:GL2} For each prime $p$ and $n \ge 2$, the   group 
 $\GL_n(\QQ_p)$ is  computably t.d.l.c.
\end{proposition} 
\begin{proof} 
We   employ  the embedding $F\colon GL_n(\QQ_p) \to SL_{n+1}(\QQ_p)$ which extends  a matrix $A$ to the matrix $B$ where the new row and new column vanish except for the diagonal element (which    necessarily  equals $(\det A)^{-1}$).  Clearly there is a c.l.c.\ subtree $S$ of    the c.l.c.\  subtree of $T$  in  \cref{prop: SL2} for $n+1$ such that $[S] = \range (F)$. Now we apply \cref{fact:immediate}. \end{proof}

A further construction staying within the class of t.d.l.c.\ groups is the semidirect product based on a continuous action. In the effective setting, we use  actions that are computable in the sense of Section~\ref{s:comp notions}. For   computable actions in the more  general context of Polish groups, see  \cite{MeMo}.
\begin{prop}[Closure under computable semidirect products] \label{prop:sdprod}
Let $G, H$ be computably  t.d.l.c.\ groups. Suppose $\Gamma \colon G \times H \to H $ is a computable function (as in \cref{def:Comp fcn on tree})  that specifies an action of $G$ on $H$ via topological automorphisms. Then the topological semidirect product $L=G \ltimes_\Gamma H$ is computably t.d.l.c. \end{prop}

\begin{proof} Let $T$ be the tree obtained by   interspersing strings of the same length from the trees of $G$ and $H$, i.e.\ 
\bc $T= \{ \sss \oplus \tau \colon \, \sss \in \Tree G \lland \tau \in \Tree H\}$. \ec
It is clear that $T$ is a c.l.c.\ tree. Via the  natural bijection \bc $[T] \to [\Tree G] \times [ \Tree H]$, \ec  one  can write elements of $L$ in the form $\la g, h \ra$ where $g\in [\Tree G]$ and $h \in [\Tree H]$. 

By the standard  definition of a semidirect product (\cite[p.\ 27]{Robinson:82}), writing the operations for $G$ and $H$ in the usual group theoretic way, we have 
\begin{eqnarray*} \Op ( \la g_1, h_1 \ra, \la g_2, h_2\ra) &=& \la g_1 g_2, \Gamma(g_2, h_1) h_2\ra \\
\Inv( \la g, h \ra)  & = & \la g^{-1}, (\Gamma (g^{-1}, h))^{-1}\ra . \end{eqnarray*}
This shows that  $\Op$ and $\Inv$ are computable, and hence   yields a computable Baire presentation $([T], \Op, \Inv)$ for $L$. 
\end{proof} 

%

%
%
\begin{remark}  The foregoing proposition leads to a different proof that  $\GL_n(\QQ_p)$ is  computably t.d.l.c.\ (\cref{ex:GL2}).  To simplify notation we let $n=2$; it is not hard to generalise the argument below to a general $n$. As  mentioned after \cref{fact:immediate}, there is a computable Baire presentation of    $(\QQ_p^*, \cdot)$. We have $\GL_2(\QQ_p) = \QQ_p^* \ltimes_\Phi \SL_2(\QQ_p)$ via the inclusion embedding of   $\SL_2(\QQ_p)$, the embedding $q \to \begin{pmatrix} q & 0\\0 & 1\end{pmatrix}$ of $\QQ_p^*$, and the computable action $\Phi (q, \begin{pmatrix} a & b\\c & d \end{pmatrix}) = \begin{pmatrix} a & q^{-1} b\\qc & d \end{pmatrix}$.

Note that the two    computable Baire presentations of $GL_2(\QQ_p)$ obtained above are    computably isomorphic: one maps $(q, \begin{pmatrix} a & b\\c & d \end{pmatrix})$ to $\begin{pmatrix} qa & q b & 0\\c & d & 0 \\ 0 & 0 & q^{-1}\end{pmatrix}$. \end{remark}
 
 Given a sequence of t.d.l.c.\ groups $(G_i) \sNp i $,   the direct  product $\prod  \sNp i G_i$ is not t.d.l.c.\  in general. In \cite[Definition\ 2.3]{Wesolek:15} a local direct product  is described that retains the property of being t.d.l.c. This construction depends on the choices of    compact open subgroups $U_i$ of  $G_i$, for each~$i$: let $G= \bigoplus_i (G_i, U_i) $ consist of the elements   $f \in \prod_i G_i$ such that $f(i) \in U_i$ for sufficiently large $i$.  
 We have $G = \bigcup_{k\in \NN} H_k$ where $H_0= \prod_{i\in \NN^+} U_i$ and for $k>0$, $H_k= G_1 \times \ldots \times G_{k} \times \prod_{ i > k} U_i$.  The groups  $H_k$ are equipped with the product topology. A set $W \sub G$ is declared open if $W \cap H_k$ is open for each $k$. 
(In particular, $\prod_i U_i$ is a compact  open subgroup.)
 
  Fix a computable bijection $\la ., .\ra \colon \NN \times \NN^+ \to \NN$ such that $\la a,b\ra \ge \max (a,b)$. For a string $\sss \in \NN^*$ and $i>0$, by $\sss^{(i)}$ we denote the string $\tau$ of maximum length such that $\tau(k)= \sss(\la k, i\ra)$ for each $k < |\tau|$.  Similarly, for $f \colon \NN \to \NN$ we define $f^{(i)}$ to be the function such that $f^{(i)}(k)= f(\la k, i\ra) $ for each $k$.  Given uniformly computable  subtrees $B_i$ of $\NN^*$, by $B=\prod_i B_i$ we denote the computable tree $\{ \sss \colon \fa i  [ \sss^{(i)} \in B_i]\}$. Note that $[B]$ is canonically homeomorphic to $\prod_i [B_i]$ via $f \to (f^{(i)})\sN i$. So we can specify a path $f$ of $B$ by specifying all the $f^{(i)}$.

\begin{prop}[Local direct products] \label{prop:direct products} Let $(G_i)\sNp i$ be computable Baire presentations $(T_i, \Op_i, \Inv_i)$ uniformly in $i$, and suppose that there is $k \in \NN^+$ such that each $T_i$ is c.l.c.\ via $k$. Suppose further  that  $U_i$ is  a compact open subgroup of~$G_i$, uniformly in $i$.  Then $G=\bigoplus \sNp i  (G_i, U_i) $ is computably t.d.l.c. \end{prop} 


 \begin{proof}    The uniformity hypothesis on the $U_i$ means that  there is a computable function $q$ such that $\+ K_{q(i)}^{T_i}= U_i$. We use these data to build a computable Baire presentation $([T], \Op, \Inv)$ of  $G$. We  aim at defining uniformly c.l.c.\ trees $V_r$ such that as topological spaces, $[V_0]$ is homeomorphic to   $H_0$  defined above, and for $r>0$, $V_r$ is homeomorphic to $H_r - H_{r-1}$.  All these homeomorphism are canonically given, as can be seen during the construction of the $V_r$. 
Our Baire presentation of $G$   will~be   based on the c.l.c.\ tree
 \bc $T = \{ r \ape \sss \colon r \in \NN \lland \sss\in V_r\}$. \ec 
    It is easy to verify  that $[T]$ is homeomorphic to $G$, using that  the $H_r$ are open subgroups of $G$.

Towards  defining  the trees $V_r$,  let $R_i$ be the subtree of $T_i$ without leaves such that $[R_i]= U_i$, and let  $S_i$ be the subtree of $T_i$ such that $[S_i]= [T_i]-U_i$. 
 \begin{claim}  The trees $R_i, S_i$ are  c.l.c.\ trees uniformly in $i \in \NN^+$.  \end{claim}
 To check this, it suffices to show that these trees  are uniformly computable. Given $i$, let $F_i \sub \NN^*$ be  the finite set with strong index $q(i)$. Note that $R_i$ consists of the strings compatible with a string in $F_i$, which is a decidable condition uniformly in $i$.  To determine  whether $\tau \in S_i$, \bi \item[(i)] first check whether some prefix of $\tau $ is in $F$; if so, answer ``no". 
 \item[(ii)] Otherwise, using the conditions defining  c.l.c.\ trees    check whether $[\tau]_{T_i}$ is compact; if not answer ``yes". If so,  check whether $\tau$  has an extension longer that any string in $F_i$ that is not in $R_i$; if so answer ``yes", otherwise ``no".  \ei This shows the claim. 
\smallskip

 Now let $V_0= \prod_i R_i$, and for $r >0$ let 
 \bc $V_r= \prod_{1 \le i<r} T_i \times S_{r} \times \prod_{i> r} R_i$, \ec
 interpreted as subtrees of $\prod_i T_i$ in the obvious way. (So, $f$ is a  path of $V_r$ iff  $f^{(i)}$ is a  path of $T_i$  for $1\le i<r$,  $f^{(r)}$ is  \emph{not} in $U_{r}$, but   $f^{(i)}$ is in $U_i$ for $i>r$.) It is easy to check that the trees $V_r$ are uniformly c.l.c.

Uniformly in $r$, on $[V_r]$ we have a computable function $L_r$ given by  $L_r(f)^{(i)}= \Inv_i(f^{(i)})$. So there is a computable function $\Inv$ on $[T]$ given by  \bc $\Inv(r \ape f) = r\ape L_r(f)$. \ec
Next, for $r,s \in \NN$ and $r \ape f, s \ape g \in [T]$,  let \bc $\Op (r \ape f, s \ape g) = t \ape h$,  \ec
where $h$ is the function given by $h^{(i)} = \Op_i(f^{(i)}, g^{(i)})$, and $t \le \max (r,s)$ is the least number  such that $t=0$, or $t>0$ and $h^{(t)}\uhr { \max(F_{t})}\in S_{t}$. That is, we compute the binary group operation componentwise, and then check which tree $V_t$ the overall result is a path of; this   can be done because from $\max(r,s)$  on,  the component of the  result will be in the  relevant compact open subgroup.

It should be clear that,  via the homeomorphism of the spaces $[T]$ and $G$ outlined above, $([T], \Op, \Inv)$ is a computable Baire presentation of $G$.
 \end{proof}
 We note that the hypothesis that $T_i$ is c.l.c.\ via a fixed $k$ is one of notational convenience; what matters is that given $i$ we  can compute $\ell_i$ such that $T_i$ is c.l.c.\ via~$\ell_i$. This is so because, given a tree $T$ that is c.l.c.\ via $\ell$,  we can uniformly transform it into an equivalent tree $\wt T$ that is c.l.c.\ via $1$, by ``skipping" the levels $1, \dots, \ell-1$.

  \smallskip

The hardest result in this section will be tackled last:  being computably t.d.l.c.\ is preserved under taking quotients by computable normal closed subgroups. As an application we will  show that the  groups $PGL_n(\QQ_p)$ are computably t.d.l.c. For $n=2$ these groups have been the subject of much research; for instance,  they are homeomorphic to closed subgroups of $\Aut (T_{p+1})$ as shown by Serre \cite[Section~II.1]{Serre:80}.

  First we need some notation and preliminaries.  The variables  $\aaa, \beta$ etc.\ will range over  strings in $\NN^*$ without repetitions. 
The variables  $P, Q, R $  range over permutations of $\NN$.   Recall from Section~\ref{ss:Sinf} that in our setup the elements of $\S$ have the form $P\oplus P^{-1}$ where $P$ is a permutation of~$\NN$.  It appears that a crucial ``finitization" argument, \cref{cl:crucial}, is best shown in the setting of true permutations, rather than elements of $\S$ in our  setup. So we need two lemmas allowing us to pass back and forth between the two.
 
  We adopt the setting of Theorem~\ref{prop: comp isom}: let $G=([T], \Op, \Inv) $ be a computable Baire presentation,   let $\+ W$ be a Haar computable copy of  $\+ W(G)$  with domain $\NN$, and let  $\wt G$ be as detailed there; in particular,    $S= \Tree {\wt G}$ is  c.l.c.\ via $k=2$, so that   $[\sss]_S$ is compact for each string $\sss$ of length $\ge 2$.  
 Let  \bc $S_0= \{\ES\} \cup \{ \aaa \colon \, \ex \beta [ |\aaa| = |\beta| > 0 \lland \aaa \oplus \beta \in S\}$. \ec
 The first lemma verifies that $S_0$ is computable, and shows that from    a nonempty string in $S_0$ one can compute  the set of elements of $[S]$ with first component extending it. 
\begin{lemma} \label{lem: prelim1} (i) $S_0$ is a  computable tree. 

\n (ii) There is a computable function $B\colon S_0 - \{\ES\} \to \NN$ such that \bc $\+ K_{B(\aaa)}= \{ f \in [S] \colon  \aaa \prec f_0 \text{ where } f= f_0 \oplus f_1\}$. \ec
\end{lemma}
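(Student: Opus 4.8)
The plan is to analyze $S_0$ through the homeomorphism $\Phi\colon G \to \wt G$ of Theorem~\ref{thm:main}, under which a path $f = f_0 \oplus f_1 \in [S]$ corresponds to the automorphism $p = \Phi(g)$ acting by $A \mapsto gA$; here the even component $f_0$ records the values $p(A_i) = A_{\aaa(i)}$, i.e.\ the finite injection $A_i \mapsto A_{\aaa(i)}$ on $\+ W$. Throughout I take $U = A_0$ (with the standing convention that $0$ denotes a subgroup in $\+ W$), so that a nonempty string $\aaa$ fixes $p(U) = A_{\aaa(0)} =: L$, a left coset of $U$.

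For (i), that $S_0$ is a tree is immediate: if $\aaa \oplus \beta \in S$ and $\rho \prec \aaa$, then the corresponding initial segment $\rho \oplus (\beta\uhr{|\rho|})$ lies in $S$, so $\rho \in S_0$, and $\ES \in S_0$ by definition. For decidability I would note that $\aaa \in S_0$ iff the finite injection $A_i \mapsto A_{\aaa(i)}$ ($i < |\aaa|$) extends to some $p \in \wt G$: given such a $p$ one may take $\beta = p_1\uhr{|\aaa|}$, and conversely any $\aaa \oplus \beta \in S$ extends to a path of $[S]$ witnessing the injection. By the Claim in the proof of Theorem~\ref{thm:main}, this extendibility holds precisely when $A_{\aaa(i)} \cdot A_i^{-1}$ is defined for every $i < |\aaa|$ and $\bigcap_{i} A_{\aaa(i)}\cdot A_i^{-1} \neq \ES$. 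Since $\+ W$ is Haar computable, the groupoid product, inverse, meet, and the test for $\ES$ are all computable, so this condition is decidable.

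For (ii), the key point is that the cone $[(\aaa(0))]_S$, which fixes only the first entry, is already compact. Note first that $(\aaa(0)) \in S$, being a prefix of some $\aaa \oplus \beta \in S$. Under $\Phi$ we have $[(\aaa(0))]_S = \Phi(\{ g \in G \colon g A_0 = A_{\aaa(0)}\})$, and the latter set is a coset of the compact open stabilizer of $A_0$, hence compact open in $G$; as $\Phi$ is a homeomorphism, $[(\aaa(0))]_S$ is compact. Therefore condition (3) of Definition~\ref{def:comploccompact} applies with $\sss = (\aaa(0))$, furnishing a computable bound $H$ such that every $\rho \in S$ extending $(\aaa(0))$ satisfies $\rho(j) \le H((\aaa(0)), j)$ for all $j < |\rho|$. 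This bounds simultaneously the even entries (the $f_0$ part) and the odd entries (the $f_1$ part) of every extension. I would then let $B(\aaa)$ be the strong index of the finite set $u$ of all $\gamma \in S$ of length $2|\aaa|$ whose even part equals $\aaa$ and whose entries obey $\gamma(j) \le H((\aaa(0)), j)$; this $u$ is computable from $\aaa$, and each $\gamma = \aaa \oplus \beta$ has length $\ge 2$, so $[\gamma]_S$ is compact and $B(\aaa)$ is a legitimate code number. Finally I would verify $\+ K_{B(\aaa)} = \{ f \in [S] \colon \aaa \prec f_0\}$: inclusion $\sub$ holds since each $\gamma \in u$ has even part $\aaa$; for $\supseteq$, any $f$ with $\aaa \prec f_0$ restricts to a length-$2|\aaa|$ string $f\uhr{2|\aaa|}$ with even part $\aaa$ whose entries obey the bound, hence lies in $u$.

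The main obstacle is precisely this compactness of $[(\aaa(0))]_S$: a priori only cones of length $\ge 2$ are guaranteed compact, and bounding the inverse components $f_1$ of the extensions of $\aaa$ seems to require separately controlling $p^{-1}(U)$ via the inverse path. Recognizing that fixing the image of the single compact open subgroup $A_0$ already pins down a compact coset in $G$ — and hence, through $\Phi$, makes the length-one cone $[(\aaa(0))]_S$ compact — is what makes condition (3) applicable and dissolves that difficulty in one step.
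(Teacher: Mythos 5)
Your proof is correct, but it takes a genuinely different route from the paper's in both parts. For (i), the paper never invokes the extendibility criterion: from a single constraint $\aaa(A)=B$ it deduces $p(AA^{-1})=BA^{-1}$ for every $p$ extending $\aaa$, applies Lemma~\ref{lem: sst} to compute a finite set of candidates for each entry $\beta(k)$, and then decides $\exists\beta\,[\aaa\oplus\beta\in S]$ by testing the finitely many candidates against the computable tree $S$. Your route through the Claim in the proof of Theorem~\ref{thm:main} (definedness of the products $A_{\aaa(i)}\cdot A_i^{-1}$ together with nonemptiness of their meet) is more direct and stays entirely inside the Haar computable meet groupoid. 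For (ii), the paper simply takes $B(\aaa)$ to be the strong index of the set of strings $\aaa\oplus\beta$ in $S$, which is computable by the candidate bound already obtained in (i); you instead prove that the length-one cone $[(\aaa(0))]_S$ is compact, being the $\Phi$-image of the compact coset $\{g\colon gA_0=A_{\aaa(0)}\}$, and then invoke condition (3) of Definition~\ref{def:comploccompact} for $S$ to bound all entries of the length-$2|\aaa|$ extensions in one stroke. Both constructions yield the same finite set of strings. One caveat you should be aware of: your appeal to condition (3) at a string of length $1$ goes beyond what Claim~\ref{cl:fcuk} explicitly exhibits, since there the branching function $H$ is only constructed for strings of length $2$; any $H$ valid at $(\aaa(0))$ must in particular bound the code of $p^{-1}(A_0)$ given only $p(A_0)=A_{\aaa(0)}$, which is precisely the computation the paper carries out via Lemma~\ref{lem: sst}. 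You are formally entitled to the step, because the setting of the section asserts that $S$ is c.l.c.\ and Definition~\ref{def:comploccompact}(3) quantifies over all strings with compact cone, but the work you are black-boxing there is essentially the content of the paper's own argument.
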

\begin{proof}  (i)  We first show that there is a computable bound $H(k)$ on  $\beta(k)$ that is uniformly obtained from $\aaa$.  Since $\aaa$ is nonempty, we have  $\aaa(A)= B$ for some $A,B \in \+ W$. Hence $f(AA^{-1})= BA^{-1}$ for any $f \in \wt G$ such that  $\aaa\prec f$.  By Lemma~\ref{lem: sst}  with $U=  AA^{-1} $,  $L = BA^{-1}$ and $F$ the coset denoted by $k$, we obtain  a bound $H(k)$ as required. This shows that from $\aaa$ one    can compute a finite set of possible candidates for $\beta$.  So $S_0$ is computable.

\smallskip

\n (ii)  Given a nonempty  $\aaa \in S_0$, by the argument above we can compute  a strong index $B(\aaa) $ for the set of strings of the form $ \aaa \oplus \beta  $ in $S$. \end{proof} 

The second lemma takes a   string  in $S^{[\ge 2]}$ and writes the paths of  $S$ extending it in terms of a finite set of strings in $S_0$.
\begin{lemma} \label{lem:prelim2} Given $\sss \in S$ such that $\sssl \ge 2$, one can compute (a strong index for) a finite set $F \sub S_0$ such that \bc $ [\sss]_S= \bigcup\{ \+ K_{B(\aaa)} \colon \, \aaa \in F\}$. \ec \end{lemma}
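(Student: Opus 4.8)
The plan is to reduce the two-sided constraint defining $[\sss]_S$ --- which restricts both a path's permutation component and its inverse --- to finitely many one-sided constraints on the permutation component alone, each of which is recorded by a string in $S_0$ and hence handled by $B$. Writing a path of $S$ as $f = f_0 \oplus f_1$ (so $f_1 = f_0^{-1}$), recall that $f \in [\sss]_S$ is equivalent to $f_0$ extending the finite injection $\aaa_\sss$ determined by $\sss$ via (\ref{eqn:inj}): the even-position entries $\sss_0$ of $\sss$ prescribe the values $f_0(i) = \sss_0(i)$, while the odd-position entries $\sss_1$ prescribe initial values of $f_1 = f_0^{-1}$, that is, $f_0(\sss_1(j)) = j$. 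First I would compute from $\sss$ this finite injection, call it $p$, together with $M = 1 + \max(\dom p)$, the least number with $\dom p \sub \{0, \ldots, M-1\}$; since $\dom p \supseteq \{0,\ldots,|\sss_0|-1\}$ and $|\sss_0| \ge \sssl/2$, we have $2M \ge \sssl$, and $M \ge 1$ so every string produced below is nonempty and lies in $\dom (B)$.

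Since $\sssl \ge 2$, the cone $[\sss]_S$ is compact, so by c.l.c.-ness of $S$ (Def.\ \ref{def:comploccompact}(3)) there is a computable bound $H$ with $\rho(i) \le H(\sss, i)$ for every $\rho \in S$ extending $\sss$. Using $H$ I would run the finite, decidable search over all $\rho \in S$ with $\rho \succeq \sss$ and $|\rho| = 2M$, and let $F$ be the set of their first components $\rho_0$ (the length-$M$ strings of even-position entries). Each $\rho_0$ is by definition the first component of a string $\rho \in S$, hence $\rho_0 \in S_0$, so $F \sub S_0$ and a strong index for $F$ is computed outright.

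It then remains to verify $[\sss]_S = \bigcup_{\aaa \in F} \+ K_{B(\aaa)}$, where $\+ K_{B(\aaa)} = \{ g \in [S] : \aaa \prec g_0\}$ by Lemma~\ref{lem: prelim1}(ii). For $\sub$: given $f \in [\sss]_S$, the prefix $\rho = f\uhr{2M}$ lies in $S$, extends $\sss$, and has length $2M$, so $\rho_0 = f_0\uhr M \in F$ and $f_0\uhr M \prec f_0$, whence $f \in \+ K_{B(f_0 \uhr M)}$. For $\supseteq$: every $\aaa \in F$ extends $p$. Indeed if $\aaa = \rho_0$ with $\rho = \aaa \oplus \beta \succeq \sss$, then $\aaa \succeq \sss_0$ gives $\aaa(i) = \sss_0(i) = p(i)$ for $i < |\sss_0|$, while $\beta \succeq \sss_1$ together with the inverse relation $\aaa(\beta(j)) = j$ of $\Tree \S$ gives $\aaa(\sss_1(j)) = j = p(\sss_1(j))$ for $j < |\sss_1|$ (here $\sss_1(j) \in \dom p \sub \{0,\ldots,M-1\}$, so all terms are defined). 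Consequently, if $\aaa \prec g_0$ then $g_0 \supseteq \aaa \supseteq p$, so $g_0 \succeq \sss_0$ and $g_1(j) = g_0^{-1}(j) = \sss_1(j)$ for $j < |\sss_1|$, i.e.\ $g \succeq \sss$ and $g \in [\sss]_S$.

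I expect the only genuine obstacle to be the bookkeeping in this last paragraph: one must check carefully that choosing $M$ strictly beyond the whole domain of $p$ makes a single initial-segment constraint $\aaa \prec g_0$ on the permutation component capture the full two-sided constraint $g \succeq \sss$, the bridge being precisely the defining inverse relation of $\Tree \S$ that ties $g_1$ to $g_0^{-1}$. Everything else is an effective finite search justified by the compactness of $[\sss]_S$ and the c.l.c.\ bound $H$, together with the computability of $S_0$ and $B$ from Lemma~\ref{lem: prelim1}.
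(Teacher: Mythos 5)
Your proof is correct and follows essentially the same route as the paper's: both reduce $[\sss]_S$ to the finite union of one-sided cones $\+ K_{B(\aaa)}$ over all strings $\aaa$ of a fixed sufficiently large length that extend the injection $\aaa_\sss$, the key point being that once the length covers $\dom(\aaa_\sss)$, the constraint on the permutation component alone forces the inverse component via the defining relation of $\Tree \S$. The only cosmetic differences are your choice of length ($1+\max(\dom \aaa_\sss)$ versus the paper's $\max(\sss)+1$) and that you justify the finite search via the c.l.c.\ bound $H$ rather than citing Lemma~\ref{lem: sst} directly, which amounts to the same thing.
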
 

\begin{proof} Let $n = \max (\sss)+1$. Let $F$ consist of the strings $\aaa \in S_0$ of length $n$ such that viewed as an injection, $\aaa$ extends the injection $\aaa_\sss$ associated with $\sss$ as in (\ref{eqn:inj}).    Since $\sssl \ge 2$,  by Lemma~\ref{lem: sst} we can compute a strong index for $F$.
If  $\aaa \in F$ and $P \succ \aaa$, then $P \oplus P^{-1} \in [\sss]_S$, because $P$ extends $\aaa_\sss$. Conversely, if $P \oplus P^{-1} \in [\sss]_S$, then $P \uhr n \in F$. \end{proof}

 \begin{thm}  \label{thm:closure normal} Let $G$ be computably t.d.l.c. Let $N$ be a closed normal subgroup of~$G$ such that $\Tree N$ is a computable  subtree of $\Tree G$. Then $G/N$ is computably t.d.l.c. \end{thm}
%
 \begin{proof} We     continue to adopt the setting of   \cref{prop: comp isom}.  Recall that  $S = \Tree { \wt  G}$, and $S_0= \{\ES\} \cup \{ \aaa \colon \, \ex \beta [ |\aaa| = |\beta| > 0 \lland \aaa \oplus \beta \in S\}$.     Let $M= \Phi(N)$ where $\Phi$ is the bicomputable  homeomorphism $G \to \wt G$ established in \cref{prop: comp isom}.  Note that  $\Tree {M}$ is a computable  subtree of $S$; given $\tau \in S$, one can search for a string $\sss \in \Tree G$ such that $P_\Phi(\sss) \succeq \tau$; then $\tau \in \Tree {M}$ iff $\sss \in \Tree N$.
 
 We will build a Haar computable copy $\+ V$ of $\+ W(\wt G/M)$. We use that each compact open subset of $\wt G/M$ has the form $M \+ K$ where $\+ K$ is a compact open subset of $\wt G$.
  In the first step,  we will show that the   pre-ordering ``$\+ K \sub M \+ L$" is decidable, where  $\+ K, \+ L$ range over       compact open sets.   In the second step,  we will use as the domain of $\+ V$ the least numbers  in  the  classes of the computable equivalence relation associated with this pre-ordering.

  We may assume that $\+ K= [\sss]_S$ for some string $\sss\in S$ of length at least $2$. So the following suffices for the first step:

 \begin{lemma} \label{lem:stupid} Given strings $\sss, \tau_1, \ldots, \tau_r \in S$ of length at least $2$, one can decide whether the inclusion $[\sss]_S \sub \bigcup_i M [\tau_i]_S$ holds. \end{lemma}

 \n To verify the lemma, let 
   $  M_0 = \{ P  \colon P  \oplus P^{-1} \in M\}$. For each $\aaa \in S_0$, let 
\bc  $\underline  \aaa = \{P \in [S_0] \colon \, \aaa \prec P\}$. \ec
Recall  here that $P$  is a permutation of $ \NN$, not merely a path of $S_0$ (which in general could fail to be onto). Let \bc $T_0 = \{ \aaa \colon \, \ul \aaa \cap M_0 \neq \ES\}$. \ec 
By Lemma~\ref{lem:prelim2}, it is sufficient to decide whether a  version of the inclusion holds that only refers to the permutations,   not directly to  their inverses.
 \begin{claim} \label{cl:crucial} For $\aaa, \beta_1, \ldots, \beta_k \in S_0$ one can decide whether $ \ul \aaa \sub \bigcup_i  M_0 \ul {\beta_i}$. \end{claim}
\n We may assume that $|\aaa| \ge m:= 1+  \max_i  \max (\beta_i)$.  We show that  $m$ is the maximum  height on $T_0$  relevant for this inclusion:  for each $i$, 
\begin{equation} \label{eq: fffuuuccc} \ex Q  \in   M_0 \,  [ \ul a \sub Q  \circ \ul {\beta_i} ] \LR  \ex \eta \in T_0  \, [  |\eta| =m \lland  \ul \aaa \sub \ul \eta \circ \ul {\beta_i} ].  \end{equation}
For  the implication ``$\RA$", simply let $\eta = Q \uhr m$. 
For the implication ``$\LA$", fix  a permutation $Q \succ \eta$ such that  $Q \in  M_0$. Given $P \in \underline \aaa$, we can choose $R \in \ul \eta$ and $R' \in \ul{\beta_i}$ such that   $P= R \circ R'$. Since $R, Q \succ \eta$ and $|\eta|= m> \max (\beta_i)$, we have $Q^{-1} \circ R \circ R' \succ \beta_i$. So $P \in M_0 \circ  \ul{\beta_i}$ via $Q$. This verifies  the equivalence (\ref{eq: fffuuuccc}).

Using  (\ref{eq: fffuuuccc}), if $ \ul \aaa \sub \bigcup_i  M_0 \circ \ul {\beta_i}$, then because $|\aaa|> \max_i |\beta_i|$, 
we have  $\ul \aaa \sub M_0 \circ \ul  \beta_i$ for some  single $i$.  If $\eta$ works on the right hand side of (\ref{eq: fffuuuccc}) then $\eta(\beta_i(0))= \aaa(0)$. So by   Lemma~\ref{lem: sst} again, there is a computable bound $H(k)$ on  $\eta(k)$ that is uniformly obtained from the values   $\aaa(0), \beta_1(0), \ldots, \beta_k(0)$. This shows that   one can compute a strong index for a finite set containing all  potential witnesses $\eta$ on the right hand side of~(\ref{eq: fffuuuccc}). For each such $\eta$, using Lemma~\ref{lem: prelim1}(ii), it is equivalent to decide whether $\+ K_{B(\aaa)}  \sub \+ K_{B(\eta)}   \+ K_{B( \beta_i)}$, which can be done using Lemma~\ref{lem: decide inclusion}(ii). This shows the claim and hence verifies \cref{lem:stupid}.
%

\smallskip

By the lemma (and the discussion preceding it), the  equivalence relation on $\NN$ given  by
 \bc $A \sim B$ if $A M = B M$ \ec 
  is computable; recall here that $\+ W$ has domain $\NN$. 
As the domain $\+ D$ of the computable copy $\+ V$ of $\+ W(\wt G/M)$, we use    the computable set of least elements of equivalence classes.

We think of an  element $A$ of $\+ D$ as denoting  the compact open coset $A M$ of $\wt G/ M$. Given $A,B \in \+ W $ we   have $(A M)^{-1} = A^{-1} M$ and $(AM ) (B M)= (AB) M$. In particular, one can decide whether $A M $, viewed as a subset of $\wt G/M$,  is a left   coset of   a subgroup of $\wt G/ M$ that $B M$ is a right coset of.   So by \cref{lem:basic operations} the groupoid operations  are computable on $D$.

For the computability of the meet operation, suppose  $A,B \in \+ W$ are given.   One has  $AN \cap BN = (A \cap BN) N$. Suppose $A $ is a left coset of the subgroup $U$ and $B$ is a left coset of the subgroup $V$. Then $A \cap BN$ is a left  coset of the compact open subgroup $U \cap VN$.
  Note  that 
\bc $ A \cap BN = \bigcup \{ L \colon \, L \subseteq A \lland L \text{ is left coset of } U \cap V \lland  L \sim B\} $.\ec
(The inclusion $\supseteq$ is trivial. For $\sub$, if $x \in A \cap BN$, then $ x \in L$ for some left coset of $U \cap V$. Since $x = bn$ for some $b\in B, n \in M$ we have $LN = BN$.) So one can compute $C \in \+ W$ such that   $C= A \cap BN$ using \cref{lem: comp index tree}. Then one outputs the element $C'$ of $\+ D$  such that $C' \sim C$.

Finally, to show that $\+ V$ is Haar computable, note that $|UN : UN \cap VN|= |U \colon U \cap VN|$. By the above (in the special case that $A$ and $B$ are subgroups) one can compute $U \cap VN \in \+ W$. So one  can compute the index using that $\+ W$ is Haar computable. 
 \end{proof}
%

 \begin{example} For each prime $p$ and each $n \ge 2$, the group $\PGL_n(\QQ_p)$ is computably t.d.l.c. \end{example} 
 \begin{proof} 
 We use   the   computable Baire presentation $(T,   \Op, \Inv) $ of $\GL_n(\QQ_p)$ obtained in \cref{ex:GL2}.  In this presentation,  the centre $N$ of $\GL_n(\QQ_p)$ is given by the diagonal $(n+1) \times (n+1)$ matrices such that the first $n$ entries of the diagonal agree. So clearly $\Tree N$ is a  computable  sub-tree  of the tree $S$ in \cref{ex:GL2}. Hence we can apply \cref{thm:closure normal}.
 \end{proof}

\section{Uniqueness of computable  presentations} \label{s:autostable} 
As discussed in Subsection~\ref{ss:auto}, in   computable  structure theory a countable structure is called  {autostable}, or computably categorical, if  it has a computable copy, and all  its computable copies are computably isomorphic.  We adapt this notion to the present setting. 

\begin{definition} A  computably t.d.l.c.\ group $G$ is called \emph{autostable} if for any two computable Baire presentations of $G$, based on    trees  $T, S\sub \NN^*$, there is a computable group homeomorphism $\Psi \colon [T] \to [S]$.   \end{definition}

Note that $\Psi^{-1}$ is also  computable  by \cref{cor:inverse}. %
For  abelian profinite groups, the notion of autostability used in \cite{Pontr} is equivalent to our definition. This follows from   the proofs of Prop.~\ref{prop:proctotdlc} and Prop.~\ref{lem:proc}, which show that in the  abelian case the correspondence between Baire presentations and   procountable  presentations is uniform and   witnessed by uniformly obtained group-isomorphisms between these presentations.  The first author  \cite[Cor.\ 1.11]{Pontr}     characterized  autostability for  abelian compact pro-$p$ groups  given by  computable procountable   presentations with effectively finite kernels:  such a group is autostable iff its Pontryagin - van Kampen dual is autostable.  
 For instance,    $(\ZZ_p, +)$ is autostable  because  its dual  is the Pr\"ufer group $C_{p^\infty}$, which is easily seen to be autostable as a countable structure (see the proof of \cref{thm:autostable} below).

We   provide a criterion for autostability, and evidence its usefulness  through various examples. 
\begin{criterion} \label{thm:compCrit} A  computably  t.d.l.c.\ group $G$  is autostable $\LR$  any two Haar computable copies of   its meet groupoid $\+ W(G)$ are computably isomorphic. \end{criterion}
We will only apply the implication ``$\LA$". However,  the converse implication  is interesting on its own right because it shows that our notion of autostability is independent of whether we use computable Baire presentation, or  computable presentations based on    meet groupoids. In fact, the proof of  the converse  implication  shows that we could also use presentations  based on closed subgroups of $\S$.

\begin{proof} 
We may assume  that $G$ itself is a computable Baire presentation.  As before,  let $\+ W = \+ W_{\text{comp}}(G)$ as in \cref{def:Wcomp}, and let   $\wt G= \+ G_\text{comp} (\+ W)$ as in \cref{def:Gof}.  Let $\Phi \colon G \to \wt G$   be the group homeomorphism given by  Theorem~\ref{prop: comp isom}.

\medskip

 \lapf   Let $H $ be a   computable Baire presentation such that $G \cong H$. Let $ \+ V= \+ W_{\text{comp}}(H)$, and let $ \wt H = \+ G_\text{comp} (\+ V)$. 
  Clearly $G \cong H$ implies $\+ W \cong \+ V$.
   Since $\+ V$ is Haar computable, by hypothesis there is a computable isomorphism $\theta \colon \+ W \to \+ V$; note that $\theta$ is a permutation of $\NN$, so $\theta^{-1}$ is also computable. 
We define a computable homeomorphism $\wt \theta \colon \wt G \to   \wt H$ as follows. Given $p= f\oplus f^{-1} \in \wt G$, let 
\begin{equation} \label{eqn:dual} \wt \theta(p) = (\theta \circ f \circ \theta^{-1} \oplus \theta \circ f^{-1} \circ \theta^{-1} ). \end{equation} 

Using that $\theta$ is an isomorphism of meet groupoids, it is easy to verify that   $\wt \theta$ is a homeomorphism. Clearly $\wt \theta$ is computable. The inverse of $\wt \theta$ is given by exchanging $\theta$ and $\theta^{-1}$ in the above, and hence  is computable as well.

 Let  $\Psi \colon H \to \wt H$ be the homeomorphisms given by  Theorem~\ref{prop: comp isom} with $H$ in place of $G$. We have a group homeomorphism  $ \Psi^{-1} \circ \wt \theta \circ \Phi \colon G \to H$, which is computable as a composition of computable maps.  Also, its inverse is computable because the inverse of $\wt \theta$ is computable. 
%
 \medskip
 
 \rapf   
 Suppose   $\+ V$ is a Haar computable meet groupoid such that $\+ W \cong \+ V$ via an isomorphism $\beta$. We need to show that $\+ W$ and  $ \+ V$ are computably isomorphic. To this end,    let  $\wt H = \+ G_\text{comp} (\+ V)$, which is a computable Baire presentation of a t.d.l.c.\ group (\cref{cl:fcuk}). Let   $\wt {\+ W} = \+ W_{\text{comp}}(\wt G)$ and $\wt {\+ V} = \+ W_{\text{comp}}(\wt H)$. Let $\aaa_\+ W \colon \+ W \to \wt {\+ W}$ and $\aaa_\+ V \colon \+ V \to \wt {\+ V}$   be the maps given by \bc $\aaa_\+ W(A_U)= \{ p \in \wt G \colon \, p_0(U)= A\}$ and $\aaa_\+ V(B_U)= \{ q \in \wt H \colon \, q_0(U)= B\}$,  \ec
 where the notation $A_U$ indicates that $A $ is a left coset of $U$, and as usual $p = p_0 \oplus p_1$, etc.  Informally, $\wt{\+ W}$ is the double dual of $\+ W$, and $\aaa_\+ W$ maps $\+ W$ into its double dual; a similar statement holds for $\wt {\+ V}$, $\+ V$ and $\aaa_\+ V$.  An argument similar to the one in the proof of \cref{lem: prelim1}(ii) shows that the maps  $\aaa_\+ W$ and $\aaa_\+ V$  are computable.

 Since  $\Phi\colon G \to \wt G$ is a group homeomorphism,  its dual $\wt \Phi \colon \+ W \to \wt {\+ W}$ is a meet groupoid  isomorphism, where  $\wt \Phi (A) = \{\Phi(g) \colon \, g \in A\}$. The  following argument  uses that $\+ W \cong \+ W(G)$ for the t.d.l.c.\ group $G$ (and hence cannot be carried out  for $\aaa_{\+ V}$).

 \begin{claim} \label{cl:aaaW}  $\aaa_\+ W$  is a meet groupoid  isomorphism. \end{claim}
It suffices to show  that $\aaa_\+ W = \wt \Phi$.  To see this, let $A_U\in \+ W$.  For the inclusion  $\wt \Phi(A) \sub \aaa_\+ W(A)$, if  $g \in A$ then clearly $  \Phi(g)(U)=gU = A$, so $\Phi(g) \in \aaa_{\+ W}(A)$. For the converse inclusion, suppose $p_0(U)= A$ for $p\in \wt G$,  and let $g = \Phi^{-1}(p) $. Then $g\in A$ because $\Phi(g)(U)= p_0(U)= A$. This verifies the claim.
 
 Let $\Gamma  \colon \wt{\+ W} \to \wt{\+ V}$ be the ``double dual" of the isomorphism $\beta\colon \+ W \to \+ V$; that is,  $\Gamma(A) = \{ \wt \beta (p) \colon \, p \in A\}$, where $\wt \beta $ is the dual of $\beta$ defined as in \cref{eqn:dual} with $\beta$ in place of $\theta$. Note that $\Gamma$ is an isomorphism of meet groupoids. 
 
 \begin{claim} The   diagram  $\xymatrix{
    \wt {\+ W }   \ar[r]^\Gamma & \wt{\+ V}  \\
    \+ W \ar[u]^{\aaa_\+ W}      \ar[r]_\beta                   & \+ V \ar[u]_{\aaa_\+ V} }$ commutes. 
    
    \n Hence  $\aaa_\+ V$ is an isomorphism of  meet groupoids. 
    \end{claim}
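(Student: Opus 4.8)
The plan is to verify the square commutes by a direct pointwise chase and then read off that $\aaa_\+ V$ is an isomorphism from the fact that the other three maps are isomorphisms. First I would record that every element of $\+ W$ can be written as $A_U$ with $U = A^{-1}\cdot A$, so it suffices to check the identity $\Gamma(\aaa_\+ W(A_U)) = \aaa_\+ V(\beta(A_U))$ on such elements. Unwinding the definitions, the left-hand side is $\{\wt\beta(p) \colon p \in \wt G \lland p_0(U)=A\}$. For the right-hand side, since $\beta$ is a meet groupoid isomorphism it carries the left coset $A$ of $U$ to a left coset $\beta(A)$ of $\beta(U)$, so $\beta(A_U)=\beta(A)_{\beta(U)}$ and hence $\aaa_\+ V(\beta(A_U)) = \{q \in \wt H \colon q_0(\beta(U)) = \beta(A)\}$.

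The heart of the argument is a single computation at the level of permutations. Writing $p = f \oplus f^{-1}$, the formula \eqref{eqn:dual} gives that the even part of $\wt\beta(p)$ is the conjugate $\beta \circ f \circ \beta^{-1}$, where $\beta$ is regarded as a permutation of $\NN$ (the common domain of $\+ W$ and $\+ V$). Consequently $(\wt\beta(p))_0(\beta(U)) = \beta(f(\beta^{-1}(\beta(U)))) = \beta(p_0(U))$. Thus $p_0(U)=A$ holds iff $(\wt\beta(p))_0(\beta(U)) = \beta(A)$, the implication from right to left using that $\beta$ is injective on $\NN$. Since $\wt\beta \colon \wt G \to \wt H$ is a bijection — it is the homeomorphism built exactly as $\wt\theta$ in the ``$\LA$'' direction, now with $\theta = \beta$ — the assignment $p \mapsto \wt\beta(p)$ identifies the two displayed sets, which establishes commutativity.

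I expect the only delicate point to be the bookkeeping in this permutation computation: one must keep straight that $p_0(U)$ means applying the permutation part of $p$ to the code number denoting $U$, and that $\wt\beta$ acts precisely by conjugating that permutation part by $\beta$. Once these conventions are fixed the chain of equalities above is immediate, with no further appeal to the structure of $\wt G$ or $\wt H$ beyond $\wt\beta$ being a bijection.

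Finally, from the commuting square I would conclude $\aaa_\+ V = \Gamma \circ \aaa_\+ W \circ \beta^{-1}$. Here $\beta$ is an isomorphism by hypothesis, $\aaa_\+ W$ is an isomorphism by Claim~\ref{cl:aaaW}, and $\Gamma$ is an isomorphism as already noted; hence $\aaa_\+ V$, being a composite of meet groupoid isomorphisms, is itself an isomorphism of meet groupoids, as required.
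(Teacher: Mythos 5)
Your proposal is correct and follows essentially the same route as the paper: the paper's proof is exactly the chain $\Gamma(\aaa_{\+ W}(A))=\{p^\beta\colon p\in\wt G,\ p_0(U)=A\}=\{q\in\wt H\colon q_0(\beta(U))=\beta(A)\}=\aaa_{\+ V}(\beta(A))$ with $p^\beta=\wt\beta(p)$ the conjugate by $\beta$, and your permutation computation $(\wt\beta(p))_0(\beta(U))=\beta(p_0(U))$ together with the bijectivity of $\wt\beta$ is precisely what justifies the middle equality that the paper leaves implicit. The concluding step, reading off that $\aaa_{\+ V}=\Gamma\circ\aaa_{\+ W}\circ\beta^{-1}$ is a composite of isomorphisms, likewise matches the paper's intent.
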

    To see this, if $A_U \in \+ W$, then
    \begin{eqnarray*} \Gamma(\aaa_\+ W(A)) & = &  \{ p^\beta  \colon p \in \wt G \lland \colon p_0(U)= A\} \\ 
&=&  \{ q \in \wt H\colon q_0(\beta(U)) =  \beta(A)\}  \\ &=& \aaa_\+ V(\beta(A)), \end{eqnarray*}
where $p^\beta:=\beta \circ f \circ \beta^{-1} \oplus \beta \circ f^{-1} \circ \beta^{-1}$, for $p= f \oplus f^{-1}$, similar to \cref{eqn:dual}.

 Since $G$ is autostable by hypothesis, and $\Phi \colon G \to \wt G$ is bicomputable, $\wt G$ is autostable. Since $\beta$ is an isomorphism, we have   $\wt G \cong \wt H$. Hence   there is a bicomputable  isomorphism $  \wt G \to \wt H$.  Inspecting the construction in the proof of the implication~``$\LA$" of~\cref{thm:main} shows that there is a computable isomorphism $\wt {\+ W} \to \wt {\+ V}$.  So  there is a computable isomorphism $ {\+ W} \to   {\+ V}$,  as required.
   \end{proof}

 \begin{remark}[\emph{Computable duality between t.d.l.c.\ groups and   meet groupoids}] \label{rem:duality} The post~\cite[Section~4]{LogicBlog:20}  axiomatizes the class $\mathbf M$ of countable meet groupoids  $\+ W$ that are isomorphic to $\+ W(G)$ for some t.d.l.c.\ group~$G$. Note that     \cref{def:Gof} of $\+ G_{\text{comp}}( \+ W)$  makes sense for any meet groupoid $\+ W$ with domain~$\NN$. Besides some basic algebraic axioms on meet groupoids (such as saying that different left cosets of the same subgroup are disjoint), one needs an axiom ensuring local compactness of $\+ G_{\text{comp}}( \+ W)$: there is a subgroup $K$ such that   each subgroup $U \sub K$ has only finitely many left cosets contained in $K$. Furthermore, one needs to say that the map $\aaa_{\+ W} \colon \+ W \to  \wt{\+ W}$ in \cref{cl:aaaW} is onto. In general,  this could fail:   consider the meet groupoid obtained from  a computable copy of $\+ W(\ZZ_p)$ by  deleting  all cosets of subgroup of the form $p^{2i+1} \ZZ_p$: its double dual $\+ W$ is isomorphic to  $\+ W(\ZZ_p)$.  
The required  ``completeness axiom" avoiding this situation, called CLC in \cite[Section~4]{LogicBlog:20}, states that if $N \in \+ W$ is normal  (i.e., each left coset of $N$ is also a right coset), and $\+ S \sub L(N)$ is finite and closed under inverse and product, then there is a subgroup $V \in \+ W$ such that $C \sub V \lra C \in \+ S$. (These axioms are sufficiently simple to imply that $\mathbf M$ is an arithmetical class.) Using the methods to prove \cref{thm:compCrit} one can proceed to showing that the operators $\+ W_{\text{comp}}$ and $\+ G_{\text{comp}}$,  restricted to the Haar computable meet groupoids in $\mathbf M$,  are inverses of each other. That is, composing one with the other  leads to a computable copy of the original structure that is computably isomorphic to it.  \end{remark}

\begin{theorem}  \label{thm:autostable} The computably t.d.l.c.\ groups   $\QQ_p$ and $\ZZ \ltimes \QQ_p$ are   autostable. \end{theorem}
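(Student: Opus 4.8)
The plan is to invoke \cref{thm:compCrit}: to prove that $\QQ_p$ (respectively $\ZZ \ltimes \QQ_p$) is autostable it suffices to show that any two Haar computable copies of its meet groupoid are computably isomorphic, and I would construct such an isomorphism by an effective back-and-forth built on the explicit description from \cref{ex:Qp}. Consider first $\+ W(\QQ_p)$. Its subgroups are exactly the $U_r = p^r \ZZ_p$ for $r \in \ZZ$; they form a chain under inclusion with $|U_r : U_{r+1}| = p$, and every coset is a ball $D_{r,a}$. In any Haar computable copy $\+ V$ the subgroups are decidable, inclusion is decidable, and by condition (c) of \cref{def:comp_meet_groupoid} the index is computable; hence from two subgroups $U \sub V$ one computes their \emph{level difference} $\log_p |V:U| \in \NN$, so the subgroup chain together with its levels is uniformly computable in each copy. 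Given two copies $\+ V_1, \+ V_2$, I would first align the chains: search each copy for one subgroup, declare the two to correspond, and extend to all subgroups by matching levels. This is admissible precisely because the automorphism $\alpha \mapsto p\alpha$ of $\QQ_p$ shifts the chain, so \emph{any} choice of origin is legitimate, and it yields a computable bijection between the subgroup chains that respects inclusion and index.

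The substantive work is extending this to all cosets. Here I would exploit the homogeneity of $\QQ_p$: for each $r$ the cosets of $U_r$ form a group isomorphic to the Pr\"ufer group $C_{p^\infty}$, and $\Aut(\QQ_p,+)$ acts on the tree of balls with enough transitivity that any isomorphism between finitely generated sub-meet-groupoids extends to an automorphism of the whole structure. I would render this effective as an \textbf{extension lemma}: given a finite partial isomorphism $\phi$ between finitely generated sub-meet-groupoids of $\+ V_1$ and $\+ V_2$ respecting the groupoid operation, meet, and index, and given a new coset $A \in \+ V_1$, one can compute an extension of $\phi$ whose domain contains $A$, and symmetrically for the range. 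The proof of the lemma reduces level by level to the standard effective back-and-forth for $C_{p^\infty}$: the already-placed cosets pin $A$ down to a residue class modulo the relevant subgroup, and a suitable witness in $\+ V_2$ (a coset refining or coarsening the placed ones in the prescribed group position) is located by searching, using decidability of inclusion and the index function to recognize it. Interleaving the forth and back steps then produces a computable isomorphism $\+ V_1 \to \+ V_2$.

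For $\ZZ \ltimes \QQ_p$ the subgroups are again exactly the $U_r$, but a general coset now carries an extra discrete parameter, having the form $g^z D_{r,a}$ with $z \in \ZZ$ recording the $\ZZ$-component, as computed in \cref{ex:Qp}. I would run the same scheme: the $\QQ_p$-part is handled as above, while the integer $z$ attached to a coset is recovered from the groupoid and meet data and matched directly. The point requiring genuine care is that the back-and-forth must respect the noncommutative groupoid multiplication, including the relation $g D_{r,a} = D_{r-1,a} g$; this is where the argument departs from the abelian case, since conjugation by $g$ now shifts the subgroup chain \emph{internally} and the scale is nontrivial, so the alignment of chains is no longer free but is instead determined by the group structure.

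The main obstacle I anticipate is exactly the effectiveness of the extension lemma for the coset layer. Because each $C_{p^\infty}$-level is divisible, moving down the chain offers infinitely many admissible refinements, and the search must be organized so that the partial isomorphism stays coherent across \emph{all} levels at once and still converges to a total bijection. Arranging the bookkeeping so that a choice forced at one level can never obstruct an extension at another is the delicate step, and in $\ZZ \ltimes \QQ_p$ it is compounded by the need to track the $\ZZ$-coordinate consistently through the noncommutative multiplication. This is what makes the proof require more effort than the bare reduction to the meet groupoid would suggest.
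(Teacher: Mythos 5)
Your reduction via \cref{thm:compCrit} and your computation of the subgroup chain $(\wt U_r)_{r\in\ZZ}$ from the index function coincide with the paper's first steps, and your remark that the shift automorphism makes the choice of origin harmless is correct (the paper instead anchors the chain non-uniformly to the abstract isomorphism $\Gamma$, which also disposes of your worry about the alignment for $\ZZ\ltimes\QQ_p$). The gap is in the coset layer: your ``extension lemma'' is asserted rather than proved, and you yourself identify the cross-level coherence of the back-and-forth as the unresolved delicate step. That is exactly the difficulty the paper's argument is built to avoid. Its device is to note that the shift $S(D_{r,a})=D_{r+1,a}$ is definable in the meet groupoid by an existential formula with the subgroups as parameters ($C=S(B)$ iff some $D\in L(U_{r+1})$ satisfies $D\sub B$ and $D^p=C$), so the corresponding map $\wt S=\Gamma\circ S\circ\Gamma^{-1}$ on an arbitrary Haar computable copy is computable. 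One then only has to construct a computable isomorphism between the two copies of $C_{p^\infty}$ sitting at level $0$ --- a single standard effective choice of compatible elements of order $p^m$ --- and transport it to every other level by powers of $\wt S$. This eliminates the multi-level bookkeeping entirely. If you insist on the direct back-and-forth you must actually prove that a finite partial isomorphism of finitely generated sub-meet-groupoids extends and that a witness is effectively recognizable; this is true (finitely generated substructures are finite at each level, $C_{p^\infty}$ is divisible, and orders are computable from the groupoid operations), but it is the substance of the proof and your proposal defers it.

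For $\ZZ\ltimes\QQ_p$ the sketch is too thin at precisely the point you flag. Recovering $z$ is easy ($g^zD_{r,a}\colon U_{r-z}\to U_r$, so $z$ is the level difference between $A\cdot A^{-1}$ and $A^{-1}\cdot A$), but ``matching it directly'' is not: one must choose in the target copy a nested family of transversal cosets $F_r=\wt E_{-1,r,0}\colon \wt U_{r+1}\to\wt U_r$ with $F_{r+1}\sub F_r$, generate all $\wt E_{z,r,0}$ from these by products and inverses, and then verify surjectivity using the quasi-commutation relation $\wt D_{r+1,a}\cdot F=F\cdot\wt D_{r,a}$. This is the content of the paper's phases (a)--(e) in the second half of its proof, and it is where the noncommutativity genuinely enters; naming the issue does not resolve it.
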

\begin{proof}   
  In  \cref{ex:Qp} we obtained  a   Haar computable copy  $\+ W$ of  the meet groupoid $\+ W(\QQ_p)$. Recall that the elements of $\+ W$ are given as cosets  $D_{r,a}=\pi_r^{-1}(a)$ where $r\in \ZZ$, $\pi_r \colon \ZZ_p \to C_{p^\infty}$ is the canonical projection with kernel  $U_r =p^{r} \ZZ_p$,  and $a \in C_{p^\infty}$.

By the criterion above, it  suffices to show that any Haar computable copy $\wt {\+ W}$ of $\+ W(\QQ_p)$ is computably isomorphic to $\+ W$. 
  By hypothesis on $\wt {\+ W}$  there is an isomorphism $\Gamma \colon \+ W \to \wt {\+ W}$. Let $\wt U_r = \Gamma(U_r)$ for $r\in \ZZ$.  We will construct a \emph{computable} isomorphism $\Delta\colon \+ W \to \wt {\+ W}$  which agrees with $\Gamma $ on the set   $\{U_r\colon \, r\in \ZZ\}$. 
First we show that from $r$ one can   compute the subgroup $\wt U_r \in \wt {\+ W}$. 

\bi \item[(a)] If $\wt U_r$ has been determined, $r\ge 0$, compute  $\wt U_{r+1}$ by searching for  the unique subgroup in $\wt {\+ W}$   that has index $p$ in $\wt U_r$.

\item[(b)] 
If $\wt U_r$ has been determined, $r\le 0$, compute  $\wt U_{r-1}$ by searching for  the unique subgroup in $\wt {\+ W}$ such that $\wt U_r$ has index $p$ in it.  \ei

The shift homeomorphism  $S\colon \QQ_p \to \QQ_p$ is defined by $S(x)= p x$.  Note that $B \to S(B)$ is   an automorphism of the meet groupoid $\+ W$.  Using  the notation of~\cref{ex:Qp} (recalled above), for each $\aaa \in \QQ_p, r \in \ZZ$,  one has $\pi_{r+1}(S(\aaa))=\pi_r(\aaa)$, and hence  for each $a \in C_{p^\infty}$, \begin{equation} \label{eqn:SSS} S(D_{r,a})= D_{r+1, a}. \end{equation}   
We show that $S$  is definable    within $\+ W$ by an existential formula using    subgroups~$U_r$ as parameters. Recall that given a meet groupoid $\+ W$, by $L(U)$ we denote  the set of left cosets of a subgroup $U$. For $D \in L(U_r)$ we write   $D^k$ for $D \cdot \ldots \cdot D$ (with $k$~factors), noting that  this is defined,  and in $L(U_r)$.%
\begin{claim} Let $B \in L(U_r)$ and $ C\in L(U_{r+1})$. Then
 \bc $C = S(B) \LR  \ex D \in L(U_{r+1})\, [D \sub B \lland D^p = C]$. \ec \end{claim}
\n  \lapf If $x \in C$ then $x = py$ for some $y \in B$, so $x \in S(B)$.    So $C \sub S(B)$  and hence $C= S(B)$ given that $S(B)\in L(U_{r+1})$. 
 
\rapf Let $x\in C$, so $x = S(y)$ for some $y \in B$. Let $y \in D$ where $D \in L(U_{r+1})$. Then $D\sub B$. Since $D^p \cap C \neq \ES$,   these two (left) cosets of $U_{r+1}$ coincide.  This shows the claim.

  We  use this to show that the function   $\wt S=\Gamma \circ S \circ \Gamma^{-1} $ defined on $\wt {\+ W}$ is computable. Since $\Gamma( U_r)= \wt U_r$,  ($r \in \ZZ$),   $\wt S $ satisfies   the claim when replacing the $U_r$ by the $\wt U_r$.  Since the meet groupoid $\wt {\+ W}$ is computable,  given $B \in \wt {\+ W}$,     one can search $\wt {\+ W}$ for a witness $D\in L(\wt U_{r+1})$ as on the right hand side, and then output $C= \wt S(B)$. So the function  $\wt S$ is computable.

We build the computable isomorphism $\Delta \colon \+ W \to \wt {\+ W}$ in four phases.    The first three phases build a computable   isomorphism $L( U_0) \to L(\wt U_0)$, where $L(\wt U_0) \sub \wt {\+ W}$ denotes the group    of left cosets of $\wt U_0$. (This group is isomorphic to $C_{p^\infty}$, so this amounts to defining a computable isomorphism between two computable copies of $C_{p^\infty}$.)  The last phase extends this isomorphism to all of $ \+  W$, using that $\wt S$ is an automorphism of $\wt {\+ W}$. 

%

For $q \in \ZZ[1/p] $ we write  $[q]= \ZZ + q \in C_{p^\infty}$.   We     define $\wt D_{r,[q]}= \Delta(D_{r,a})$ for $r \in \ZZ, q \in \ZZ[1/p] $

\bi \item[(a)] Let $\wt D_{0, [p^{-1}]}$ be an element of order $p$ in $L(\wt U_0) $. 
\item[(b)] Recursively,  for  $m>0$ let  $\wt D_{0, [p^{-m}]}$ be an element of order $p^m$ in $L(\wt U_0) $ such that $(\wt D_{0, [p^{-m}]})^p = \wt D_{0, [p^{-m+1}]}$.   
 \item[(c)]  For $a = [kp^{-m}]$ where $0 \le k <p^m$ and $p  $ does not divide $k$,  let $\wt D_{0, a} = (\wt D_{0, [p^{-m}]})^k$.
\item[(d)] For $r \in \ZZ -\{0\}$ let  $\wt D_{r,a} =\wt S^r (\wt D_{0,a})$.\ei 
One can easily verify that $\Delta \colon \+ W \to \wt {\+ W}$ is  computable  and preserves the   meet groupoid operations.   To  verify that $\Delta$ is onto, let  $B \in \wt {\+ W}$. We have $B \in L(\wt U_r)$ for some $r$.   There is a least $m$ such that $B= (\wt D_{r, [p^{-m}]})^k$ for some $k< p^m$.  Then  $p$ does not divide $k$, so $B= \wt D_{r, [kp^{-m}]}$. 

 \medskip
 
 \n   \emph{We next   treat   the   case of $G=\ZZ \ltimes \QQ_p$.} Let $\+ V$ be the Haar computable copy of~$\+ W(G)$ obtained in \cref{ex:Qp}, and let $\wt {\+ V}$ be a further  Haar computable copy of~$\+ W(G)$. Using the notation of \cref{ex:Qp}, let 
 \bc $E_{z,r,a}= g^zD_{r,a}$ for each $z, r \in \ZZ, a \in C_{p^\infty}$.   \ec We list some properties of these elements of $\+ V$ that will  be needed shortly. Note that we can view $\+ W$ as embedded into $\+ V$ by identifying $\la r,a\ra$ with $\la 0,r,a\ra$. 
Also note   that $E_{z,r,a} \colon U_{r-z} \to U_r$ (using the category notation of  \cref{rem:cat}). Since $D_{r+1, a} \sub D_{r,pa}$, we have 
   \begin{equation} \label{eqn:EEE} E_{z, r+1, a } \sub E_{z,r,pa}. \end{equation} 
  Furthermore,   \begin{equation} \label{eqn:EEE2} E_{z, r, 0 }= g^{z} U_r= U_{r+z} g^{z}= (g^{-z} U_{r-z})^{-1}= (E_{-z, r-z, 0 })^{-1}. \end{equation}   
    
 By hypothesis on $\wt {\+ V}$, there is a  meet groupoid  isomorphism $\ol \Gamma \colon \+ V \to \wt {\+ V}$.     Since $G$ has no compact open subgroups besides the ones present in $\+ W(\QQ_p)$,   the  family  $(\wt U_r)_{r \in \ZZ}$, where  $\wt U_r = \ol \Gamma(U_r)$,      is   computable in $\wt {\+ V}$  by the same argument as before.   The set of elements  $A$  of $\wt {\+ V}$ that are a  left and a right coset of the same  subgroup is computable by checking whether $A^{-1} \cdot A= A \cdot A^{-1}$. The operations of $\wt {\+ V}$   induce a Haar computable meet groupoid   $\wt {\+ W}$ on this set.  Clearly  the restricted  map    $\Gamma =\ol \Gamma \mid  \+ W$  is an isomorphism $\+ W \to \wt {\+ W}$. So by the case of $\QQ_p$, there is a computable isomorphism $\Delta \colon  \+ W \to \wt {\+ W}$.  
%
 
  We will extend $\Delta$ to a computable  isomorphism $\ol \Delta \colon \+ V \to \wt {\+ V}$.   The following summarizes the setting: 
\[\xymatrix{
   \+ V   \ar[r]^{\ol \Gamma , \ol \Delta}& \wt{\+ V}  \\
    \+ W \ar[u]^{\sub}      \ar[r]_{\Gamma, \Delta}                     & \wt { \+ W} \ar[u]_{\sub} } \]%
In five phases we define a computable family $\wt E_{z,r,a}$ ($z, r \in \ZZ, a \in C_{p^\infty}$), and then let $\ol \Delta(E_{z,r,a})= \wt E_{z,r,a}$.     As before write $\wt D_{r,a} = \Delta( D_{r,a})$.

  \bi \item[(a)] Let $\wt E_{0,r,a} = \wt D_{r,a}$.
 Choose   $F_0:= \wt E_{-1,0,0} \colon \wt U_1 \to \wt U_0$
  \item[(b)] compute $F_r:=\wt E_{-1,r,0}\colon U_{r+1} \to U_r$ by recursion on $|r|$, where $r \in \ZZ$, in such a way  that $\wt F_{r+1} \sub \wt F_r$ for each $r \in \ZZ$; this is possible by (\ref{eqn:EEE}) and since $\+ V \cong \wt {\+ V}$ via $\ol \Gamma$. 
  \item[(c)]
   For $z < -1$, compute  $ \wt E_{z,r,0}\colon U_{r-z} \to U_r$ as follows: 
   \bc $\wt E_{z,r,0}= F_{r-z-1} \cdot   F_{r-z-2} \cdot \ldots \cdot   F_r$. \ec
  \item[(d)] For $z>0$ let $\wt E_{z,r,0}= (\wt E_{-z, r-z, 0 })^{-1}$; this is enforced by (\ref{eqn:EEE2}). 
  \item[(e)] Let $\wt E_{z,r,a}= \wt E_{z,r,0} \cdot \wt D_{r,a}$. 
  \ei   
One verifies that $\ol \Delta$  preserves the   meet groupoid operations (we omit the formal detail).  To show that $\ol \Delta$ is onto, suppose that $\wt E \in \+ V$ is given. Then $\wt E = \Gamma(E_{z,r,a})$ for some $z,r,a$. By (\ref{eqn:EEE}) we may assume that $z<0$.  Then   $  E_{z,r,0}=   \prod_{i=1}^{-z}E_{-1,r-z-i,0} $ as above. So, writing $F_s$ for $\wt E_{-1,s,0}$, we have  $\Gamma(E_{z,r,0})= \prod_{i=1}^{-z}F_{r-z-i} \wt D_{r-z-i, a_i}$ for some $a_i \in C_{p^\infty}$. 

Note  that  $\wt S(D)= F \cdot D \cdot F^{-1}$ for each $D \in L(\wt U_r)  \cap \wt {\+ W}$ and $F \colon \wt U_{r+1} \to  \wt U_r$. 
For, the analogous statement clearly holds in $\+ V$;  then one uses  that  $\wt S=\Gamma \circ S \circ \Gamma^{-1}  $, and that $\ol \Gamma \colon \+ V \to \wt {\+ V}$ is an isomorphism.   
Since $\wt D_{r+1,a}= \wt S(\wt D_{r,a})$, we may conclude that  $\wt D_{r+1, a}\cdot F = F \cdot D_{r,a}$ for each such $F$. We can use these ``quasi-commutation relations"  to simplify the expression $\prod_{i=1}^{-z}F_{r-z-i} \wt D_{r-z-i, a_i}$ to  $\wt E_{z,r,0} \wt D_{r,b}$ for some $b \in C_{p^\infty}$. Hence $\wt E = \wt E_{z,r,0} \wt D_{r,b} \wt D_{r,a}$. This  shows that $\wt E$ is in the range of $\ol \Delta$, as required.
\end{proof}

 \section*[Appendix 1: A noncomputable scale function]{Appendix 1 (with George Willis):  a non-computable scale function}
  \label{s:scale noncomp}
  \setcounter{section}{11}
  	Let $\mathbb F$ be some finite field; for notational simplicity we will assume that  $|\mathbb F|=2$.  We write $H$ for the additive group  of $\FF((t))$. The group 	 $\Aut(H)$ is equipped with the usual Braconnier topology recalled before \cref{prop: Braconnier}.    For $\pi \in \Aut(H)$ let  $s(\pi) = \min_V \{|\pi(V) \colon V \cap \pi(V)| $ where as usual $V$ ranges over compact open subgroups.  Willis \cite[Example  1]{Willis:17} showed that  $s \colon \Aut(H) \to \NN^+$   is not   continuous. He defined    automorphisms $\pi_t$  $(t \in \NN)$ and $\pi$ of $H  $ such that $\lim_t \pi_t = \pi$, $s(\pi_t)= 1$ and $s(\pi)=2$.   
  Starting from  his example we  show the following.

 \begin{theorem}[with George Willis] \label{th:scale noncomp}
 	There is a computable Baire presentation $(T, \Op, \Inv)$ of a t.d.l.c.\ group  $G$ and a computable sequence of paths $(\ol g_i)_{i \in \NN^+}$ on~$T$ such that the function $i \mapsto s(\ol g_i)$ is non-computable. Moreover,  $G$ is elementary in the sense of \cite{Wesolek:15}.
 \end{theorem}
 \begin{proof}
 	We begin with a brief outline. Let $\+ K\sub \NN $ denote the usual halting problem,  a standard recursively enumerable, undecidable set in computability theory.   Uniformly in $i\in \NN^+$ we  will   build a computably t.d.l.c.\ group $G_i$ with a  computable element $g_i$ so that  $s(g_i)= 2$ if $i \not \in \+ K$, and $s(g_i)=1$ else. Along with $G_i$,       uniformly in $i$ we  determine a compact open subgroup $U_i$. Then the  statement of the theorem holds for the    t.d.l.c.\ group  $G=\bigoplus \sNp i  (G_i, U_i) $,  with the computable  presentation according to     the proof of \cref{prop:direct products}, where $\ol g_i$ is the image of $g_i$ under the canonical embedding $G_i \to G$.

 	We let $G_i $ be the split extension $\ZZ \ltimes H$ for the action of $\ZZ$ corresponding to an automorphism $\hat \beta_i$ of $H$.
 To define $\hat \beta_i$ 	we   combine the technique of Willis with an argument  from computability theory.  Informally speaking,  if  a number $i$ enters    $\+ K$  at a stage~$t$,    the approximation of   $\hat  \beta_i$   changes  from ``following" $\pi$ to ``following" $\pi_t$.  We have $s_G(\ol g_i)= s_H(\hat  \beta_i)$ where  $g_i \in G_i$ is the element such that conjugation by $g_i $ induces the automorphism  $\ol \beta_i$ on $H$.

 	We proceed to the details. We  use the computable Baire presentation $(Q, \Op, \Inv)$ of $H$ given by  the proof of \cref{prop: SL2}.  Recall that   strings on $Q$ other than the root are of the form $r\sss$  where $r\in \NN$,  $\sss \in \{0,1\}^*$ is a binary string,  and
 	if  $r>0$ then~$\sss$ does not start with $0$.  We  think of $r\sss$ as denoting the formal Laurent polynomial $x^{-r} \sum_{0 \le k< \sssl} \sss(k) x^k$; note that $\sss$ is allowed to end in $0$.

 	For  $c\in \NN$, we say that a permutation $\alpha$ of $\ZZ$ is  \emph{$c$-bounded}   if $|\alpha(z)-z| \le c$ for each $z \in \ZZ$. In this case  the function $\hat \alpha$ defined on $H$  by \[\hat \alpha(\sum_{k \in \ZZ}  r_kx^k)= \sum_{k \in \ZZ}  r_{\alpha(k)}x^{\alpha(k)}\] 
 	is a continuous  automorphism of $H$. 
 	\begin{claim} \label{cl:otti}
 		Let $\alpha$ be a computable $c$-bounded  permutation  of $\ZZ$. Then 	$\hat \alpha \colon [Q] \to [Q]$ is computable,  uniformly in $c$ and a Turing machine index for computing $\alpha$.
 	\end{claim}
 	We verify this by defining a  monotonic computable function $L$ on the tree $Q$ according to   \cref{rem:monot}.  For $r\in \NN$ and a binary string $\sss$ such that   $\sssl \ge 2c+1$, we declare that  $L(r\sss)= s\tau$   where
 	\bi \item[(a)]  $-s$ is the minimum of $0$ and the values $\alpha(k-r)$, $k \le 2c+1$  such that $\sss(k) \neq 0$; if there is no such value (and hence  $r=0$)   let $s=0$;

 	\item[(b)]  $\tau$ is obtained by  searching for  the maximal  $\ell = |\tau| $    such that   for each $k< \ell$ the value \bc $\tau(k)= \sss(\alpha^{-1}(k-s)+r)$ \ec is defined. 
 	\ei
 	If $s>0$ then $\tau(0)= \sss(k)= 1$ where $k-r$ is the position at which  the minimal value is taken in (a), so indeed $s\tau\in Q$. Clearly $L$ is monotonic. Since $\aaa$ is c-bounded, given $r \in \NN$ and  $\sss$ with  $\sssl \ge 2c+1$,   we have $L(r\sss)= s\tau$ for some  $s\in \NN$ and $\tau$ such that    $|\tau| \ge \sssl -c$. Using this,  one  verifies that $L$ is as required. This shows the claim.


 	Since $\+ K$ is recursively enumerable,  $\+ K = \bigcup_t \+ K_t$ for a computable sequence  of strong indices for finite subsets of $\NN$. For technical reasons we  will assume that $\+ K_t- \+ K_{t-1} \neq \ES$  implies that  $t $ is even. We also let  $\+ K_t = \ES $  for  $t<0$. 
 	The following defines a computable function $\NN^+ \times \ZZ \to \ZZ$ via $\la i, t \ra \mapsto \beta_i(t)$:
 	\[ \beta_i(t)= \begin{cases}
 		t+2 &\text{if $t $ is even and $i \not \in \+ K_t$}\\
 		t-2  &\text{if $t $ is odd and $i \not \in \+ K_t$} \\
 		t+1&\text{if $t $ is even and $i   \in \+ K_t - \+ K_{t-1}$} \\
 		t &\text{if $i  \in \+ K_{t-1}$} 
 	\end{cases}\]
 	If $i \not \in \+ K$ then $\beta_i$ is the permutation of $\ZZ$ that adds 2 to even numbers, and subtracts 2 from odd numbers. So $s(\hat \beta_i)=2$. If   $t$ is least such that $i \in \+ K_t$ then the nontrivial cycle of  $\beta_i$ ``turns around" at position $t$.  Hence $\hat \beta_i$ leaves invariant the compact open subgroup consisting of the  Laurent series that begin at position $t+2$. So   $s(\hat \beta_i)=1$. 
 	
 	Now let $G_i = \ZZ \ltimes_{\gamma_i} H$, where $\gamma_i$ is the action $\ZZ \times H \to H$ given by $\gamma_i(z,h) = \ol \beta_i^{z}(h)$. Let $\ZZ$ have a computable Baire presentation according to \cref{compBaire discrete}. By   \cref{cl:otti}, $\gamma_i$   is computable. So  the proof of  \cref{prop:sdprod}  yields a  computable Baire presentation of $G_i$. Note that $U= \mathbb F[[t]]$ is a  compact open subgroup of each $G_i$. All the    assertions in this paragraph  hold uniformly in $i$.  
 	
 	Let $g_i$ be the generator of $\ZZ$ in $G_i$ whose conjugation action on $H$ induces  $\beta_i$.  Then $s_{G_i}(g_i) = s_H(\hat  \beta_i)$ because $G_i$ and $H$ have the same compact open subgroups.  
 	
 	Let $G= (T, \Op, \Inv) $ be the computable Baire presentation of the local direct product $G=\bigoplus \sNp i  (G_i, U) $ obtained in the proof of  \cref{prop:direct products}. It is clear that the sequence $(\ol g_i)$ of  paths of $T$ such that $\ol g_i$ represents $g_i$ viewed as an element of $G$ is uniformly computable.   It is easy to check that $s_G(\ol g_i) = s_{G_i}(g_i)$ for each $i$. (The canonical projection $G \to G_i$ is open, and   to each   compact open subgroup $V_i$ of $G_i$ corresponds   a compact open subgroup $W$ of $G$ where $W = \prod_{k \in \NN^+} V_k$ with $V_k= U$ for $k \neq i$.)  Thus $s_G(\ol g_i ) = 1 $  iff $i \in \+ K$, as required.
 \end{proof}

\section*{Appendix 2:  Abelian t.d.l.c.\   groups} \label{s: Lupini} 
\setcounter{section}{12}
Our work~\cite{Lupini.etal:21}, joint  with Lupini,    studies locally compact  \emph{abelian} groups with computable presentations. In  particular, we investigate the algorithmic content of Pontryagin -- van Kampen duality for such groups. Each abelian t.d.l.c.\ group is procountable, i.e., an   inverse limit of countable groups.  This enables us in~\cite{Lupini.etal:21} to   use a  notion of computable presentation for  t.d.l.c.\ abelian groups convenient for the given context.   This notion, which in the present  paper we will call a \emph{computably procountable presentation with  effectively finite kernels}, is  reviewed in  \cref{def:procountable} below.  The main purpose of this short section is to show that    the  definition of computably t.d.l.c.\  abelian groups in~\cite{Lupini.etal:21} is equivalent to the one given here. 
\subsection{Procountable groups} We review some concepts, mainly from \cite[Section~3]{Lupini.etal:21}.
Suppose   we are given a sequence of  groups  $(\+ A_i)\sN i$   such that each $A_i$ is   countable discrete. Suppose we are also given  epimorphisms   $\phi_i: \+ A_{i} \rightarrow \+ A_{i-1} $    for  each $i>0$.  Then $\varprojlim (\+ A_{i},\phi_i)$ can  be concretely defined as the closed subgroup of the topological group $\prod_{i\in \NN} A_i$ consisting of those $g$ such that $\phi_i(g(i))= g(i-1)$ for each $i>0$.
\begin{definition} \label{df:proc} A~topological group $G$ is called \emph{procountable} if   $G\cong \varprojlim (\+ A_{i},\phi_i)$ for some sequence $(\+ A_i,\phi_i)\sN i$ as above. 
\end{definition} 

The t.d.l.c.\ groups  that are pro-countable are precisely the SIN groups (where SIN stands for ``small invariant neighbourhoods); see \cite[Section~2.2]{Wesolek:15}.
\begin{remark} \label{rem: wn} Let  $G$ be a closed subgroup of $\S$,  let $\seq{N_i}\sN i$ be  a descending sequence of open normal subgroups of $G$ with trivial intersection,  and let  $\+ A_i= G/N_i$. Then  $G \cong \varprojlim_{i>0} (\+ A_{i},\phi_i)$ where the $\phi_i$ are the canonical maps. This is well-known; see  \cite[Lemma 2]{Malicki:14}. \end{remark}

	\subsection{Computably procountable groups} 
	Extending \cref{def:profinite}  of computable profinite presentations,   (2) below     allows the $\+ A_i$ to be discrete computable groups, while  retaining the condition that the kernels of the connecting maps be finite and given by strong indices.
	\begin{definition}[\cite{Lupini.etal:21}, Definition\ 3.4] \label{def:procountable}  \label{def:procountable_t.d.l.c.} \mbox{} \bi \item[(1)] A computable  presentation of a procountable group $G$ is a sequence $(\+ A_{i},\phi_i)_{ i \in \NN}$ of discrete groups $\+ A_i$ and epimorphisms $\phi_i: \+ A_{i} \rightarrow \+ A_{i-1} $ (for $i>0$) such that $G\cong \varprojlim (\+ A_{i},\phi_i)$,    each group $\+ A_i $  is uniformly computable as a discrete group, and the sequence of maps $(\phi_i)_{ i \in \NN^+}$ is uniformly computable. 
		
		\item[(2)] Suppose that 
		$\mathit  {ker\ } \phi_i$ is finite for each $i$, so that $G$ is locally compact by \cite[Fact~3.2]{Lupini.etal:21}. We say that   $(\+ A_{i},\phi_i)_{ i \in \NN}$ is a \emph{computably procountable presentation with  effectively finite kernels} if  in addition,   from $i$ one can compute a strong index for $\mathit  {ker\ } \phi_i$ as a subset of $A_i$. \ei\end{definition}

	\begin{fact} If $G$ is compact and has a computable procountable presentation with  effectively finite kernels, then this presentation is  a computable profinite presentation. \end{fact}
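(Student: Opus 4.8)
The plan is to argue that compactness of $G$ forces every level group $\+ A_i$ to be \emph{finite}, and that strong indices for these finite groups can be listed uniformly in $i$ from the given data; the connecting maps $\phi_i$ are then automatically finite surjective homomorphisms, so the sequence $(\+ A_i,\phi_i)$ satisfies \cref{def:profinite} verbatim. Throughout I would identify $G$ with $\varprojlim(\+ A_i,\phi_i)$, so that $G$ compact means this inverse limit is compact.

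First I would establish finiteness of the levels. Since each $\phi_i$ is an epimorphism and the system is indexed by $\NN$, the canonical projection $\pi_i\colon G\to\+ A_i$ is surjective: given $a\in\+ A_i$ one pushes $a$ down along $\phi_i,\dots,\phi_1$ and lifts it up through $\phi_{i+1},\phi_{i+2},\dots$, using surjectivity at each stage, to produce a compatible thread hitting $a$. As $\pi_i$ is continuous and $G$ is compact, the image $\pi_i(G)=\+ A_i$ is a compact subset of a discrete space, hence finite. In particular each $\phi_i$ is now a surjection between finite groups.

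The effective heart of the proof is to produce a \emph{uniformly} computable sequence of strong indices for the $\+ A_i$. Here I would exploit the effectively finite kernels from \cref{def:procountable}(2). Since $\phi_i$ is onto with kernel $\mathit{ker}\,\phi_i$, one has $|\+ A_i|=|\+ A_{i-1}|\cdot|\mathit{ker}\,\phi_i|$, and therefore
\[ |\+ A_i| = |\+ A_0|\cdot\textstyle\prod_{j=1}^{i}|\mathit{ker}\,\phi_j|. \]
Each $|\mathit{ker}\,\phi_j|$ is computable from $j$, as we are handed a strong index for $\mathit{ker}\,\phi_j$; hence $i\mapsto|\+ A_i|$ is a computable function. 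Knowing $|\+ A_i|$, and that the domain of $\+ A_i$ is a decidable subset of $\NN$ uniformly in $i$, I would simply enumerate $0,1,2,\dots$, testing membership, until exactly $|\+ A_i|$ elements have appeared, and output their strong index. Combined with the uniformly computable operations of $\+ A_i$ and the maps $\phi_i$ read off the given data, this is precisely a computable profinite presentation.

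The subtlety worth flagging---and what superficially looks like the obstacle---is the constant $|\+ A_0|$ in the displayed formula. One cannot in general compute the cardinality of a decidable finite set from a mere decision procedure for it, so $|\+ A_0|$ might seem inaccessible. The point is that \cref{def:profinite} only requires the strong-index sequence to be computable for the \emph{fixed} group $G$ and its fixed presentation, where $|\+ A_0|$ is just a particular natural number that may be hard-wired into the algorithm; no uniformity over all presentations is demanded. Thus compactness is used purely as the (true) hypothesis guaranteeing finiteness of the levels, not as effective data, and the construction above is genuinely computable.
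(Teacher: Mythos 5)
Your proof is correct and follows essentially the same route as the paper's: compactness forces each level $\+ A_i$ to be finite, and the sizes are computed by the recursion $|\+ A_i| = |\mathit{ker}\,\phi_i|\cdot|\+ A_{i-1}|$ using the effectively given kernels, after which strong indices are read off by enumerating the decidable domains. Your explicit remark that $|\+ A_0|$ is a single hard-wired constant (so no uniformity issue arises at the base case) fills in a detail the paper leaves implicit.
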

	%
	
	\begin{proof}    Since  $G$ is compact,   the $\+ A_i$ are finite. All we need is strong indices for the  $\+ A_i$ as groups. Since the $\+ A_i$ are  computable groups uniformly in $i$,    it suffices to compute the size   $|\+ A_i|$ uniformly in $i$. One  can do this recursively,  using that $|\+ A_i| = |\mathit {ker}\,  \phi_i | \times |\+ A_{i-1}|$. \end{proof}

\begin{prop}\label{prop:proctotdlc}  If  a t.d.l.c.\ group $G$ has a  procountable  presentation   $(\+ A_{i},\phi_i)_{ i \in \NN}$ with  effectively finite kernels, then $G$ has a computable Baire presentation.
\end{prop}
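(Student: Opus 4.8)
The plan is to read the concrete inverse-limit description of $G$ directly as a tree of coherent finite sequences. I would set
\[
T = \{ \sigma \in \NN^* \colon \sigma(i) \in A_i \text{ for } i < |\sigma|, \ \text{and } \phi_i(\sigma(i)) = \sigma(i-1) \text{ for } 0 < i < |\sigma| \},
\]
so that the paths $[T]$ are exactly the coherent sequences comprising $\varprojlim(\+ A_i,\phi_i) = G$, and the path-space topology coincides with the inverse-limit topology. Since the $\+ A_i$ and the $\phi_i$ are uniformly computable, membership in $T$ is decidable, and since each $\phi_i$ is onto, $T$ has no leaves. The group operations are then coordinatewise: I would define the witnessing functions $P_{\Op}$ and $P_{\Inv}$ of \cref{def:Comp fcn on tree} on (pairs of equal-length) strings by multiplying, respectively inverting, in each $\+ A_i$ separately. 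These are monotone and partial computable, and their outputs again lie in $T$ because each $\phi_i$ is a homomorphism, so $\Op$ and $\Inv$ are computable and $[T]\cong G$ as topological groups.

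The core of the argument is to verify that $T$ is computably locally compact in the sense of \cref{def:comploccompact}. The key structural point is that \emph{only the root} can branch infinitely: for a node ending in $a_{i-1}$, its children correspond to the preimage $\phi_i^{-1}(a_{i-1})$, which is a coset of the finite normal subgroup $\ker\phi_i$ and so has exactly $|\ker\phi_i|$ elements. Hence $[\sigma]_T$ is compact for every nonempty $\sigma$, which gives local compactness (Condition~(1)). For Condition~(2), the set $\{\sigma \colon [\sigma]_T \text{ compact}\}$ is either all of $T$ (when $[T]$ is compact, equivalently when $A_0$, and hence every $\+ A_i$, is finite) or else $T\setminus\{\langle\rangle\}$; each of these is a computable set, which suffices for the existence claim. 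Alternatively, in the compact case every $\+ A_i$ is finite and $G$ is profinite, so one may instead route through the Fact identifying the presentation as a computable profinite presentation, together with \cref{ex:prof Baire}.

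For Condition~(3), the required uniform bound $H(\sigma,i)$ on the entries of extensions of a compact-cone string $\sigma$ is obtained by propagating the coset computation upward: given $a_{i-1}$, I would search $\+ A_i$ for one $b$ with $\phi_i(b)=a_{i-1}$ — this halts by surjectivity — and then generate the entire coset $b\cdot\ker\phi_i$ using the strong index for $\ker\phi_i$ furnished by the effectively-finite-kernel hypothesis, recording its maximal entry. Iterating from level $|\sigma|$ upward bounds $\rho(i)$ over all extensions $\rho$ and all $i$, computably. I expect this step to be the main obstacle, and it is precisely where the hypothesis of \emph{effectively} finite kernels (strong indices, not mere finiteness) is essential: a bound on the \emph{number} of children is useless without a computable bound on the actual entries, and extracting the latter requires both surjectivity of $\phi_i$ and explicit access to $\ker\phi_i$. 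Everything else — decidability of $T$, computability of the coordinatewise operations, and the identification $[T]\cong G$ — is routine.
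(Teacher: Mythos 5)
Your proposal is correct and follows essentially the same route as the paper: the same coherent-sequence tree $T$, coordinatewise operations, and the observation that $[\sigma]_T$ is compact for every nonempty $\sigma$. The paper dismisses the verification of effective local compactness as "clear from the hypotheses," whereas you usefully spell out the one nontrivial point — that the computable branching bound $H$ comes from listing the coset $b\cdot\ker\phi_i$ via the strong index for the kernel after finding one preimage by surjectivity — which is exactly where the effectively-finite-kernels hypothesis enters.
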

\begin{proof} This is a straightforward extension of the argument in the profinite setting,  ~\cref {ex:prof Baire}. For simplicity we may assume that $\+ A_0$ is infinite.  So we can effectively identify the elements of $\+ A_i$ with $\NN$, and hence view $\phi_i $ as a map $\NN \to \NN$. As before,    
\bc $T = \{ \sss \in \NN^* \colon \forall i< |\sss| [\sss(i) \lland  \phi_i(\sss(i)) = \sss(i-1) \text{ if } i>0 \}$. \ec 
It is clear  from the hypotheses that  $[\sss]_T$ is compact iff $\sss$ is a nonempty string,  and  hence that  $T$ is computably  locally compact via $k=1$. The rest is as before, using that the $\+ A_i$ are computable groups uniformly in $i$. \end{proof}

For abelian, as well as for  compact, t.d.l.c.\  groups, we can provide  a converse to the foregoing observation. The compact case  essentially  restates   \cite[Theorem~1]{Smith:81} for  the recursively profinite case.

\begin{prop}\label{lem:proc}
Let $G$ be an infinite  computably t.d.l.c.~group that is abelian or compact. 
Then $G$ has a computably procountable presentation with  effectively finite kernels. 
\end{prop}

\begin{proof}  
Suppose that the meet groupoid $\+ W(G)$ has a Haar  computable copy $\+ W$ as in Definition~\ref{Def2}. We may assume that its domain is all of $\NN$, and that   $0$ denotes a (compact open) subgroup. In the framework of that  copy one can compute a descending sequence $\seq{U_i}\sN i$ of compact open   subgroups of $G$, such that  $U_0$ is the group denoted by $0$ and,   for each  compact open $U$,  there is an $i$ with $U_i \sub U$.  In the abelian case, trivially each  $U_i$ is normal;   in the case that $G$ is compact, we effectively shrink the $U_i$  so that  they are also normal.  To do so, by  the hypothesis  that $\+ W$ is Haar computable,  we can compute from $i$ a strong index for the set   $\{B_1, \ldots, B_k\}$ of  distinct right  cosets  of $U_i$; now replace  $U_i$ by $\bigcap_{r=1}^k B_r^{-1} \cdot U_i \cdot B_r$. 

Let $\+ A_i$ be the automorphism group of the object $U_i$ in the groupoid  $\+ W$ viewed as a category;  that is, $\+ A_i$ is the set of cosets of $U_i$,  with the groupoid operations restricted to it. For $i >0$ let $\phi_i \colon \+ A_{i} \to \+ A_{i-1}$ be the map sending $B \in \+ A_i$ to the unique coset of $U_{i-1}$ that contains $B$. Since   $\+ W$ is  Haar computable,   the condition in (1)  of  Definition~\ref{def:procountable} holds. Also, the kernel of $\phi_i$ is the set of cosets of $U_i$ contained in $U_{i-1}$. By Definition~\ref{Def2}(b) we can compute the number of such cosets, so we can compute a strong index for the kernel. Hence  the condition in  (2) of Definition~\ref{def:procountable}   also holds.

We complete the proof by verifying the following claim.
\begin{claim}  $G \cong   \varprojlim_{i>0} (\+ A_{i},\phi_i)$. \end{claim}
As in  \cref{def:Gof}, let    $\wt  G $ be the closed subgroup of $ \S$   of elements $p$   that  preserve the inclusion relation  on $\+ W$ in both directions,  and satisfy  $p(A) \cdot B = p(A\cdot B) $ whenever $A \cdot B$ is defined. Recall that $\wt G \cong G$. So  it suffices to show that $\wt G \cong   \varprojlim (\+ A_{i},\phi_i)$. 

Let $\+ N_i$ be the stabilizer of $U_i$, which is a  compact open subgroup of $\wt G$. Since each $p\in \wt G$ preserves the inclusion relation, we have $\+ N_i \sub \+ N_{i-1}$ for $i >0$. By the choice of the $U_i$, the intersection of the $\+ N_i$ is trivial. Each $\+ N_i$ is normal: if $p \in \+ N_i$ and $q \in \wt G$, let $B= q(U_i)$, a left,  and hence also right, coset of $U_i$. Then \bc $q^{-1}pq(U_i)= q^{-1}p (U_iB)=q^{-1}(U_iB) = q^{-1}(B)U_i= U_i$. \ec So $q^{-1}pq \in \+ N_i$. 

For each $i$, the map $\wt G \to \+ A_i$ sending $p$ to $p(U_i)$ induces a group isomorphism $\wt G / \+ N_i \to \+ A_i$. For each $i>0 $ the natural map  $\aaa_i \colon \wt G/\+ N_i \to \wt G/\+ N_{i-1}$ induced by the identity on $\wt G$ corresponds to   $\phi_i$ via these isomorphisms. Thus   $\varprojlim_{i>0} (\wt G/\+ N_{i},\aaa_i)\cong  \varprojlim_{i>0} (\+ A_{i},\phi_i)$. The claim now follows in view of Remark~\ref{rem: wn}.
\end{proof}

  \n {\bf Acknowledgements.}  	The authors would like to thank Stephan Tornier and George Willis for very helpful discussions and insights.}

 \n {\bf Funding.} AM was supported by Rutherford Discovery Fellowship RDF-VUW1902 of  the Royal Society Te Apārangi. AN was supported by the Royal
 Society Te Apārangi under the standard Marsden grant UOA-1931.

\def\cprime{$'$} \def\cprime{$'$} \def\cprime{$'$} \def\cprime{$'$}
  \def\cprime{$'$} \def\cprime{$'$}

\end{document}